\documentclass[reqno,12pt,a4paper]{amsart}
\usepackage{amssymb,amsmath,amstext,amsthm,amsfonts,amscd,bbold,wasysym}
\usepackage{euscript,a4wide}
\usepackage{graphicx}
\usepackage[shortlabels]{enumitem} 

\usepackage[colorlinks=true,backref]{hyperref}

\theoremstyle{plain}
\newtheorem{maintheorem}{Theorem}

\newtheorem{maincorollary}[maintheorem]{Corollary}

\newtheorem{theorem}{Theorem}[section]
\newtheorem{proposition}[theorem]{Proposition}

\newtheorem{lemma}[theorem]{Lemma}
\newtheorem{claim}[theorem]{Claim}
\newtheorem{remark}[theorem]{Remark}

\theoremstyle{definition}

\newtheorem{conjecture}{Conjecture}

\newcommand{\RR}{{\mathbb R}}

\newcommand{\DD}{{\mathbb D}}

\newcommand{\CC}{{\mathbb C}}
\newcommand{\ZZ}{{\mathbb Z}}

\newcommand{\sS}{{\mathbb S}}

\def \TT {{\mathbb T}}

\newcommand{\fX}{{\mathfrak{X}}}

\newcommand{\cC}{{\mathcal C}}
\newcommand{\cF}{{\mathcal F}}

\newcommand{\cO}{{\mathcal O}}

\newcommand{\F}{\EuScript{F}}

\newcommand{\vfi}{{\varphi}}
\renewcommand{\epsilon}{\varepsilon}

\newcommand{\qand}{\quad\text{and}\quad}
\newcommand{\wh}{\widehat}
\newcommand{\wt}{\widetilde}

\DeclareMathOperator{\diag}{diag}

\DeclareMathOperator{\spec}{sp}

\DeclareMathOperator{\supp}{supp}

\DeclareMathOperator{\sing}{Sing}

\DeclareMathOperator{\m}{Leb}
\DeclareMathOperator{\close}{Closure}

\DeclareMathOperator{\diver}{div}

\def \Leb {\operatorname{Leb}}
\def \leb {\operatorname{Leb}}

  \def \vfi {\varphi}


\title[Physical measures for high co-dimensional ASH]{Physical measures
  for asymptotically sectional expanding flows in higher co-dimensions}

\date{\today}

\thanks{V.A. was partially supported by CNPq-Brazil (grant
  304047/2023-6). L.S. was partially supported by FAPERJ-Funda\c c\~ao
  Carlos Chagas Filho de Amparo \`a Pesquisa do Estado do Rio de
  Janeiro Projects APQ1-E-26/211.690/2021 SEI-260003/015270/2021 and
  JCNE-E-26/200.271/2023 SEI-260003/000640/2023, by Coordena\c c\~ao
  de Aperfei\c coamento de Pessoal de N\'ivel Superior CAPES -- Finance
  Code 001 and PROEXT-PG project Dynamic Women - Din\^amicas,
  CNPq-Brazil (grant Projeto Universal 404943/2023-3).}

\author{Vitor Araujo and Luciana Salgado}

\address[V.A.]{Instituto de Matem\'atica e Estat\'{\i}stica,
  Universidade Federal da Bahia, Av. Milton Santos s/n,
  40170-110 Salvador, Brazil.}
\email{vitor.araujo.im.ufba@gmail.com, vitor.d.araujo@ufba.br}
\urladdr{https:///sites.google.com/view/vitor-araujo-ime-ufba}

\address[L.S.]{Universidade Federal do Rio de Janeiro, Instituto de
   Matem\'atica\\
   Avenida Athos da Silveira Ramos 149 Cidade Universit\'aria, P.O. Box 68530, 
   21941-909 Rio de Janeiro-RJ-Brazil }
 \email{lsalgado@im.ufjr.br, lucianasalgado@ufrj.br}
 \urladdr{http://www.im.ufrj.br/~lsalgado}
 

 \keywords{physical/SRB measure, positive sectional Lyapunov exponent,
   asymptotically $p$-sectional expanding flow}

\subjclass[2010]{Primary: 37D45. Secondary: 37D30, 37D25, 37D35.}

\begin{document}

\begin{abstract}
  We obtain sufficient conditions for the existence of physical/SRB
  measures for asymptotically sectionally hyperbolic attracting sets with
  any finite co-dimension, extending the co-dimension two case.

  We provide examples of such attractors, either with non-sectional
  hyperbolic equilibria, or with sectional-hyperbolic equilibria of
  mixed type, i.e., with a Lorenz-like singularity together with a
  Rovella-like singularity in a transitive set. These are
  higher-dimensional versions of contracting Lorenz-like attractors
  (also known as Rovella-like attractors) to which we apply our
  criteria to obtain a physical/SRB measure with full ergodic basin.

  We also adapt the previous examples to obtain higher co-dimensional
  non-uniformly sectional expanding attractors; and also asymptotical
  $p$-sectional hyperbolic attractors which are \emph{not} non-uniformly
  $(p-1)$-expanding, for any finite $p>2$.
\end{abstract}
 
\maketitle

\tableofcontents

\section{Introduction}
\label{sec:introduction}

The theory of uniformly hyperbolic systems, since its inception with
the seminal work of Smale, Anosov, and Sinai, has provided a robust
framework for understanding complex dynamical behavior, including the
existence of physical (or SRB) measures for hyperbolic
attractors~\cite{AS67,Sm67,BR75}. However, many important dynamical
systems arising in applications --- such as the Lorenz attractor ---
are not uniformly hyperbolic, yet exhibit rich and persistent chaotic
dynamics; see e.g.~\cite{AraPac2010s}.

Labarca and Pacifico~\cite{LP86} introduced the singular horseshoe, a
variation the geometric Lorenz attractor conceived to disprove
Palis-Smale's stability conjecture for flows on manifolds with
boundary. Later, Rovella~\cite{Ro93} introduced another variation of
geometric Lorenz attractor, replacing the singularity by one with a
central contracting condition. These models are known as contracting
Lorenz models or simply Rovella attractors, and their singularities
known as ``Rovella-like''.

A fundamental breakthrough was achieved by
Morales-Pacifico-Pujals~\cite{MPP99} with the concept
of~\emph{singular hyperbolicity}, which captures a weaker form of
hyperbolicity compatible with the presence of equilibria. This allows,
for instance, a rigorous description of the Lorenz
attractor~\cite{MPP04,Tu99}. In its original form, singular
hyperbolicity requires a dominated splitting
\( T_\Lambda M = E^s \oplus E^c \) into a uniformly contracting
subbundle \( E^s \) and a \emph{volume-expanding} central subbundle
\( E^c \). A strengthening of this notion, \emph{sectional
  hyperbolicity}, demands that every $2$-plane inside \( E^c \) is
expanded in area, which guarantees the existence of physical/SRB
measures for the attracting
sets~\cite{MeMor08,APPV,LeplYa17,araujo_2021}, which encompass the
multidimensional Lorenz attractor~\cite{BPV97}.

A \emph{physical measure} is an invariant probability measure for
which time averages exist and coincide with the space average, for a
set of initial conditions with positive Lebesgue measure, i.e. in the
weak$^*$ topology of convergence of probability measures we have
$$
B(\mu):=\left\{z\in M: \lim_{T\nearrow\infty} \frac1T \int_0^T
  \delta_{\phi_t (z)} \,dt = \mu \right\}
\;\text{with  } \Leb(B(\mu))>0.
$$
This set is the \emph{basin} of the measure. Sinai, Ruelle and Bowen
introduced this notion about fifty years ago, and proved that, for
uniformly hyperbolic (Axiom~A) diffeomorphisms and flows, time averages
exist for Lebesgue almost every point and coincide with one of
finitely many physical measures; see~\cite{BR75,Si72}.

Recent developments have extended these ideas in several
directions. One of the coauthors~\cite{Salgado19} introduced the
concept of $p$-sectional hyperbolicity, where the central bundle is
required to be $p$-sectionally expanding --- that is, the
$p$-dimensional volume is uniformly expanded along the central
direction.

The study of \emph{asymptotically sectional-hyperbolic sets},
introduced by Morales and San Martin~\cite{MorSM17}, recently advanced
by San Martin and Vivas~\cite{smvivas,SmartinVivas20}, extended the
theory encompassing systems where hyperbolicity holds asymptotically
outside the stable manifolds of singularities, including attractors
with Rovella-like singularities in any three-dimensional manifold, and
the singular-horseshoe

One of the coauthors with Castro, Pacifico and Pinheiro~\cite{ACPP11}
proposed a multidimensional analogue of the Rovella attractor,
featuring singularities with multidimensional expanding directions and
physical measures supported on non-uniformly expanding
attractors. More recently, together with
Sousa~\cite{ArSal25}, we established conditions for the existence of
physical/SRB measures in partially hyperbolic attracting sets with
non-uniform sectional expansion, and for asymptotically sectional
hyperbolic attracting sets with two-dimensional central direction ---
the ``co-dimension two'' case, unifying known examples like
Lorenz and Rovella attractors.

Nevertheless, most existing results focus on low codimensions or
require strong non-uniformity conditions. A natural and important
question is whether these results can be extended to attractors with
\emph{any finite co-dimension} (that is, any finite central dimension)
and more diverse singularity types, including \emph{not} sectionally
hyperbolic singularities; or exhibit \emph{mixed-type} singularities,
i.e., coexisting Lorenz-like and Rovella-like singularities.

In this work, we obtain sufficient conditions for the existence of
physical/SRB measures for asymptotic sectionally hyperbolic attracting
sets with any finite co-dimension, thus generalizing the known results
for codimension two, allowing us to handle attracting sets with slow
recurrence to equilibria and weak asymptotic sectional expansion.

We construct new examples of higher co-dimensional attractors that are
asymptotically sectionally hyperbolic and either contain
non-sectionally hyperbolic equilibria; or combine Lorenz-like and
Rovella-like singularities in the same transitive set; to which we
apply our existence result for physical/SRB measures with full ergodic
basins.

Moreover, we adapt these constructions to produce: attractors with
non-uniformly sectionally expanding central direction in higher
co-dimension; and asymptotic $p$-sectional hyperbolic attractors that
are \emph{bot} non-uniformly $(p-1)$-expanding along the central
direction, for any given dimension $p > 2$.

These examples show that a theory of $p$-sectional hyperbolicity and
asymptotic sectional hyperbolicity is not only natural but essential
for describing dynamics beyond the low-codimension regime, and that
physical measures are present even among mixed singularity
configurations.

\subsection{Statements of the results}
\label{sec:statements-results}

Let $M$ be a compact connected manifold with dimension $\dim M=m$,
endowed with a Riemannian metric, induced distance $d$ and volume form
$\m$. Let $\fX^r(M)$, $r\ge1$, be the set of $C^r$ vector fields on $M$
endowed with the $C^r$ topology and denote by $\phi_t$ the flow
generated by $G\in\fX^r(M)$.

\subsubsection{Preliminary definitions}
\label{sec:prelim-definit}

We say that $\sigma\in M$ with $G(\sigma)=0$ is an {\em equilibrium}
or \emph{singularity}. In what follows we denote by $\sing(G)$ the
family of all such points. We say that an equilibrium
$\sigma\in\sing(G)$ is \emph{hyperbolic} if all the eigenvalues of
$DG(\sigma)$ have non-zero real part.

An \emph{invariant set} $\Lambda$ for the flow $\phi_t$, generated by
the vector field $G$, is a subset of $M$ which satisfies
$\phi_t(\Lambda)=\Lambda$ for all $t\in\RR$. A point $p\in M$ is
\emph{periodic} for the flow $\phi_t$ generated by $G$ if $G(p)\neq 0$
and there exists $\tau>0$ so that $\phi_\tau(p)=p$; its orbit
$\cO_G(p)=\phi_{\RR}(p)=\phi_{[0,\tau]}(p)=\{\phi_tp: t\in[0,\tau]\}$
is a \emph{periodic orbit}, an invariant simple closed curve for the
flow.  An invariant set is \emph{nontrivial} if it is not a finite
collection of periodic orbits and equilibria.

Given a compact invariant set $\Lambda$ for $G\in \fX^r(M)$, we say
that $\Lambda$ is \emph{isolated} if there exists an open set
$U\supset \Lambda$ such that
$ \Lambda =\bigcap_{t\in\RR}\close{\phi_t(U)}$.  If $U$ can be chosen
so that $\close{\phi_t(U)}\subset U$ for all $t>0$, then we say that
$\Lambda$ is an \emph{attracting set} and $U$ a \emph{trapping region}
(or \emph{isolating neighborhood}) for
$\Lambda=\Lambda_G(U)=\cap_{t>0}\close{\phi_t(U)}$.

An \emph{attractor} is a transitive attracting set, that is,
an attracting set $\Lambda$ with a point $z\in\Lambda$ so that its
$\omega$-limit
$
  \omega(z):=\left\{y\in M: \exists t_n\nearrow\infty\text{  s.t.
  } \phi_{t_n}z\xrightarrow[n\to\infty]{}y \right\}
$
coincides with $\Lambda$.

\subsubsection{Partial hyperbolic attracting sets for vector fields}
\label{sec:part-hyperb-diff}

Let $\Lambda$ be a compact invariant set for $G \in \fX^r(M)$.  We say
that $\Lambda$ is {\em partially hyperbolic} if the tangent bundle
over $\Lambda$ can be written as a continuous $D\phi_t$-invariant
Whitney sum $ T_\Lambda M=E^s\oplus E^{cu}, $ where
$d_s=\dim(E^s_x)\ge1$ and $d_{cu}=\dim (E^{cu}_x)\ge2$ for $x\in\Lambda$,
and there exists a constant $\lambda >0$ such that for all
$x \in \Lambda$, $t\ge0$, we have
\begin{itemize}
  \item domination of the splitting:
$\|D\phi_t | E^s_x\| \cdot \|D\phi_{-t} | E^{cu}_{\phi_tx}\| \le e^{-\lambda t}$;
\item uniform contraction along $E^s$:
  $\|D\phi_t | E^s_x\| \le e^{-\lambda t}$;
\end{itemize}
for some choice of the Riemannian metric on the manifold, see
e.g. \cite{Goum07}. Changing the metric does not change the rate
$\lambda$ but might introduce the multiplication by a constant.

Then $E^s$ is the stable bundle and $E^{cu}$ the center-unstable
bundle.

\begin{remark}[domination and partial hyperbolicity for vector fields]
  \label{rmk:domparthyp}
  In the vector field setting, a dominated splitting is automatically
  partially hyperbolic whenever the flow direction is contained in the
  central-unstable bundle $X\in E^{cu}$.  In fact, this inclusion is
  equivalent to partial hyperbolicity; see e.g.~\cite[Lemma
  3.2]{arsal2012a}.  Since the flow direction is invariant, partial
  hyperbolicity is the natural setting to consider when studying
  invariant sets (which are not composed only of equilibria) for flows
  with a dominated splitting.
\end{remark}

A {\em partially hyperbolic attracting set} is a partially hyperbolic
set that is also an attracting set.

\subsubsection{Singular/sectional-hyperbolicity}
\label{sec:singul-hyperb-asympt}

The center-unstable bundle $E^{cu}$ is \emph{volume expanding} if
there exists $K,\theta>0$ such that
$|\det(D\phi_t| E^{cu}_x)|\geq K e^{\theta t}$ for all $x\in \Lambda$,
$t\geq 0$.

We say that a compact nontrivial invariant set $\Lambda$ is a
\emph{singular hyperbolic set} if all equilibria in $\Lambda$ are
hyperbolic, and $\Lambda$ is partially hyperbolic with volume
expanding center-unstable bundle.  A singular hyperbolic set which is
also an attracting set is called a {\em singular hyperbolic attracting
  set}.

For any given $2\le p\le d_{cu}:=\dim E^{cu}$, we say that $E^{cu}$ is
\emph{$p$-sectionally expanding} if there are
positive constants $K , \theta$ such that for every $x \in \Lambda$
and every $p$-dimensional linear subspace $L_x \subset E^{cu}_x$ one has
$|\det(D\phi_t| L_x)|\geq K e^{\theta t}$ for all $t\geq 0$. 

A \emph{$p$-sectional-hyperbolic (attracting) set} is a partially
hyperbolic (attracting) set whose central subbundle is $p$-sectionally
expanding.

The case $p=2$ is simply denoted \emph{sectionally expanding} and
\emph{sectional-hyperbolicity} respectively, in what follows.

\subsubsection{Asymptotical sectional-hyperbolicity}
\label{sec:asympt-singul-hyperb}

A compact invariant partially hyperbolic set $\Lambda$ of a vector
field $G$ whose equilibria are hyperbolic, is \emph{asymptotically
  sectional-hyperbolic} (ASH) if the center-unstable subbundle is
eventually asymptotically sectional expanding outside the stable
manifold of the equilibria. That is, there exists $c_*>0$ so that the
\emph{asymptotically expanding condition} (ASE) holds
  \begin{align}\label{eq:assecexp}
    \limsup_{T\nearrow\infty}\frac1T \log|\det(D\phi_T\mid_{F_x})|\ge c_*
  \end{align}
  for every
  $x\in\Lambda\setminus \cup\{W^s_\sigma:\sigma\in\sing_\Lambda(G)\}$
  and each $2$-dimensional linear subspace $F_x$ of $E^{cu}_x$, where
  we write $\sing_\Lambda(G)=\sing(G)\cap\Lambda$ and
  $W^s_\sigma=\{x\in M: \lim_{t\to+\infty}\phi_tx = \sigma\}$ is the
  \emph{stable manifold} of the hyperbolic equilibrium $\sigma$. It is
  well-known that $W^s_\sigma$ is a immersed submanifold of $M$; see
  e.g.~\cite{PM82}. This implies that all transverse directions to the
  vector field along the center-unstable subbundle have positive
  Lyapunov exponent; that is, if $v\in E^{cu}_x\setminus(\RR\cdot G)$,
  then
  $\chi(x,v):=\limsup_{t\to+\infty}\log\|D\phi_t(x)v\|^{1/t}\ge
  c_*>0$; see~\cite[Theorem 1.6]{ArSal25}.
  
  \begin{lemma}[Hyperbolic Lemma]
\label{le:hyplemma}
Every compact invariant subset $\Gamma$ without equilibria contained
in a asymptotically sectional-hyperbolic set is uniformly hyperbolic.
\end{lemma}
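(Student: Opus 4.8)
The plan is to promote the \emph{asymptotic} two-dimensional sectional expansion along $E^{cu}$ to a \emph{uniform} volume expansion over $\Gamma$, thereby presenting $\Gamma$ as a singular-hyperbolic set without equilibria, and then invoking the classical fact that such a set is uniformly hyperbolic. Two consequences of ``$\Gamma$ compact, invariant and without equilibria'' will be used throughout: (i) $0<\inf_\Gamma\|G\|\le\sup_\Gamma\|G\|<\infty$, so that $T^{-1}\log(\|G(\phi_Tx)\|/\|G(x)\|)\to0$ uniformly on $\Gamma$; and (ii) $\Gamma\cap W^s_\sigma=\emptyset$ for every $\sigma\in\sing(G)$, since a point $x\in\Gamma\cap W^s_\sigma$ would force the equilibrium $\sigma=\lim_{t\to+\infty}\phi_tx$ into the closed invariant set $\Gamma$. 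By (ii), the ASE condition~\eqref{eq:assecexp} holds at \emph{every} point of $\Gamma$, for \emph{every} $2$-plane inside $E^{cu}$.

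Since $\Gamma\subset\Lambda$, it carries the partially hyperbolic splitting $T_\Gamma M=E^s\oplus E^{cu}$, and $G\in E^{cu}$ along $\Gamma$ by Remark~\ref{rmk:domparthyp}; thus $\RR\cdot G$ is a continuous $D\phi_t$-invariant line subbundle of $E^{cu}|_\Gamma$. Let $\Pi_x\colon T_xM\to N_x:=\langle G(x)\rangle^{\perp}$ be the orthogonal projection, put $N^s_x:=\Pi_x(E^s_x)$, $N^{cu}_x:=\Pi_x(E^{cu}_x)$, and let $P_t:=\Pi_{\phi_t\cdot}\circ D\phi_t$ be the linear Poincar\'e flow on $N$. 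Then $N=N^s\oplus N^{cu}$ is a continuous $P_t$-invariant splitting (domination of $E^s\oplus E^{cu}$ keeps the angle between $N^s$ and $N^{cu}$ bounded away from $0$), and the given uniform contraction of $E^s$ yields uniform $P_t$-contraction of $N^s$; see e.g.~\cite{arsal2012a}. For $x\in\Gamma$ and $0\neq\bar v\in N^{cu}_x$ write $\bar v=\Pi_x v$ with $v\in E^{cu}_x$ and set $F_x:=\langle G(x),\bar v\rangle\subset E^{cu}_x$; computing the Jacobian along $F_x$ in the orthogonal basis $\{G(x),\bar v\}$ gives
\[
  |\det(D\phi_T|_{F_x})|
  =\frac{\|G(\phi_Tx)\|}{\|G(x)\|}\cdot\frac{\|P_T\bar v\|}{\|\bar v\|},
\]
so (i) and~\eqref{eq:assecexp} give $\limsup_{T\to\infty}T^{-1}\log\|P_T\bar v\|\ge c_*>0$ for every nonzero $\bar v\in N^{cu}$.

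It remains to make this expansion uniform. First, by Oseledets' theorem, for every $\phi_t$-invariant probability $\mu$ with $\supp\mu\subset\Gamma$ \emph{all} Lyapunov exponents of the cocycle $(P_t|_{N^{cu}})$ are $\ge c_*$ at $\mu$-a.e.\ point: otherwise a vector in the Oseledets subspace of the smallest exponent would satisfy $\lim_{T\to\infty}T^{-1}\log\|P_T\bar v\|<c_*$, contradicting the previous paragraph. Since $N^{cu}$ is a continuous $P_t$-invariant subbundle over the compact set $\Gamma$ with only positive central exponents for every invariant measure, a standard compactness / sub-additive argument --- of the type used by Cao to characterise uniform hyperbolicity via the Lyapunov exponents of invariant measures, passing if convenient to the time-one map $\phi_1|_\Gamma$ --- produces $C\ge1$, $\theta>0$ with $\|(P_T|_{N^{cu}_x})^{-1}\|\le Ce^{-\theta T}$ for all $x\in\Gamma$, $T\ge0$. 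Together with (i), $|\det(D\phi_T|_{E^{cu}_x})|=(\|G(\phi_Tx)\|/\|G(x)\|)\,|\det(P_T|_{N^{cu}_x})|$ grows uniformly exponentially, i.e.\ $E^{cu}|_\Gamma$ is uniformly volume expanding, while $N^s$ is uniformly $P_t$-contracted and dominated by $N^{cu}$. Hence $\Gamma$ is a singular-hyperbolic set containing no equilibria, and it is classical that such a set is uniformly hyperbolic: $N^{cu}$ lifts to a unique $D\phi_t$-invariant, uniformly expanding subbundle $E^u\subset E^{cu}|_\Gamma$ complementary to $\RR\cdot G$ (a graph transform in the central direction), giving the hyperbolic splitting $T_\Gamma M=E^s\oplus\RR\cdot G\oplus E^u$; see e.g.~\cite{AraPac2010s}. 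Alternatively one runs the same Cao-type argument directly on the cocycle induced by $D\phi_t$ on the bundle of $2$-planes of $E^{cu}$, obtaining uniform sectional expansion, so that $\Gamma$ is a sectional-hyperbolic set without singularities, and then quotes the corresponding classical statement.

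The main obstacle is the step promoting the pointwise/asymptotic estimate $\limsup_{T\to\infty}T^{-1}\log\|P_T\bar v\|\ge c_*$ to a uniform exponential rate over $\Gamma$: soft arguments are insufficient here, one needs the sub-additive / ergodic-optimisation machinery, and this is exactly where compactness and the absence of equilibria are essential --- an equilibrium in $\overline{\Gamma}$ would make $\|G\|$ vanish in the limit and allow the relevant Jacobians to grow only sub-exponentially. The remaining ingredients are routine: checking that the normal splitting is well defined with angles bounded below and that $N^s$ is uniformly contracted, the Jacobian identities, and the appeal to the classical ``singular- (or sectional-) hyperbolic set without singularities is hyperbolic'' lemma.
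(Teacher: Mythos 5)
Your argument is correct, but it takes a genuinely different route from the paper, which does not prove this lemma at all: it simply cites \cite[Proposition 1.8]{MPP04} for the sectional-hyperbolic case and \cite[Theorem 2.2]{SmartinVivas20} for the asymptotic one. What you supply is a self-contained proof, and its two key moves are sound. First, the reduction is right: absence of equilibria in the compact invariant set $\Gamma$ forces $\Gamma$ to be disjoint from every $W^s_\sigma$, so the ASE condition~\eqref{eq:assecexp} holds at every point of $\Gamma$ and for every $2$-plane, and the determinant identity for planes containing the flow direction converts it, via $0<\inf_\Gamma\|G\|\le\sup_\Gamma\|G\|<\infty$, into $\limsup_{T}T^{-1}\log\|P_T\bar v\|\ge c_*$ for every nonzero $\bar v\in N^{cu}$. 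Second, the promotion from this pointwise asymptotic estimate to a uniform rate via Oseledets plus Cao's criterion (positive Lyapunov exponents for all invariant measures of a continuous cocycle over a compact base imply uniform expansion) is exactly the right tool and is correctly applied to the time-one cocycle $P_1\mid_{N^{cu}}$. One phrase should be repaired: a ``singular-hyperbolic set without equilibria'' is \emph{not} in general uniformly hyperbolic when $d_{cu}>2$ --- volume expansion of $E^{cu}$ permits uniformly contracted directions inside it, which is precisely why sectional hyperbolicity was introduced --- so the sentence deducing hyperbolicity from singular hyperbolicity is false as literally stated. It is harmless here only because you have already established the much stronger fact that $P_t$ uniformly expands all of $N^{cu}$; with that in hand the conclusion follows from hyperbolicity of the linear Poincar\'e flow (Doering's criterion, or the graph transform you sketch), or from your alternative route (Cao's argument on the $2$-plane bundle gives uniform sectional expansion, and then the classical Hyperbolic Lemma for sectional-hyperbolic sets applies). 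Compared with the paper's citation, your proof buys a dimension-free, self-contained argument at the cost of importing Cao's theorem and Doering's criterion as external inputs.
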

  \begin{proof}
    See e.g.~\cite[Proposition 1.8]{MPP04} for sectional-hyperbolic
    sets; and~\cite[Theorem 2.2]{SmartinVivas20} for the
    asymptotically sectional-hyperbolic case.
  \end{proof}

  We say that an invariant compact subset $\Gamma$ is
  \emph{(uniformly) hyperbolic} if $\Gamma$ is partially hyperbolic
  and the central-unstable bundle admits a continuous splitting
  $E^{cu}=(\RR\cdot G)\oplus E^u$, with $\RR\cdot G$ the
  one-dimensional invariant flow direction and $E^u$ a uniformly
  expanding subbundle. That is, we get the following dominated
  splitting $T_\Gamma M= E^s\oplus(\RR\cdot G)\oplus E^u$ into
  three-subbundles; see e.g.~\cite{fisherHasselblatt12}.

\subsubsection{Asymptotical $p$-sectional hyperbolicity}
\label{sec:asympt-p-section}

Analogously, we say that a compact invariant partially hyperbolic set
$\Lambda$ of a vector field $G$ whose equilibria are hyperbolic, is
\emph{asymptotically $p$ sectional-hyperbolic} (pASH) if the
center-unstable subbundle is eventually asymptotically $p$-sectional
expanding outside the stable manifold of the equilibria: that is there
exists $c_*>0$ so that~\eqref{eq:assecexp} holds for every
$x\in\Lambda\setminus \cup\{W^s_\sigma:\sigma\in\sing_\Lambda(G)\}$
and replacing $F_x$ by any $p$-dimensional linear subspace of
$E^{cu}_x$.

Here it is implicitly assumed that $2\le p\le d_{cu}$ is fixed.
  
\subsection{Non-uniform sectional expansion}
\label{sec:non-uniformly-expand-1}

Let us fix $G\in\fX^2(M)$ endowed with a partially hyperbolic
attracting set $\Lambda=\Lambda_G(U)$ with a trapping region $U$.
Then we can take a continuous extension
$T_UM=\wt{E^s}\oplus \wt{E^{cu}}$ of $T_\Lambda M=E^s\oplus E^{cu}$
and for small $a>0$ find center unstable and stable cones
\begin{align}
  \label{eq:cucone}
  C^{cu}_a(x)
  &=
    \{ v=v^s+v^c : v^s\in \wt{E^s}_x, v^c\in\wt{E^{cu}}_x, x\in
    U, \|v^s\|\le a\|v^c\|\}, \qand
  \\
  C^s_a(x)
  &=
    \{ v=v^s+v^c : v^s\in \wt{E^s}_x, v^c\in\wt{E^{cu}}_x, x\in
    U, \|v^c\|\le a\|v^s\|\}, \nonumber
\end{align}
which are invariant in the following sense
\begin{align}\label{eq:coneinv}
  D\phi_t(x)\cdot C^{cu}_a(x)\subset C^{cu}_a(\phi_t(x))
  \qand
  D\phi_{-t}\cdot C^{s}_a(x)\supset C^{s}_a(\phi_{-t}(x)),
\end{align}
for all $x\in\Lambda$ and $t>0$ so that $\phi_{-s}(x)\in U$ for all
$0<s\le t$; see e.g.~\cite{ArMel17}. We can assume
that $\wt{E^{cu}_x}\subset C^{cu}_a(x)$ still contains the flow
direction $G(x)$ for each $x\in U$; see~\cite[Section 1.2]{ArSal25}.
 We can
also assume, without loss of generality according to~\cite{ArMel17},
that the continuous extension of the stable direction $E^s$ of the
splitting is still $D\phi_t$-invariant and
$\wt{E^s_x}\subset C^s_a(x), x\in U$.
In what follows, we keep the notation $T_U M=E^s\oplus E^{cu}$ and
write $N_x^{cu}=E^{cu}_x\cap G(x)^{\perp}, x\in U$.

We write $f:=\phi_1$ for the time-$1$ diffeomorphism
induced by the flow. We say that the attracting set $\Lambda$ is
\begin{description}
\item[weak non-uniform $2$-sectionally expanding (wNU2SE)] if
  there exists $c_0>0$ so that
  \begin{align}\label{eq:wNU2SE}
    \Omega=\left\{
    x\in U: \liminf_{n\nearrow\infty}\frac1n\sum\nolimits_{i=0}^{n-1}
    \log
    \big\|\wedge^2 (Df\mid_{E^{cu}_{f^i x}})^{-1}\big\|
    \le - c_0
    \right\}
    \qand \m(\Omega)>0.
  \end{align}
\end{description}
This is enough to ensure existence a physical/SRB measure under a slow
recurrence condition, as explained in what follows.

\subsection{Existence of physical/SRB measures}
\label{sec:existence-physic-mea}

We can ensure existence of an ergodic
physical/SRB measure if the partially hyperbolic splitting is of
codimension $2$.

\begin{theorem}{\cite[Theorem E]{ArSal25}}
  \label{thm:fABV}
  Let a partially hyperbolic attracting set $\Lambda=\Lambda_G(U)$ for
  a vector field $G\in\fX^2(M)$ be given, with $d_{cu}=\dim
  E^{cu}=2$. Then $\Lambda$ satisfies the following weak asymptotical
  sectional expanding (wASE) condition on a positive volume
  subset
  \begin{align}\label{eq:wASE}
    \leb\left( \left\{ x\in U: \liminf_{T\nearrow\infty} \frac1T
    \log|\det (D\phi_T\mid_{E^{cu}_x})| >0 \right\} \right) >0
  \end{align}
  \emph{if, and only if}, there exists a
  physical/SRB ergodic hyperbolic measure $\mu$.
  If $\Lambda$ is transitive, then $\mu$ is unique and
  $\leb(\Omega\setminus B(\mu))=0$.

  Reciprocally, without restriction on $d_{cu}$, the existence of an
  invariant ergodic hyperbolic physical/SRB measure implies that
  (wNU2SE) holds on a positive volume subset of $U$.
\end{theorem}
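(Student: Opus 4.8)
\emph{Plan and the easy direction.} I would split the equivalence into its two directions: the ``reciprocal'' statement and the ``only if'' implication are soft and use no restriction on $d_{cu}$, so I treat them together; the ``if'' implication, where $d_{cu}=2$ is essential, is the substantial one. For the easy direction, let $\mu$ be an ergodic hyperbolic physical/SRB measure. Using Remark~\ref{rmk:domparthyp}, take the continuous $D\phi_t$-invariant extension $\wt{E^{cu}}$ over $U$ containing the flow direction, so that $x\mapsto\psi(x):=\tr\big(DG(x)\mid_{\wt{E^{cu}_x}}\big)$ is continuous and bounded on $\ov U$ and $\frac1T\log|\det(D\phi_T\mid_{E^{cu}_x})|=\frac1T\int_0^T\psi(\phi_tx)\,dt$. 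For every $x$ in the positive-volume set $B(\mu)$ these time averages converge to $\int\psi\,d\mu$, which is the sum of the Lyapunov exponents of $\mu$ along $E^{cu}$; since $\mu$ is SRB (absolutely continuous conditionals along unstable leaves tangent to $E^{cu}$) its $d_{cu}-1$ central exponents transverse to the flow are strictly positive, hence that sum is some $c_*>0$, so the $\liminf$ in~\eqref{eq:wASE} is a genuine limit equal to $c_*$ on $B(\mu)$; this proves ``only if''. Passing to $f=\phi_1$ and the bounded continuous observable $x\mapsto\log\big\|\wedge^2(Df\mid_{E^{cu}_x})^{-1}\big\|$, the corresponding averages along $B(\mu)$ converge to $-(\lambda^{cu}_{d_{cu}-1}(\mu)+\lambda^{cu}_{d_{cu}}(\mu))<0$, minus the sum of the two smallest central exponents (nonnegative with exactly one vanishing), which is exactly~\eqref{eq:wNU2SE} with $c_0=\lambda^{cu}_{d_{cu}-1}(\mu)+\lambda^{cu}_{d_{cu}}(\mu)$; the passage between flow and time-one averages on $B(\mu)$ for continuous observables is routine.

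\emph{The main direction: building an SRB measure ($d_{cu}=2$).} Assume~\eqref{eq:wASE}. Replacing the set there by a sub-level set, fix $c>0$ and a positive-volume $H\subset U$ with $\liminf_T\frac1T\log|\det(D\phi_T\mid_{E^{cu}_x})|\ge c$ on $H$; by Fubini, integrating the flow direction together with the one-dimensional $N_x^{cu}=E^{cu}_x\cap G(x)^\perp$, we may take $H$ to be a union of small center-unstable discs transverse to $E^s$. I would then run the Alves--Bonatti--Viana scheme for non-uniformly expanding dynamics, in the linear-Poincar\'e-flow picture: because $d_{cu}=2$ the transverse bundle $N^{cu}$ is one-dimensional, and $|\det(D\phi_T\mid_{E^{cu}_x})|$ coincides with the expansion rate of the linear Poincar\'e flow on $N^{cu}_x$ up to the factor $\|G(\phi_Tx)\|/\|G(x)\|$, whose time average vanishes along every orbit not converging to an equilibrium; hence~\eqref{eq:wASE} yields genuine non-uniform expansion of the one-dimensional bundle $N^{cu}$ over $H$. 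A Pliss-type lemma then provides, for $\m$-a.e.\ $x\in H$, infinitely many hyperbolic times $n$ at which the center-unstable disc through $x$ is expanded by $f^n$ with uniformly bounded distortion of the center-unstable Jacobian; forming $\mu_n=\frac1n\sum_{i=0}^{n-1}f^i_*\big(\m|_H/\m(H)\big)$ and passing to a weak$^*$ limit $\mu$, the hyperbolic-time discs show that $\mu$ has absolutely continuous conditional measures along center-unstable plaques, i.e.\ $\mu$ is SRB.

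\emph{Physical measure, transitivity, and the basin.} Decomposing $\mu$ into ergodic components, $\m$-a.e.\ component is again SRB and, by the standard basin argument, Lebesgue-a.e.\ point of $H$ lies in the basin of some ergodic physical component $\mu_\infty$ with $\m(B(\mu_\infty))>0$; uniform contraction on $E^s$ together with the positive central expansion inherited from~\eqref{eq:wASE} make $\mu_\infty$ hyperbolic, and by the Hyperbolic Lemma~\ref{le:hyplemma} its support away from $\sing(G)$ is uniformly hyperbolic, which is convenient for setting up the local product structure. Relabel $\mu:=\mu_\infty$. If $\Lambda$ is transitive, the center-unstable plaques of $\mu$-generic points are dense in $\Lambda$, so two distinct ergodic physical measures would have ``unstable''-saturated basins occupying disjoint open subsets of the recurrent part of $\Lambda$, contradicting transitivity; hence $\mu$ is unique. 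Finally, by~\eqref{eq:wNU2SE} and the first paragraph, $\m$-a.e.\ point of $\Omega$ has positive central expansion and therefore belongs to the basin of some ergodic physical measure, which by uniqueness is $\mu$; thus $\m(\Omega\setminus B(\mu))=0$.

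\emph{The main difficulty.} The hard part will be the distortion control in the ``if'' direction for orbits that recur to the hyperbolic equilibria, where $\|G\|$ degenerates and the comparison between $|\det(D\phi_T\mid_{E^{cu}})|$ and the linear Poincar\'e flow on $N^{cu}$ deteriorates; one must still obtain uniformly bounded distortion of the center-unstable Jacobian along the hyperbolic-time discs. Here $d_{cu}=2$ is exploited crucially: $N^{cu}$ is one-dimensional, so only a single scalar distortion estimate is needed and no a priori slow-recurrence hypothesis is required. It is exactly this step that fails for $d_{cu}>2$, where several directions inside $E^{cu}$ must be controlled simultaneously, and it must then be compensated by an explicit slow-recurrence condition — which is the main theme of the present paper.
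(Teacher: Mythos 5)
First, note that the paper does not reprove this statement: it is quoted verbatim from \cite{ArSal25} (Theorem~E there), and the only in-paper argument of the same flavour is the proof of Theorem~\ref{mthm:hdASH}, which goes through the Catsigeras--Cerminara--Enrich inequality (Theorem~\ref{thm:GenPesin}) combined with Ruelle's inequality to get Pesin's formula for a weak$^*$ limit of empirical measures, and then Ledrappier--Young \cite{LY85} to conclude the SRB property. Your main direction takes a genuinely different route (Alves--Bonatti--Viana hyperbolic times, Pliss lemma, push-forward of Lebesgue, hyperbolic-time discs), and that route has a genuine gap precisely where you locate ``the main difficulty''. The point is not merely distortion: the non-uniform expansion you extract from~\eqref{eq:wASE} is expansion of the \emph{area} of the two-dimensional bundle $E^{cu}$, which contains the flow direction. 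At a hyperbolic time for the area cocycle, the image of a $cu$-disc has large area, but the flow direction inside $E^{cu}$ is neutral on average and can be strongly contracted along stretches of orbit near the equilibria; so the image can be a long thin strip rather than a disc of uniform inner radius. Without discs of uniform size the ABV construction of absolutely continuous conditional measures on $cu$-plaques does not go through, and your attempted fix via the linear Poincar\'e flow on $N^{cu}$ degenerates exactly where it is needed, near $\sing_\Lambda(G)$. This is the reason the quoted proof abandons hyperbolic times altogether and uses the entropy inequality $h_\mu(f)\ge\int\log|\det(Df\mid_{E^{cu}})|\,d\mu$ of \cite{CatCerEnr15}, where the hypothesis $d_{cu}=2$ enters only to force Pesin's formula (the single non-flow central exponent must absorb all of the determinant growth), with no distortion or recurrence control required.

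Two further inaccuracies. (i) Your claim that the time average of $\log\big(\|G(\phi_Tx)\|/\|G(x)\|\big)/T$ ``vanishes along every orbit not converging to an equilibrium'' is false pointwise: an orbit outside $\cup W^s_\sigma$ may still pass within distance $e^{-aT_k}$ of $\sigma$ at times $T_k$, making this quantity $\le -a<0$ along a subsequence. The statement holds $\mu$-a.e.\ for invariant $\mu$ without atoms at equilibria (by Birkhoff plus Poincar\'e recurrence into $\{\|G\|\ge\delta\}$), which is what is actually needed, but the pointwise version you invoke is not available. (ii) In the reciprocal direction, the Birkhoff averages of the one-step observable $\psi^{cu}(x)=\log\|\wedge^2(Df\mid_{E^{cu}_x})^{-1}\|$ converge to $\mu(\psi^{cu})$, not to $-(\lambda^{cu}_{d_{cu}-1}+\lambda^{cu}_{d_{cu}})$; submultiplicativity only gives $\mu(\psi^{cu})\ge-(\lambda^{cu}_{d_{cu}-1}+\lambda^{cu}_{d_{cu}})$, which is the wrong direction for concluding~\eqref{eq:wNU2SE}. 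Some additional argument (an adapted metric, or passing to a large iterate $f^N$) is needed to get the one-step average below $-c_0$. The ``only if'' implication ($\mu$ physical/SRB hyperbolic $\Rightarrow$ wASE on $B(\mu)$) is essentially fine modulo the standard sub-exponential comparison between the extended bundle $\wt{E^{cu}}$ and its forward images.
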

Hence, to obtain a physical measure, it is enough to obtain a sequence
of times with asymptotic sectional expansion, along the trajectories
on a positive volume subset.

\begin{remark}[wNU2SE implies wASE]
  From~\cite[Theorem 1.6]{ArSal25} we have that a trajectory not
  converging to any singularity and satisfying condition wNU2SE,
  also satisfies wASE. Thus, we can replace~\eqref{eq:wASE}
  by~\eqref{eq:NU2SE} keeping the conclusion of
  Theorem~\ref{thm:fABV}.
\end{remark}

We can also show the existence of a physical probability measure for
weak ASH attracting sets.
\begin{theorem}[Physical/SRB measure for weak ASH attracting
  sets]{\cite[Corollary G]{ArSal25}}
  \label{thm:physASH}
  Let a $C^2$ vector field $G$ on $M$ and a trapping region $U$ be
  given containing a partially hyperbolic attracting set
  $\Lambda=\Lambda_G(U)$ with $d_{cu}=2$ so that every $x\in\Lambda$
  not converging to any equilibrium satisfies the wASE
  condition~\eqref{eq:wASE}.

  If $\Lambda$ contains only saddle-type hyperbolic equilibria, then
  there exists a physical/SRB probability measure supported on
  $\Lambda$.  If $\Lambda$ is transitive, then $\Lambda$ supports a
  unique physical/SRB probability measure whose basin covers a
  neighborhood of $\Lambda$.
\end{theorem}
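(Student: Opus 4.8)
The plan is to reduce everything to Theorem~\ref{thm:fABV}, so that the whole task becomes checking that the weak asymptotical sectional expanding condition~\eqref{eq:wASE} holds at Lebesgue-almost every point of the trapping region $U$ --- in particular on a set of positive volume. Granting that, the first assertion of Theorem~\ref{thm:fABV} produces an ergodic hyperbolic physical/SRB measure $\mu$; being a weak$^*$-limit of empirical averages of orbits trapped in $U$, it is supported on $\Lambda=\bigcap_{t>0}\overline{\phi_t(U)}$, which settles the first claim. If in addition $\Lambda$ is transitive, the same theorem gives that $\mu$ is the unique physical/SRB measure and that its basin is co-null in the set where~\eqref{eq:wASE} holds; since we will have shown this set to be co-null in $U$, we get $\leb(U\setminus B(\mu))=0$, i.e. $B(\mu)$ has full Lebesgue measure in the neighbourhood $U$ of $\Lambda$.

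First I would discard the points of $U$ converging to an equilibrium. Each $\sigma\in\sing_\Lambda(G)$ is hyperbolic and, by hypothesis, of saddle type, hence has at least one expanding eigendirection, so its stable manifold $W^s_\sigma=\{x\in M:\phi_tx\to\sigma\}$ is an injectively immersed submanifold of $M$ of dimension at most $\dim M-1$; as $\Lambda$ is compact with isolated equilibria, $\sing_\Lambda(G)$ is finite, so $W:=\bigcup_{\sigma\in\sing_\Lambda(G)}W^s_\sigma$ is Lebesgue-null and hence $\leb(U\cap W)=0$. It then remains to prove that every $x\in U\setminus W$ satisfies~\eqref{eq:wASE}.

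The core of the argument is to transport the pointwise expansion hypothesis from $\Lambda$ to $U$. Fix $x\in U\setminus W$ and let $\mu_{x,T}=\tfrac1T\int_0^T\delta_{\phi_sx}\,ds$ be its empirical measures; by the trapping property every weak$^*$-accumulation point $\nu$ of $(\mu_{x,T})_T$ is a $\phi$-invariant probability measure supported on $\Lambda$. Since $d_{cu}=2$, the normalized logarithm $\tfrac1T\log|\det(D\phi_T\mid_{E^{cu}_x})|$ of the area expansion along the centre-unstable direction differs by a term tending to $0$ from a time average $\tfrac1T\int_0^T\psi(\phi_sx)\,ds=\int\psi\,d\mu_{x,T}$ of a fixed continuous function $\psi$ (the infinitesimal rate of $2$-volume growth along $E^{cu}$), so that $\liminf_{T\to\infty}\tfrac1T\log|\det(D\phi_T\mid_{E^{cu}_x})|$ equals the minimum of $\int\psi\,d\nu$ over the accumulation measures $\nu$. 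It therefore suffices to prove $\int\psi\,d\nu>0$ for every such $\nu$. Decompose $\nu$ into its ergodic components: a component that is not a Dirac mass at an equilibrium is carried by points of $\Lambda$ whose forward orbit is dense in its (nondegenerate) support, hence converges to no equilibrium, so the standing hypothesis together with Birkhoff's theorem forces $\int\psi$ to be strictly positive on it; consequently $\int\psi\,d\nu>0$ unless $\nu$ charges some $\delta_\sigma$ with $\psi(\sigma)\le 0$ --- that is, a Rovella-like singularity. Ruling this out is exactly where $x\notin W$ enters: an orbit not converging to $\sigma$ cannot have its empirical measures converge to $\delta_\sigma$, but one needs the sharper, quantitative statement that along the times realizing the limit inferior the orbit of $x$ spends an asymptotically subdominant fraction of time near $\sigma$, a slow-recurrence estimate I would extract from the hypothesis along $\Lambda$ and carry over to $x$ using the continuity of $\psi$ and the invariant cone fields of Section~\ref{sec:non-uniformly-expand-1}. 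With this in hand $x$ satisfies~\eqref{eq:wASE}, completing the verification.

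The hard part will be precisely this transfer step --- showing, for orbits in $U$ that merely accumulate on rather than lie inside $\Lambda$, that the limit inferior of the normalized log-determinant cannot be dragged down to zero or below by orbit segments spending an asymptotically dominant fraction of time near a Rovella-like singularity, given only that every orbit \emph{inside $\Lambda$} off the stable manifolds of the equilibria is asymptotically area-expanding. Everything else --- the negligibility of $W$, the invocation of Theorem~\ref{thm:fABV}, and the uniqueness and full-basin conclusions in the transitive case --- is routine.
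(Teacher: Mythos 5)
The crux of your argument is left unproved, and it is precisely the content of the theorem. You reduce everything to showing that for Lebesgue-a.e.\ $x\in U$ no weak$^*$ accumulation point $\nu$ of the empirical measures $\mu_{x,T}$ satisfies $\int\psi\,d\nu\le 0$, and you correctly identify that the only obstruction is mass of $\nu$ on $\delta_\sigma$ for a sectionally contracting (Rovella-like) equilibrium $\sigma$. But your key assertion --- that an orbit not converging to $\sigma$ cannot have empirical measures accumulating on $\delta_\sigma$ --- is false in general (Bowen-eye-type behaviour: an orbit can spend an asymptotically full fraction of time near a saddle without converging to it, so that $\delta_\sigma$, or a convex combination charging it heavily, is an accumulation point of $\mu_{x,T}$). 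The ``slow-recurrence estimate'' you propose to ``extract from the hypothesis along $\Lambda$'' is not available: the wASE hypothesis on $\Lambda\setminus\bigcup W^s_\sigma$ says nothing quantitative about how long Lebesgue-generic orbits of $U$ linger near the equilibria. Indeed, the paper treats slow recurrence as a genuinely separate hypothesis in the higher-codimensional Theorem~\ref{mthm:hdASH} (condition (B)), Corollary~\ref{mcor:wSR-SRB} shows weak slow recurrence is \emph{equivalent to} (not a consequence of) the existence of the physical measure, and Conjecture~\ref{conj:physnoSR} records that dispensing with it is open. So the ``transfer step'' you defer is not a technicality; it is the theorem.

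Note also that this statement is quoted from \cite[Corollary~G]{ArSal25} rather than proved here, and the route the paper takes for its generalization (the proof of Theorem~\ref{mthm:hdASH} in Section~\ref{sec:existence-physic-mea-1}) is structurally different from yours: instead of verifying the wASE condition~\eqref{eq:wASE} on a positive-volume subset of $U$ and invoking Theorem~\ref{thm:fABV}, it applies the Catsigeras--Cerminara--Enrich generalized Pesin inequality to a Lebesgue-generic $x$ with $\omega(x)\subset\Lambda$, obtains $h_\mu(f)\ge\int\log J^{cu}f\,d\mu$ for a limit empirical measure $\mu$, handles the atomic part of $\mu$ separately, and closes Pesin's formula via Ruelle's inequality and Ledrappier--Young. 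That entropy route is exactly what circumvents the pointwise control over time spent near sectionally contracting equilibria that your argument is missing; if you want to salvage your approach you would have to either import that machinery or prove the slow-recurrence estimate directly, neither of which the proposal does. The preliminary reductions (negligibility of $W$, the cocycle identity for $\psi$, the ergodic decomposition of accumulation measures) are fine.
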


To construct the physical/SRB measure in the presence of equilibria
for a partially hyperbolic attracting set in higher codimensions
(i.e. $d_{cu}>2$), the known proof requires control of the recurrence
near the equilibria, together with a strong form of the condition
wNU2SE.

Let $U\subset M$ be a forward invariant set of $M$ for the flow of a
$C^1$ vector field $G$, where all equilibria are hyperbolic.  We say
that the attracting set $\Lambda=\Lambda_G(U)$ is
\begin{description}
\item[non-uniform $2$-sectionally expanding (NU2SE)] if there exists
$c_0>0$ so that
  \begin{align}\label{eq:NU2SE}
    \Omega=\left\{
    x\in U: \limsup_{n\nearrow\infty}\frac1n\sum\nolimits_{i=0}^{n-1}
    \log
    \big\|\wedge^2 (Df\mid_{E^{cu}_{f^i x}})^{-1}\big\|
    \le - c_0
    \right\}
    \qand \m(\Omega)>0.
  \end{align}
\end{description}
We say that $G$ has
\begin{description}
\item[slow recurrence (SR)] if, on a positive Lebesgue measure subset
  $\Omega\subset U$, for every $\epsilon>0$, we can find $\delta>0$ so
  that
\begin{align}
  \label{eq:SR}
  \limsup_{n\nearrow\infty}\frac1n\sum\nolimits_{i=0}^{n-1}-\log d_\delta
  \big(f^i(x),\sing_\Lambda(G)\big) <\epsilon, \quad x\in\Omega;
\end{align}
\end{description}
where $d_\delta(x,S)$ $\delta$-{\em truncated distance} from $x\in M$
to a subset $S$, that is
\begin{align*}
  d_{\delta}(x,S)=\left\{
  \begin{array}{lll}
d(x,S) & \textrm{if $0<d(x,S)\leq\delta$;}
\\
\left(\frac{1-\delta}{\delta}\right)d(x,S)+2\delta-1 
& \textrm{if $\delta<d(x,S)<2\delta$;}
\\
1 & \textrm{if $d(x,S)\geq2\delta$.}
\end{array} \right.
\end{align*}

\begin{theorem}[Physical/SRB measures for non-uniformly
  sectionally expanding flows]{\cite[Theorem B]{ArSal25}}
  \label{thm:discretefabv}
  Let $G\in\fX^2(M)$ be a vector field with a partially hyperbolic
  attracting set $\Lambda=\Lambda_G(U)$ satisfying (SR) on
  $\Omega\subset U$, with $\m(\Omega)>0$.  Then we have NU2SE on
  $\Omega$ if, and only if, there are finitely many ergodic
  physical/SRB measures whose basins cover $\m$-a.e. point of
  $\Omega$:
  $\m\Big(\Omega\setminus \big(B(\mu_1)\cup\dots\cup
  B(\mu_p)\big)\Big)=0$.
\end{theorem}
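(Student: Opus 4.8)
The statement is an equivalence; the substantial half is the forward implication, that slow recurrence together with NU2SE on $\Omega$ yields finitely many physical/SRB measures whose basins cover $\m$-almost every point of $\Omega$. I would prove it by adapting the Alves--Bonatti--Viana construction of SRB measures through hyperbolic times to the singular flow setting. Work with the time-one diffeomorphism $f=\phi_1$, the continuous extension of $T_UM=E^s\oplus E^{cu}$, and the forward-invariant center-unstable cone field $C^{cu}_a$ (still containing the flow direction); disintegrate $\m|_U$ along a measurable partition of $U$ into embedded $d_{cu}$-dimensional center-unstable disks, so that $\m(\Omega)>0$ forces $\m_D(\Omega\cap D)>0$ for a positive-measure family of such disks $D$. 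A first, purely linear-algebraic, reduction is that NU2SE upgrades to non-uniform expansion of the \emph{full} $d_{cu}$-volume along $E^{cu}$: from the elementary inequality $|\det A|\ge\mathfrak m_2(A)^{d_{cu}/2}$ one has $\log|\det(Df\mid_{E^{cu}_y})^{-1}|\le\tfrac{d_{cu}}{2}\log\bigl\|\wedge^2(Df\mid_{E^{cu}_y})^{-1}\bigr\|$, so on $\Omega$ the Ces\`aro averages of $\log|\det(Df\mid_{E^{cu}})^{-1}|$ have $\limsup\le-\tfrac{d_{cu}}{4}c_0<0$. Since the flow direction sits inside $E^{cu}$ and is neutral, this volume growth is channelled through the normal sub-bundle $N^{cu}_x=E^{cu}_x\cap G(x)^{\perp}$ (the linear Poincar\'e flow), and one must control the angle between $G(x)$ and the remaining center-unstable directions, which degenerates as $x\to\sing_\Lambda(G)$.

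The next step applies the Pliss lemma along each orbit of $\Omega$ to produce, with positive lower density $\theta=\theta\bigl(c_0,\sup\|\wedge^2(Df\mid_{E^{cu}})^{-1}\|\bigr)$, a sequence of $\tfrac{c_0}{4}$-hyperbolic times $n_1<n_2<\dots$ at which $E^{cu}$-volume, hence (by the previous reduction and domination) the inner radius of small center-unstable disks, contracts exponentially backward along the orbit. Intersecting the hyperbolic times with the $(\delta,\epsilon)$-good times supplied by the slow recurrence condition~\eqref{eq:SR} --- still a set of positive density --- yields at such times $n$ a hyperbolic center-unstable pre-disk $V_n(x)\subset D$ with $f^n\colon V_n(x)\to B^{cu}(f^nx,\delta_1)$ a diffeomorphism onto a disk of uniform radius $\delta_1>0$ on which $f^n$ has bounded distortion. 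This distortion bound is the heart of the argument: the log-distortion of $f^n$ on $V_n(x)$ is dominated by $\sum_{j=0}^{n-1}e^{-\frac{c_0}{8}(n-j)}\kappa(f^jx)$, where $\kappa(y)$ combines the local H\"older constants of $\log|\det(Df\mid_{E^{cu}})|$ and of $y\mapsto E^{cu}_y$ and blows up like $\dist(y,\sing_\Lambda(G))^{-\beta}$ with a fixed $\beta$ (using $C^2$-regularity), and the slow recurrence hypothesis is precisely what makes the series converge uniformly. I expect this step --- merging hyperbolic times and slow recurrence in codimension larger than two, through the neutral flow direction --- to be the main obstacle: the codimension-two estimates behind Theorems~\ref{thm:fABV} and~\ref{thm:physASH} and the classical nonsingular distortion bounds have to be reconciled here.

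Given bounded-distortion hyperbolic pre-disks, the rest is routine. Push forward the normalised volume $\m_D|_{\Omega\cap D}$ of a good disk, average in time, and extract a weak$^*$ accumulation point $\mu$ of $\tfrac1n\sum_{j<n}(f^j)_*\bigl(\m_D|_{\Omega\cap D}\bigr)$; restricting the averaging to hyperbolic times and using bounded distortion shows the conditional measures of $\mu$ along center-unstable disks are absolutely continuous with densities bounded above and below, so $\mu$ is an SRB measure for $f$ supported in $\Lambda$. Uniform contraction along $E^s$ plus absolute continuity of the stable lamination (from $C^2$-regularity) gives $\m(B(\mu))>0$ by a Fubini argument, so $\mu$ is physical, its ergodic components are again physical/SRB, and --- since each ergodic SRB basin contains, up to a $\m$-null set, a whole center-unstable disk of radius $\delta_1$, of which only finitely many pairwise disjoint ones fit in the compact $M$ --- there are finitely many ergodic physical/SRB measures $\mu_1,\dots,\mu_p$. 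Their basins cover $\m$-a.e.\ point of $\Omega$ because $\m$-a.e.\ $x\in\Omega$ has infinitely many hyperbolic-and-good times, so every accumulation point of $\tfrac1n\sum_{j<n}\delta_{f^jx}$ inherits the bounded-distortion property and hence is one of the finitely many ergodic SRB measures, and a Lebesgue-density argument pins the Birkhoff average down to a single $\mu_i$. Finally I would transfer from $f$ to the flow by saturation $\mu_i\mapsto\int_0^1(\phi_t)_*\mu_i\,dt$, which preserves physicality, ergodicity and $\m$-null sets --- the measures obtained have positive central Lyapunov exponents, so the pathologies attached to neutral periodic orbits do not occur.

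For the converse, assume finitely many ergodic physical/SRB measures $\mu_1,\dots,\mu_p$ have basins covering $\m$-a.e.\ $x\in\Omega$. This is essentially the reverse direction of the codimension-two statement Theorem~\ref{thm:fABV} combined with the basin-covering hypothesis: each $\mu_i$ is hyperbolic with absolutely continuous conditionals along center-unstable disks, hence has positive central exponents, and the standing slow recurrence hypothesis forces the Birkhoff average of the one-step cocycle $\log\|\wedge^2(Df\mid_{E^{cu}})^{-1}\|$ to agree (in sign) with the asymptotic $2$-volume growth rate, so $\int\log\|\wedge^2(Df\mid_{E^{cu}})^{-1}\|\,d\mu_i<0$. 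Since $\m$-a.e.\ $x\in\Omega$ lies in some $B(\mu_i)$ and, for these measures, discrete and continuous basins agree up to $\m$-null sets, we get $\limsup_n\tfrac1n\sum_{j<n}\log\|\wedge^2(Df\mid_{E^{cu}_{f^jx}})^{-1}\|<0$ for $\m$-a.e.\ $x\in\Omega$; taking $c_0:=\min_i\bigl(-\int\log\|\wedge^2(Df\mid_{E^{cu}})^{-1}\|\,d\mu_i\bigr)>0$ yields NU2SE on $\Omega$. This direction is considerably softer than the direct one, slow recurrence playing there only a bookkeeping role.
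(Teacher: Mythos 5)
First, note that the paper does not prove this statement at all: it is quoted verbatim from \cite[Theorem B]{ArSal25}, so the only possible comparison is with that reference. Your forward implication follows the same route that reference takes --- Pliss/hyperbolic times for the time-one map, slow recurrence to tame the distortion blow-up near $\sing_\Lambda(G)$, uniform-size backward-contracted $cu$-disks, absolutely continuous conditionals for accumulation points of averaged push-forwards, and the disjoint-disk count for finiteness --- and that outline is sound. One caveat: your ``purely linear-algebraic reduction'' from NU2SE to non-uniform expansion of the full $d_{cu}$-volume, while a correct inequality, is not the reduction the argument actually needs. Volume expansion along $E^{cu}$ does not by itself produce backward-contracted $cu$-disks when $d_{cu}>2$ (some interior direction could still be contracted). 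What makes the machinery run is that $\|\wedge^2(Df\mid_{E^{cu}})^{-1}\|$ controls the product of the \emph{two smallest} singular values, one of which is pinned near $1$ by the neutral flow direction, so NU2SE forces non-uniform expansion of \emph{every} direction of $E^{cu}$ transverse to $G$ (this is \cite[Theorem 1.6]{ArSal25}, invoked in Remark~\ref{rmk:nu2se-ase}); you gesture at the normal bundle but the volume reduction you foreground is a red herring.

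The genuine gap is in your converse. From positivity of the central Lyapunov exponents of an ergodic SRB measure $\mu_i$ you obtain, via Oseledets/Kingman, that $\lim_n\frac1n\log\|\wedge^2(Df^n\mid_{E^{cu}_x})^{-1}\|=-(\chi_{d-1}+\chi_d)<0$; but NU2SE is a statement about the Birkhoff average of the \emph{one-step} cocycle $\log\|\wedge^2(Df\mid_{E^{cu}})^{-1}\|$, and submultiplicativity only gives
\begin{align*}
  \frac1n\sum\nolimits_{j=0}^{n-1}\log\big\|\wedge^2(Df\mid_{E^{cu}_{f^jx}})^{-1}\big\|
  \;\ge\;
  \frac1n\log\big\|\wedge^2(Df^n\mid_{E^{cu}_x})^{-1}\big\|,
\end{align*}
i.e.\ the one-step average is bounded \emph{below}, not above, by the asymptotic rate, so it could well be non-negative even when all transverse exponents are positive. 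Your claim that slow recurrence ``forces the Birkhoff average of the one-step cocycle to agree (in sign) with the asymptotic $2$-volume growth rate'' is exactly the implication ASE $\Rightarrow$ NU2SE, which this very paper isolates as the open Conjecture~\ref{conj:ase-nu2se}; it cannot be waved through as bookkeeping. The correct reverse implication must exploit the SRB structure itself (absolutely continuous conditionals along unstable disks and bounded distortion at hyperbolic times, which let one integrate the one-step cocycle against disk volume), not merely the sign of the exponents. As written, this half of your proof does not close.
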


\begin{remark}[NU2SE implies ASE]
  \label{rmk:nu2se-ase}
  According to \cite[Theorem 1.6]{ArSal25}, each trajectory not
  converging to a singularity and satisfying condition (NU2SE), also
  satisfies (ASE).
\end{remark}

\subsection{Physical measures for higher co-dimensional weak ASH
  attracting sets}
\label{sec:lorenz-like-rovella}

We obtain a necessary and sufficient condition for existence of a
physical/SRB measure for wASH attracting sets with arbitrary
codimension.

Let $U\subset M$ be a forward invariant set of $M$ for the flow of a
$C^1$ vector field $G$, where all equilibria are hyperbolic. We say
that $G$ has
  \begin{description}
  \item[weak slow recurrence (wSR)] if, on the positive Lebesgue
    measure subset $\Omega\subset U$, for every $\epsilon>0$, we can
    find $r>0$ so that
\begin{align}
  \label{eq:wSR}
  \limsup_{n\nearrow\infty}\frac1n\sum\nolimits_{i=0}^{n-1}
  \delta_{f^ix}(B_r(\sing_\Lambda(G)))<\epsilon, \quad x\in\Omega.
\end{align}
  \end{description}

  Clearly the slow recurrence (SR) condition implies the weak slow
  recurrence (wSR) condition.
  
  \begin{remark}[no atoms at equilibria]
    \label{rmk:noatom}
    If $x\in U$ satisfies~\eqref{eq:wSR}, then any $f$-invariant
    probability measure $\mu$ obtained as a weak$^*$ accumulation of the
    empirical measures $\Big(\frac1n\sum_{i=0}^{n-1}
    \delta_{f^ix}\Big)_{n\ge1}$ does not admit the elements of
    $\sing_\Lambda(G)$ as atoms: $\mu(\sing_\Lambda(G))=0$.
  \end{remark}

  \begin{maintheorem}[Physical measure for higher co-dimensional weak ASH]
    \label{mthm:hdASH}
    Let a $C^2$ vector field $G$ on $M$ and a trapping region $U$ be
    given containing a partially hyperbolic attracting set
    $\Lambda=\Lambda_G(U)$ so that every $x\in\Lambda$ not converging
    to any equilibrium satisfies wNU2SE.  We assume that $\Lambda$
    contains only saddle-type hyperbolic equilibria.

    If either one of the next condition holds
    \begin{enumerate}[(A)]
    \item $|\det(Df\mid_{E^{cu}_\sigma})|\ge1$ for all
      $\sigma\in\sing_\Lambda(G)$; or
    \item there exists a positive volume subset $\Omega\subset U$ so
      that $x\in\Omega$ satisfies (wSR);
    \end{enumerate}
  then there exists a physical/SRB  measure supported on
  $\Lambda$.

  If, in addition,  $\Lambda$ is transitive, then $\Lambda$ supports a
  unique physical/SRB probability measure whose basin covers a
  neighborhood of $\Lambda$.
\end{maintheorem}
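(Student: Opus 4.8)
The plan is to reduce the statement to the setting handled by the $\mathrm{ABV}$-type construction behind Theorems~\ref{thm:fABV}, \ref{thm:physASH} and~\ref{thm:discretefabv}, namely: a trapping region carrying a dominated splitting, uniform contraction on $E^s$, non-uniform sectional expansion along $E^{cu}$ on a positive volume subset, and no mass leaking to the equilibria; the only genuinely new point is to guarantee the last item when $d_{cu}>2$, which is exactly where hypotheses (A) and (B) enter. \emph{Step 1 (reduction to a positive volume subset of $U$).} The set $\sing_\Lambda(G)$ is finite (hyperbolic equilibria are isolated and $\Lambda$ is compact) and, being of saddle type, each $W^s_\sigma$ is an immersed submanifold of positive codimension, so $\leb\big(\bigcup_\sigma W^s_\sigma\big)=0$. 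Since $U$ is a trapping region, $\omega(z)\subset\Lambda$ for all $z\in U$, hence for $\leb$-a.e.\ $z\in U$ the forward orbit avoids $\bigcup_\sigma W^s_\sigma$, does not converge to any equilibrium, and eventually remains in an arbitrarily small neighbourhood of $\Lambda$. Using the continuous extension $T_UM=E^s\oplus E^{cu}$, the invariant cone field $C^{cu}_a$ and the uniform continuity of $Df$ near $\Lambda$, the hypothesis that every non-equilibrium-converging point of $\Lambda$ satisfies wNU2SE~\eqref{eq:wNU2SE} transfers to $\leb$-a.e.\ $z\in U$; by the weak counterpart of Remark~\ref{rmk:nu2se-ase} such $z$ also satisfies wASE~\eqref{eq:wASE}. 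Thus~\eqref{eq:wNU2SE} and~\eqref{eq:wASE} hold on a full, in particular positive, volume set $\Omega\subset U$.

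\emph{Step 2 (production of the SRB measure).} Write $f=\phi_1$. For $\leb$-a.e.\ $z\in\Omega$ there are times $n_k(z)\nearrow\infty$ along which $\frac1{n_k}\sum_{i=0}^{n_k-1}\log\|\wedge^2(Df\mid_{E^{cu}_{f^iz}})^{-1}\|\le -c_0$; by the Pliss lemma, at each such $n_k$ a definite positive proportion of the backward times $0\le j\le n_k$ are \emph{hyperbolic}, i.e.\ $\prod_{\ell=j}^{n_k-1}\|\wedge^2(Df\mid_{E^{cu}_{f^\ell z}})^{-1}\|\le e^{-c_0(n_k-j)/2}$. Then the usual bounded-distortion argument applies to the pushed-forward measures $f^n_*(\leb|_U)$ — disks tangent to $C^{cu}_a$ are expanded with controlled distortion backwards from hyperbolic times — so any weak$^*$ accumulation point $\mu$ of the Cesàro averages $\frac1{n_k}\sum_{i=0}^{n_k-1}f^i_*(\leb|_U)$, along a subsequence realising the times $n_k$, is $f$-invariant, supported on $\Lambda$ (the maximal invariant set in $\overline{U}$), and has absolutely continuous conditional measures along the non-uniformly expanding directions, hence is an SRB measure; since positive upper density of hyperbolic times suffices here, the $\liminf$ nature of wNU2SE causes no loss. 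This step is valid once no ergodic component of $\mu$ is supported on $\sing_\Lambda(G)$, which is the content of Step~3.

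\emph{Step 3 (no mass at the equilibria, and conclusion).} Under (B) this follows from Remark~\ref{rmk:noatom}, applied along the defining subsequence. Under (A) one argues as for Lorenz-like singularities: after a $C^1$ linearisation near a saddle $\sigma$ and using the partial hyperbolicity of $\Lambda$ (so $E^s_\sigma$ and $E^{cu}_\sigma$ are $DG(\sigma)$-invariant), the inequality $|\det(Df\mid_{E^{cu}_\sigma})|\ge1$ means $\sum\Re(\text{eigenvalues of }DG(\sigma)\text{ on }E^{cu}_\sigma)\ge0$, i.e.\ the linearised flow does not contract $E^{cu}$-volume at $\sigma$; hence an orbit entering a small tube around $\sigma$ at center-unstable distance $\rho$ from $W^s_\sigma$ is expelled in time $O(-\log\rho)$ while its $E^{cu}$-volume is expanded by at least a definite factor, and a Fubini/bounded-distortion estimate on $U$ bounds, uniformly in $n$, the $\leb$-measure of the points still $r$-close to $\sigma$ after time $n$; letting $r\to0$ gives $\mu(\sing_\Lambda(G))=0$. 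By Poincar\'e recurrence $\mu\big(\bigcup_\sigma W^s_\sigma\big)=0$ as well, so $\mu$-a.e.\ point satisfies $\chi(x,v)\ge c_*>0$ for all $v\in E^{cu}_x\setminus(\RR\cdot G)$; thus each ergodic component of $\mu$ is a hyperbolic SRB measure carried by $\Lambda$, and there are finitely many. If $\Lambda$ is transitive, the basin of each such measure is open modulo a $\leb$-null set and forward saturated, and transitivity together with $\overline{\phi_t(U)}\subset U$ forces a single one whose basin contains a neighbourhood of $\Lambda$, exactly as in the codimension-two conclusion of Theorem~\ref{thm:physASH}. The flow version follows from the time-$1$ statement in the standard way.

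\emph{Main obstacle.} The crux is Step~3 in codimension $d_{cu}>2$: because $2$-sectional expansion is strictly weaker than $E^{cu}$-volume expansion, the control of orbits passing near the saddle equilibria — ruling out atoms there and preserving enough hyperbolic times — cannot be imported verbatim from the codimension-two arguments and must be re-derived from (A) or (B); a secondary difficulty is that the $\liminf$ in wNU2SE yields hyperbolic times only with positive upper density, which one must check is enough for the absolute continuity argument of Step~2.
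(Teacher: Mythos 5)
Your proposal follows a genuinely different route from the paper, and the route you chose has two gaps that are precisely the difficulties the paper's argument is designed to avoid. The paper does \emph{not} push forward Lebesgue measure and build absolutely continuous conditionals via hyperbolic times; it takes a Birkhoff-generic point $x$, forms a weak$^*$ limit $\mu$ of the empirical measures, applies the Generalized Pesin Inequality of Catsigeras--Cerminara--Enrich (Theorem~\ref{thm:GenPesin}) to get $h_\mu(f)\ge\int\log J^{cu}f\,d\mu$, obtains the reverse inequality from Ruelle's Inequality once all $E^{cu}$-exponents are shown nonnegative, and then invokes Ledrappier--Young to conclude that an ergodic component of $\mu$ is physical/SRB. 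Your Step~2 instead runs the ABV machinery, and there it breaks down: wNU2SE controls $\|\wedge^2(Df\mid_{E^{cu}})^{-1}\|$, i.e.\ expansion of $2$-dimensional areas, and Pliss times for this quantity do \emph{not} give backward contraction of $\|(Df\mid_{E^{cu}})^{-1}\|$ when $d_{cu}>2$ (a single direction may be contracted at every step while all $2$-planes still expand, since only the product of the two smallest singular values is controlled). Without vector expansion, $cu$-disks need not grow to uniform size and the bounded-distortion/absolute-continuity step has no foundation; moreover, without a slow recurrence hypothesis the distortion of the $cu$-Jacobian along blocks of iterates near the equilibria is uncontrolled --- this is exactly why Theorem~\ref{thm:discretefabv} assumes (SR), and why Theorem~\ref{mthm:hdASH} is proved by the entropy route instead.

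The second gap is your Step~3 under hypothesis (A). You claim that $|\det(Df\mid_{E^{cu}_\sigma})|\ge1$ forces $\mu(\sing_\Lambda(G))=0$ via an expulsion-time and Fubini estimate, but the determinant condition says nothing about how much time orbits accumulate near $\sigma$, and empirical measures can perfectly well charge a saddle whose central determinant is $\ge1$. The paper does not prove (and does not need) $\mu(\sing_\Lambda(G))=0$ under (A): it decomposes $\mu=\alpha\eta+\beta\xi$ into non-atomic and atomic parts, uses (A) together with $h_\xi(f)=0$ and $\xi(\log J^{cu}f)>0$ (equilibria contribute $\ge0$, periodic orbits in $\Lambda^*$ contribute $\ge c_*>0$) to rule out $\beta=1$ via the entropy inequality, and then runs the argument on the surviving non-atomic component $\eta$, which still satisfies $h_\eta(f)\ge\eta(\log J^{cu}f)$ by affinity of the entropy. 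Your treatment of (B) via Remark~\ref{rmk:noatom} does match the paper, and your Step~1 observation that generic orbits avoid the stable manifolds is used there too; but as written, Steps~2 and~3(A) would need to be replaced wholesale rather than repaired.
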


Since a physical/SRB measure $\mu$ supported on $\Lambda$ cannot have
atoms by definition, then we deduce the following.

\begin{maincorollary}[existence of physical/SRB and weak slow recurrence]
  \label{mcor:wSR-SRB}
  In the same setting of Theorem~\ref{mthm:hdASH}, there exists a
  physical/SRB measure $\mu$ supported on $\Lambda$ if, and only if,
  weak slow recurrence holds on a positive volume subset
  $\Omega\supset B(\mu)$.
\end{maincorollary}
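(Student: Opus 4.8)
The plan is to prove the equivalence in two steps, the substantial one being the ``only if'' direction. For ``if'', I would simply observe that (wSR) on a positive volume subset $\Omega\subset U$ is precisely hypothesis (B) of Theorem~\ref{mthm:hdASH}, which already yields a physical/SRB measure $\mu$ supported on $\Lambda$; and once the reverse implication is in hand, (wSR) also holds on $B(\mu)$, hence on $\Omega\cup B(\mu)\supset B(\mu)$ --- a finite union of (wSR) sets being again a (wSR) set, since for each $\epsilon$ one may take the smallest of the associated radii $r$ and shrinking $r$ only decreases the sums in~\eqref{eq:wSR}. So the whole content is the ``only if'' direction.

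For that direction, I would start from a physical/SRB measure $\mu$ supported on $\Lambda$. As already noted, $\mu$ has no atoms; since $\sing_\Lambda(G)$ is finite (the equilibria being hyperbolic, hence isolated) this gives $\mu(\sing_\Lambda(G))=0$, whence $\mu(B_\rho(\sing_\Lambda(G)))\searrow 0$ as $\rho\searrow0$ by continuity from above. I would then check (wSR) on $\Omega:=B(\mu)$, which has positive volume because $\mu$ is physical (one may intersect with $U$ if one insists on $\Omega\subset U$; the argument below does not use $x\in U$). The one genuine obstacle is that membership in the \emph{flow} basin $B(\mu)$ controls only the time-$T$ \emph{integral} $\frac1T\int_0^T\psi(\phi_tx)\,dt$ of a continuous $\psi$, and \emph{not} the discrete counting sums of~\eqref{eq:wSR}: for a general flow the discrete Birkhoff sums $\frac1n\sum_i\psi(\phi_ix)$ are not controlled by $\int\psi\,d\mu$ (think of the rotation flow on $S^1$). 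To bridge this I would use that the flow is slow near the equilibria: as each $\sigma\in\sing_\Lambda(G)$ is hyperbolic, there are $C_*,r_0>0$ with $\|G(y)\|\le C_*\,d(y,\sing_\Lambda(G))$ whenever $d(y,\sing_\Lambda(G))<2r_0$, so for $0<r<r_0$, whenever $\phi_i(x)\in B_r(\sing_\Lambda(G))$ the trajectory --- moving at speed $\le 2C_*r$ inside $B_{2r}(\sing_\Lambda(G))$ --- needs time more than $\tau_0:=\min\{1/(3C_*),1/3\}$ to cover the distance $\ge r$ required to exit that neighbourhood; hence $\phi_t(x)\in B_{2r}(\sing_\Lambda(G))$ for all $|t-i|\le\tau_0$, a dwell time independent of $r$ and of $x$.

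With this in hand the rest is routine. Since $\tau_0<1/2$, the windows $[i-\tau_0,i+\tau_0]$ centred at distinct integers are pairwise disjoint and lie in $[0,n]$ for $1\le i\le n-1$; comparing the union of those windows indexed by the $i$'s with $\phi_i(x)\in B_r(\sing_\Lambda(G))$ against the set $\{t\in[0,n]:\phi_t(x)\in B_{2r}(\sing_\Lambda(G))\}$ yields
\begin{align*}
  \frac1n\sum_{i=0}^{n-1}\delta_{f^ix}\big(B_r(\sing_\Lambda(G))\big)
  \ \le\ \frac1n+\frac1{2\tau_0\,n}\int_0^n\delta_{\phi_t(x)}\big(B_{2r}(\sing_\Lambda(G))\big)\,dt .
\end{align*}
Letting $n\to\infty$, using $x\in B(\mu)$ together with the Portmanteau inequality for the closed set $\overline{B_{2r}(\sing_\Lambda(G))}\subset B_{3r}(\sing_\Lambda(G))$, I would obtain
\begin{align*}
  \limsup_{n\to\infty}\ \frac1n\sum_{i=0}^{n-1}\delta_{f^ix}\big(B_r(\sing_\Lambda(G))\big)
  \ \le\ \frac1{2\tau_0}\,\mu\big(B_{3r}(\sing_\Lambda(G))\big).
\end{align*}
Given $\epsilon>0$ I would then pick $r\in(0,r_0)$ with $\mu(B_{3r}(\sing_\Lambda(G)))<2\tau_0\,\epsilon$; this $r$ depends only on $\epsilon$, so the bound holds uniformly for every $x\in\Omega$, which is exactly (wSR) on $\Omega\supset B(\mu)$. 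The only subtle ingredient is the uniform dwell time $\tau_0$ of the second paragraph, which converts ``number of discrete near-equilibrium visits'' into ``fraction of flow-time spent near $\sing_\Lambda(G)$''; the displayed estimates and the final choice of $r$ are soft.
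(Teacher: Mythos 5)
Your proof is correct, and it follows the same core idea the paper uses: the paper derives the corollary from the single observation that a physical/SRB measure has no atoms, hence $\mu(\sing_\Lambda(G))=0$, so that for $x\in B(\mu)$ the empirical measures assign vanishing asymptotic mass to small neighbourhoods of the equilibria. What you add --- and what the paper's one-line deduction silently elides --- is the passage from the \emph{continuous-time} convergence $\frac1T\int_0^T\delta_{\phi_tx}\,dt\to\mu$ defining $B(\mu)$ to the \emph{discrete} counting sums in~\eqref{eq:wSR}. Your uniform dwell-time bound (each discrete visit to $B_r(\sing_\Lambda(G))$ forces the flow to spend a time $\ge 2\tau_0$, independent of $r$, inside $B_{2r}(\sing_\Lambda(G))$, because $\|G(y)\|\le C_*\,d(y,\sing_\Lambda(G))$ near the zeros of $G$) is exactly the right bridge, and the subsequent Portmanteau estimate and choice of $r$ are sound. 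The ``if'' direction via hypothesis (B) of Theorem~\ref{mthm:hdASH}, and the enlargement of $\Omega$ to contain $B(\mu)$ by taking the minimum of the two radii, are also fine. In short: same approach as the paper, but you supply a genuinely needed technical step that the paper leaves implicit.
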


\subsection{New examples of attractors}
\label{sec:new-exampl-attract}

We distinguish between the following types of hyperbolic singularities
for flows.

\subsubsection{Generalized Lorenz-like singularities}
\label{sec:lorenz-like-singul}

We say that a singularity $\sigma$ belonging to a partially hyperbolic
set is \emph{generalized Lorenz-like} if $DG\mid_{E^{cu}_\sigma}$ has
a real eigenvalue $\lambda^s$ and
$\lambda^u=\inf\{\Re(\lambda):\lambda\in\spec(DG\mid_{E^{cu}_\sigma}),
\Re(\lambda)\ge0\}$ satisfies $-\lambda^u<\lambda^s<0<\lambda^u$.

This is a natural condition for singularities contained in partially
hyperbolic sets for flows, because of the following.

\begin{proposition}{\cite[Proposition 2.1]{araujo_2021}}
  \label{prop:generaLorenzlike} Let
  $\Lambda$ be a sectional hyperbolic attracting set and let
  $\sigma\in\Lambda$ be an equilibrium.
  If there exists $x\in\Lambda\setminus\{\sigma\}$ so that
  $\sigma\in\omega(x)\cup\alpha(x)$, then $\sigma$ is
  \emph{generalized Lorenz-like}.
\end{proposition}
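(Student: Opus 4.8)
The plan is to isolate a purely linear‑algebraic statement about $DG(\sigma)$ on the invariant subspaces $E^s_\sigma$ and $E^{cu}_\sigma$, and then to supply the one dynamical input it needs.

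\emph{Linear reduction.} Since $\sigma\in\Lambda$, the subspaces $E^s_\sigma$ and $E^{cu}_\sigma$ are invariant under $D\phi_t(\sigma)=e^{t\,DG(\sigma)}$, hence under $A:=DG(\sigma)$, and $T_\sigma M=E^s_\sigma\oplus E^{cu}_\sigma$. Uniform contraction along $E^s$ gives $\spec(A|_{E^s_\sigma})\subset\{\Re<0\}$, while hyperbolicity of $\sigma$ gives $\spec(A)\cap i\RR=\varnothing$. Evaluating sectional expansion of $E^{cu}$ at the point $\sigma$ on $2$‑planes spanned by real or generalized eigenvectors of $A|_{E^{cu}_\sigma}$ yields that the two smallest real parts $r_1\le r_2$ among the eigenvalues of $A|_{E^{cu}_\sigma}$ (counted with multiplicity) satisfy $r_1+r_2\ge\theta>0$. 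In particular $r_2>0$, so $\lambda^u=\inf\{\Re\lambda:\lambda\in\spec(A|_{E^{cu}_\sigma}),\ \Re\lambda\ge0\}=r_2>0$ is well defined; $A|_{E^{cu}_\sigma}$ has at most one eigenvalue with $\Re<0$ (two would force $r_1,r_2<0$); and such an eigenvalue is necessarily real, since a complex conjugate pair with negative real part would again give $r_1=r_2<0$. Hence, once we know this negative eigenvalue exists, writing $\lambda^s:=r_1$ turns $r_1+r_2>0$ into precisely $-\lambda^u<\lambda^s<0<\lambda^u$, so $\sigma$ is generalized Lorenz‑like. The statement thus reduces to: \emph{$A|_{E^{cu}_\sigma}$ has an eigenvalue of negative real part}, i.e.\ $\sigma$ is not a source along the center‑unstable direction.

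\emph{Dynamical core.} Suppose, for contradiction, that $\spec(A|_{E^{cu}_\sigma})\subset\{\Re>0\}$. Then the hyperbolic splitting of $\sigma$ is $E^{ss}_\sigma=E^s_\sigma$, $E^{uu}_\sigma=E^{cu}_\sigma$, so $W^s(\sigma)=W^{ss}(\sigma)$ is an immersed submanifold of dimension $d_s$ tangent to $E^s_\sigma$ at $\sigma$, and $W^u(\sigma)$ is tangent to $E^{cu}_\sigma$ at $\sigma$. The key obstruction is that no regular orbit of $\Lambda$ can \emph{converge} to $\sigma$ (forward or backward): such an orbit would converge along $W^{ss}(\sigma)$, so the normalized tangent vectors to it at points tending to $\sigma$ would converge to a direction in $E^s_\sigma$ by tangency of $W^{ss}(\sigma)$; but these tangent vectors are the flow direction $G$, which lies in $E^{cu}$ at every point of $\Lambda$, and $E^{cu}$ is continuous, so the limiting direction lies in $E^{cu}_\sigma$ too, contradicting $E^s_\sigma\cap E^{cu}_\sigma=\{0\}$. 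In particular $W^s(\sigma)\cap\Lambda=\{\sigma\}$, so the point $x$ of the hypothesis does not converge to $\sigma$ and must accumulate it through infinitely many excursions across a fixed small sphere $\partial B_\rho(\sigma)$, with $B_\rho(\sigma)$ free of other equilibria, reaching depths tending to $0$. A standard estimate near the hyperbolic saddle $\sigma$ --- using a $C^1$ linearization, or the local invariant manifolds and the inclination lemma --- shows that the entry points on $\partial B_\rho(\sigma)$ of the deeper and deeper excursions accumulate on $W^{ss}_{\mathrm{loc}}(\sigma)\cap\partial B_\rho(\sigma)$ when $\sigma\in\omega(x)$ (respectively, the past‑exit points accumulate there when $\sigma\in\alpha(x)$, since $W^{ss}(\sigma)=W^s(\sigma)$ is the past‑unstable manifold of $\sigma$). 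Any accumulation point so obtained lies in $W^{ss}_{\mathrm{loc}}(\sigma)\cap\Lambda$, is different from $\sigma$, and has forward orbit converging to $\sigma$ along $W^{ss}(\sigma)$ --- contradicting the key obstruction, which would complete the argument.

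\emph{Main obstacle.} I expect the hard part to be this last reduction, including the one configuration it leaves out: when $x$ lies on $W^u(\sigma)$ with $\alpha(x)=\{\sigma\}$, so that no excursion occurs. For that case one cannot use a re‑entry estimate; one must instead exploit that, in the ``source along $E^{cu}$'' case, $W^u(\sigma)\subseteq\Lambda$ (unstable manifolds of points of an attracting set remain in it) is a $d_{cu}$‑dimensional invariant submanifold everywhere tangent to the center‑unstable cone $C^{cu}_a$ --- because $T_\sigma W^u(\sigma)=E^{cu}_\sigma\subset C^{cu}_a(\sigma)$ and $C^{cu}_a$ is forward invariant along orbits in $U$ --- hence with uniformly area‑expanding $2$‑planes, yet with compact closure contained in $\Lambda$. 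Turning this into a contradiction with the Hyperbolic Lemma (every equilibrium‑free compact invariant subset of $\Lambda$ is uniformly hyperbolic, so only a $(d_{cu}-1)$‑dimensional uniform expansion is available transversally to the flow on such sets) requires a cross‑section and recurrence argument on $\overline{W^u(\sigma)}$; pinning down this incompatibility is where the genuine work lies.
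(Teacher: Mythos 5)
The paper does not prove this statement; it only quotes \cite{araujo_2021}, so your attempt can only be judged against the standard argument. Your linear reduction is correct: sectional expansion evaluated at $\sigma$ on suitable $2$-planes forces the sum of the two smallest real parts of $\spec(DG\mid_{E^{cu}_\sigma})$ to be at least $\theta>0$, so $DG\mid_{E^{cu}_\sigma}$ has at most one eigenvalue of negative real part, that eigenvalue is real if it exists, and its existence is the only thing left to prove. Your ``key obstruction'' is also correct and is the standard mechanism: in the putative source-along-$E^{cu}$ case no regular orbit of $\Lambda$ can converge forward to $\sigma$, because the flow direction lies in the continuous bundle $E^{cu}$ yet would have to accumulate on $T_\sigma W^s(\sigma)=E^s_\sigma$, which meets $E^{cu}_\sigma$ only at $0$. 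The excursion estimate reducing non-convergent accumulation (forward or backward) to a point of $\Lambda\cap W^s_{\mathrm{loc}}(\sigma)\setminus\{\sigma\}$ is likewise fine.

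The genuine gap is the case you flag and do not close: $x\in W^u(\sigma)$ with $\alpha(x)=\{\sigma\}$. This is not a residual configuration --- for the $\alpha$-limit half of the hypothesis it is the whole content of the proposition. If the source scenario occurred at all, $W^u(\sigma)$ would be a $d_{cu}$-dimensional manifold automatically contained in the attracting set, every point of which satisfies the hypothesis with $\alpha(x)=\{\sigma\}$; and your own arguments show that in that scenario $\sigma$ can be approached from within $\Lambda$ \emph{only} through $W^u(\sigma)$. So excluding this case is precisely what must be proved, and it cannot be treated as an afterthought. Moreover, the route you sketch does not visibly produce a contradiction: a $d_{cu}$-dimensional invariant manifold everywhere tangent to $E^{cu}$, with $2$-planes uniformly area-expanded, accumulating on an equilibrium-free compact invariant subset is entirely compatible with the Hyperbolic Lemma, because on such a subset $E^{cu}=(\RR\cdot G)\oplus E^u$ with $\dim E^u=d_{cu}-1$, its flow-saturated unstable manifold is again $d_{cu}$-dimensional, and the inclination lemma only tells you that $W^u(\sigma)$ accumulates on it. The neutral flow direction absorbs the apparent dimension mismatch, so no incompatibility arises from counting expanding directions. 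Closing this case therefore requires an input your proposal does not supply, and as written the proof establishes the conclusion only under the stronger hypothesis that $\sigma\in\omega(x)$, or that $\sigma\in\alpha(x)$ for some $x\notin W^u(\sigma)$.
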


We note that generalized Lorenz-like singularities are
sectional-expanding by definition, since $\lambda^s+\lambda^u>0$.

\subsubsection{Generalized Rovella-like singularities}
\label{sec:rovella-like-singul}

In the contracting Lorenz attractor, also known as Rovella attractor,
the singularity is sectionally contracting; see~\cite{Ro93}. We
extend this property to a partially hyperbolic setting with any
central-unstable dimension as follows.

We say that a singularity $\sigma$ belonging to a partially hyperbolic
set is \emph{generalized Rovella-like} if $DG\mid_{E^{cu}_\sigma}$ has
a real eigenvalue $\lambda^s$ and
$\lambda^u=\inf\{\Re(\lambda):\lambda\in\spec(DG\mid_{E^{cu}_\sigma}),
\Re(\lambda)\ge0\}$ satisfies $\lambda^s<-\lambda^u<0$.

\begin{remark}[stable index of the singularties]
  \label{rmk:stabindex}
  In both generalized Lorenz-like or Rovella-like cases, it is clear
  that the (stable) index of $\sigma$ is $\dim E^s_\sigma=d_s+1$.
\end{remark}

\subsubsection{Description of the examples}
\label{sec:description-examples}

We adapt the construction of the multidimensional Lorenz attractor,
first presented by Bonatti, Pumari\~no and Viana in~\cite{BPV97}, to
obtain the following examples:
\begin{itemize}
\item in Subsection~\ref{sec:multid-asympt-sectio}: an ASH attractor
  with $d_{cu}=2$ and non-sectional hyperbolic (``sectionally
  neutral'') equilibria type; and
\item in Subsection~\ref{sec:multid-ash-attract}: we adapt the
  previous example to obtain an ASH attractor with equilibria of mixed
  type: both Lorenz-like (sectionally expanding) and Rovella-like
  (sectionally contracting) equilibria in a transitive set.
\item in Subsection~\ref{sec:multidimesional-ash}: we extend the
  previous example to obtain ASH attractors with \emph{any given
    central-unstable dimension $d_{cu}>2$} and a pair of equilibria of
  either non-sectional-hyperbolic type, or mixed type.
\item in Subsection~\ref{sec:multid-ash-}: we obtain a partially
  hyperbolic NU2SE attractor with \emph{three-dimensional
    center-unstable bundle} $d_{cu}=3$ and hyperbolic equilibria which
  are non-sectional hyperbolic; and
\item in Subsection~\ref{sec:section-hyperb-equil}: we adapt the
  previous example so that the equilibria are again of mixed-type:
  generalized Lorenz-like and generalized Rovella-like.
\item in Section~\ref{sec:3-section-expans}: we build partially
  hyperbolic attractors which are asymptotic $p$-sectional-hyperbolic
  and \emph{not non-uniformly $(p-1)$-sectional expanding}, for any
  $p>2$.
\end{itemize}

\begin{remark}[strong ASH]
  \label{rmk:strongASH}
  The examples obtained in Section~\ref{sec:multid-exampl} satisfy
  the wNU2SE condition for all trajectories of the ambient manifold
  not converging to the singularities -- which is a stronger form of
  wASE condition.
\end{remark}

\begin{remark}[asymptotic $p$-sectional expansion]
  \label{rmk:apASH}
  The examples obtained in Section~\ref{sec:3-section-expans} satisfy
  the following for $p>2$
  \begin{description}
  \item[weak non-uniform $p$-sectional expansion (wNUpSE)] there
    exists $c_0>0$ so that
\begin{align}\label{eq:NUpSE}
\liminf_{n\nearrow\infty}\frac1n\sum\nolimits_{i=0}^{n-1}
  \log
  \big\|\wedge^p (Df\mid_{E^{cu}_{f^i x}})^{-1}\big\|
  \le - c_0
\end{align}
for all points $x$ in $M$ whose trajectories do not converge to
singularities.
\end{description}

A similar proof to~\cite[Theorem 1.6]{ArSal25} shows that the above
wNUpSE condition implies asymptotic $p$-sectional hyperbolicity (pASH)
as defined in Subsection~\ref{sec:asympt-p-section}.
\end{remark}

\subsection{Comments and conjectures}
\label{sec:comments-conjectures}

Sets satisfying sectional-hyperbolicity or asymptotic sectional
hyperbolicity also satisfy the Hyperbolic
Lemma~\ref{le:hyplemma}. This desirable property does not extend to
$p$-sectional hyperbolic attractors, as the Shilnikov-Turaev wild
attractor example~\cite{ST98} shows.

As we note in Remarks~\ref{rmk:nu2se-ase} and~\ref{rmk:apASH}, it is
known that wNU2SE condition implies wASE for all trajectories not
converging to a singularity, as well as the corresponding
$p$-sectional version.

\begin{conjecture}\label{conj:ase-nu2se}
  A partially hyperbolic compact invariant set for a smooth flow
  satisfying the ASE condition (pASE) also satisfies the NU2SE (NUpSE)
  condition.
\end{conjecture}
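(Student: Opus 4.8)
Since the reverse implications (NU2SE $\Rightarrow$ ASE, wNUpSE $\Rightarrow$ pASH) are already available --- they are \cite[Theorem 1.6]{ArSal25}, used in Subsection~\ref{sec:asympt-singul-hyperb} and Remarks~\ref{rmk:nu2se-ase} and~\ref{rmk:apASH} --- the content of Conjecture~\ref{conj:ase-nu2se} is the forward implication, and the plan is to route it through invariant measures and then localise near the equilibria with the Hyperbolic Lemma. Set $a_n(x)=\log\|\wedge^2(D\phi_n|_{E^{cu}_x})^{-1}\|$, a continuous function that is bounded on $\Lambda$ and subadditive over $f=\phi_1$ (since $\|\wedge^2(AB)^{-1}\|\le\|\wedge^2 A^{-1}\|\cdot\|\wedge^2 B^{-1}\|$), with $-a_n(x)$ equal to the logarithm of the least $2$-dimensional volume expansion of $D\phi_n$ on $E^{cu}_x$. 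For $x$ in the trapping region $U$ the empirical measures $\frac1n\sum_{i<n}\delta_{f^ix}$ accumulate weak$^*$ on $f$-invariant probabilities supported on $\Lambda$, and by continuity of $a_1$ one has $\limsup_n\frac1n\sum_{i<n}a_1(f^ix)\le\sup\{\int a_1\,d\nu:\nu\text{ a limit measure of the orbit of }x\}$; so it suffices to exhibit $c_0>0$ and a positive-volume set of $x$ whose limit measures $\nu$ all satisfy $\int a_1\,d\nu\le-c_0$.

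For an ergodic $\nu$ on $\Lambda$ which is not a point mass at an equilibrium --- so that $\nu(W^s_\sigma)=0$ for every $\sigma$ --- the ASE hypothesis and \cite[Theorem 1.6]{ArSal25} force every Lyapunov exponent of $D\phi_t|_{E^{cu}}$ along a direction transverse to the flow to be $\ge c_*$, the flow direction carrying exponent $0$. Hence by the multiplicative ergodic theorem $\lim_n\frac1n(-a_n(x))$ equals the sum of the two smallest $E^{cu}$-exponents $=\chi\ge c_*$ for $\nu$-a.e.\ $x$, and by Kingman's subadditive theorem $\int a_\infty\,d\nu\le-c_*$ where $a_\infty=\lim_n\frac1n a_n$. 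If $d_{cu}=2$ this already settles non-atomic $\nu$, since then $a_1$ is the \emph{additive} cocycle $-\log|\det(D\phi_1|_{E^{cu}})|$ and $\int a_1\,d\nu=\int a_\infty\,d\nu\le-c_*$. For the equilibria themselves we invoke the singularity types: at a generalized Lorenz-like $\sigma$ the bundle $E^{cu}_\sigma$ is sectionally expanding, so $a_1(\sigma)<0$; at a generalized Rovella-like $\sigma$ one has $\lambda^s+\lambda^u<0$ and hence $a_1(\sigma)>0$. This is precisely why hypothesis (A) of Theorem~\ref{mthm:hdASH} suffices (no bad equilibria), while otherwise one needs a recurrence hypothesis of type (B): by Remark~\ref{rmk:noatom} weak slow recurrence makes the equilibria non-atomic for every limit measure and permits discarding, up to an $\epsilon$-fraction of time, the orbit segments inside a small $B_\rho(\sing_\Lambda(G))$.

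To control the center-unstable directions \emph{away} from the equilibria in codimension $>2$, apply the Hyperbolic Lemma~\ref{le:hyplemma}: for small $\rho>0$ the set $\Lambda_\rho$ of points of $\Lambda$ whose full orbit avoids $B_\rho(\sing_\Lambda(G))$ is compact, invariant and equilibrium-free, hence uniformly hyperbolic, and so admits an adapted metric in which $\|\wedge^2(D\phi_1|_{E^{cu}_x})^{-1}\|\le e^{-c_0}<1$ on $\Lambda_\rho$ (the least-expanded $2$-plane in $E^{cu}=\RR\cdot G\oplus E^u$ is spanned by the flow direction and the slowest $E^u$-direction, whose area grows at the uniform $E^u$-rate once the flow-direction and angle factors are normalised). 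A standard excursion / hyperbolic-times argument, as in the proofs of Theorems~\ref{thm:physASH} and~\ref{thm:discretefabv}, then upgrades this to: if the forward orbit of $x$ makes asymptotic frequency of visits to $B_\rho(\sing_\Lambda(G))$ at most $\epsilon$, then $\limsup_n\frac1n\sum_{i<n}a_1(f^ix)\le-(1-\epsilon)c_0+\epsilon\|a_1\|_{C^0}<0$. Under hypothesis (A) this holds on a full-volume subset of the basin; under a weak slow recurrence hypothesis it holds on the corresponding set $\Omega$. For the $p$-sectional statement one repeats everything with $\wedge^2$ and ``two smallest'' replaced by $\wedge^p$ and ``$p$ smallest'', with the caveat that the Hyperbolic Lemma fails for $p>2$ (Shilnikov--Turaev), so the step on $\Lambda_\rho$ must instead be carried out non-uniformly, directly from wNUpSE along the trajectories lying in $\Lambda_\rho$.

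The main obstacle is the gap --- appearing in codimension $>2$ and concentrated near the equilibria --- between what ASE controls and what NU2SE sums: ASE bounds only the \emph{asymptotic} least-area rate $\int a_\infty\,d\nu$ (equivalently $\limsup_T\frac1T\log m_2(D\phi_T|_{E^{cu}_x})$, attained along plane-dependent subsequences), whereas NU2SE is the Birkhoff average of the \emph{one-step} worst-$2$-plane functional $a_1$; subadditivity gives $\int a_\infty\,d\nu\le\int a_1\,d\nu$, the wrong direction, and passing to powers $\phi_k$ only reproduces the same inequality. Off the equilibria the Hyperbolic Lemma closes this gap (uniform hyperbolicity forces the two to agree up to the metric), so the real difficulty sits at the equilibria --- and there, as Corollary~\ref{mcor:wSR-SRB} makes explicit, establishing NU2SE on a positive-volume set is essentially equivalent to producing the physical/SRB measure one is ultimately after. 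Consequently the conjecture appears provable only in conjunction with a slow-recurrence statement, and a fully unconditional proof would in particular yield the existence of physical measures for all ASH (resp.\ pASH) attracting sets.
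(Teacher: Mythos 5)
The statement you set out to prove is Conjecture~\ref{conj:ase-nu2se}: the paper offers \emph{no proof} of it and explicitly lists it among the open problems in the ``Comments and conjectures'' subsection, so there is nothing in the paper to compare your argument against. Judged on its own terms, your proposal is not a proof, and to your credit your final paragraph concedes as much. The irreducible gap is exactly the one you isolate. Writing $a_n(x)=\log\|\wedge^2(D\phi_n|_{E^{cu}_x})^{-1}\|$, the ASE hypothesis (via Oseledets and Kingman applied to an ergodic limit measure $\nu$) controls only the subadditive limit $a_\infty=\lim_n\frac1n a_n\le -c_*$, whereas NU2SE asks for the Birkhoff average of the one-step functional $a_1=\psi^{cu}$. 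Subadditivity gives $\int a_\infty\,d\nu\le\int a_1\,d\nu$, which is the wrong direction, and nothing in ASE rules out an invariant measure whose least-expanded $2$-plane rotates from step to step so that $\int\psi^{cu}\,d\nu>0$ even though every individual plane is asymptotically expanded. For $d_{cu}=2$ the functional is additive and the gap disappears, which is precisely why Theorem~\ref{thm:fABV} is a theorem while this statement is a conjecture; your proposal does not supply the missing mechanism for $d_{cu}>2$.

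Two further steps in your sketch would need repair even granting a slow-recurrence hypothesis. First, the Hyperbolic Lemma~\ref{le:hyplemma} gives a uniform bound $\psi^{cu}\le -c_0$ (in an adapted metric) only on the maximal invariant set $\Lambda_\rho$ whose \emph{full} orbit avoids $B_\rho(\sing_\Lambda(G))$, not on $\Lambda\setminus B_\rho$; a point currently outside $B_\rho$ whose orbit later enters $B_\rho$ need not satisfy the one-step bound, so the excursion estimate $\limsup_n\frac1n\sum_{i<n}a_1(f^ix)\le-(1-\epsilon)c_0+\epsilon\|a_1\|_{C^0}$ does not follow as written, and repairing it by passing to $a_N$ for large $N$ reintroduces the subadditivity problem. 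Second, for the $p$-sectional version you correctly note that the Hyperbolic Lemma fails for $p>2$ (Shilnikov--Turaev) and then propose to argue ``directly from wNUpSE along trajectories in $\Lambda_\rho$'' --- but wNUpSE is the conclusion you are trying to establish, so that step is circular. What your write-up does accomplish is a clear diagnosis of why the conjecture is hard, together with the correct observation (consistent with Corollary~\ref{mcor:wSR-SRB}) that an unconditional proof would subsume the existence of physical measures for all ASH attracting sets; that is useful commentary on the conjecture, not a proof of it.
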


Theorems~\ref{thm:fABV} and~\ref{thm:physASH}, together with recent
results from Burguet-Ovadia~\cite{BOvadia}, show that the existence of
physical/SRB measures should only depend on positive Lyapunov
exponents and not on slow recurrence.

\begin{conjecture}[physical/SRB without slow recurrence]
  \label{conj:physnoSR}
 In Theorem~\ref{thm:discretefabv} and Theorem~\ref{mthm:hdASH} the
 slow recurrence conditions are superfluous.
\end{conjecture}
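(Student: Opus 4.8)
The strategy is to run the hyperbolic-times-and-admissible-disks construction behind the co-dimension two result Theorem~\ref{thm:physASH} --- which already produces a physical/SRB measure from a weak, limsup-type expansion --- now in arbitrary co-dimension, using hypotheses (A) or (B) to supply the control of the dynamics near the equilibria that in Theorem~\ref{thm:discretefabv} comes from slow recurrence (SR). First I would pass from $\Lambda$ to a positive-volume subset of $U$. Since $\Lambda=\Lambda_G(U)$ is attracting, $\omega(z)\subset\Lambda$ for $\m$-a.e.\ $z\in U$, and since every equilibrium in $\Lambda$ is a hyperbolic saddle the set $\cup\{W^s_\sigma:\sigma\in\sing_\Lambda(G)\}$ is a finite union of proper immersed submanifolds, hence $\m$-null; so there is a full-$\m$-measure set $\widehat U\subset U$ of points whose forward orbit accumulates on $\Lambda$ without converging to any equilibrium. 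Using the continuity on $U$ of the extension of $E^{cu}$ and of $x\mapsto\log\|\wedge^2(Df\mid_{E^{cu}_x})^{-1}\|$, the Hyperbolic Lemma~\ref{le:hyplemma} applied to the equilibrium-free compact invariant subsets of $\omega(z)$ (note that $\Lambda$ is asymptotically sectional-hyperbolic, since wNU2SE at a point implies the (ASE) condition there), and a recurrence estimate near the saddles as in~\cite[Theorem~1.6]{ArSal25}, the hypothesis that every $x\in\Lambda$ not converging to an equilibrium satisfies wNU2SE propagates to every $z\in\widehat U$. Under (B) we work on $\widehat U\cap\Omega$, still of positive volume, where moreover (wSR) holds.

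Next, for $z$ in this set, wNU2SE --- equivalently $\limsup_n\frac1n\sum_{i=0}^{n-1}\varphi(f^i(z))\ge c_0$ with $\varphi(x):=-\log\|\wedge^2(Df\mid_{E^{cu}_x})^{-1}\|$ --- yields, through the Pliss lemma, an infinite set of \emph{$2$-sectional hyperbolic times} for $z$; at such a time $n$ the hyperbolic-time estimates, together with the uniform contraction of $E^s$ and the domination, provide an admissible center-unstable disk $\Delta_n\ni f^n(z)$ of a fixed radius $\delta_1>0$ on which the backward iterates $f^{-k}$, $1\le k\le n$, contract with uniformly bounded distortion, exactly as in the proofs of Theorems~\ref{thm:physASH} and~\ref{thm:discretefabv}. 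Running the construction of the physical/SRB measure from a weak expansion developed there --- averaging the forward push-forwards of Lebesgue measure carried by admissible disks, extracting an $f$-invariant weak$^*$ accumulation point $\mu$ supported on $\Lambda=\cap_{t>0}\close{\phi_t(U)}$, and keeping track of the absolutely continuous pieces created at the hyperbolic times --- we reduce the theorem to checking that this construction is not obstructed near the equilibria; granted that, $\mu$ has absolutely continuous disintegration along center-unstable plaques of radius $\delta_1$, and $\mu$-a.e.\ ergodic component is a physical/SRB measure supported on $\Lambda$ by the standard argument (see~\cite{APPV,araujo_2021}).

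The step I expect to be the heart of the matter is this last one, and it is where hypotheses (A) and (B) enter. When $d_{cu}=2$ the worst central $2$-section is all of $E^{cu}$, so wNU2SE is a condition on the full central Jacobian $|\det(D\phi_T\mid_{E^{cu}})|$ and in particular controls it along orbits recurring to the equilibria; that is why Theorem~\ref{thm:physASH} requires nothing at the saddles. When $d_{cu}>2$, wNU2SE controls only the worst central $2$-section, and near a Rovella-type saddle $\sigma$ --- where that worst $2$-section contracts --- it does not by itself prevent the full central Jacobian $|\det(Df\mid_{E^{cu}_\sigma})|$ from being $<1$, in which case the $d_{cu}$-volume carried along a center-unstable disk recurring to $\sigma$ can collapse and the absolutely continuous structure created at hyperbolic times is eroded in the limit. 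Hypothesis (A) rules this out directly: $|\det(Df\mid_{E^{cu}_\sigma})|\ge1$ at every $\sigma$, together with continuity of $x\mapsto|\det(Df\mid_{E^{cu}_x})|$, bounds below by a harmless factor the central volume change of a center-unstable disk over each excursion near $\sing_\Lambda(G)$, so the absolutely continuous disintegration survives the infinitely many such excursions; the bookkeeping parallels the one in the proof of Theorem~\ref{thm:discretefabv}, with (A) playing the role that the integrability of $-\log d(\cdot,\sing_\Lambda(G))$ plays there under (SR). Hypothesis (B) rules it out by a different route: weak slow recurrence makes the relevant orbits spend an asymptotically negligible fraction of time near $\sing_\Lambda(G)$, so that $\mu(\sing_\Lambda(G))=0$ by Remark~\ref{rmk:noatom} and the excursions near the singular set are too sparse to affect the absolutely continuous disintegration of $\mu$. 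In either case we obtain a physical/SRB measure supported on $\Lambda$. (We only claim existence, not the a.e.-coverage of $\Omega$ by finitely many physical measures as in Theorem~\ref{thm:discretefabv}; that stronger conclusion is what forces the stronger hypotheses (NU2SE) and (SR) there.)

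Finally, suppose $\Lambda$ is transitive. Then the center-unstable plaques carrying the absolutely continuous conditionals of any physical/SRB measure obtained above spread densely along $\Lambda$, so such a measure has full support equal to $\Lambda$; by the usual ergodicity (Hopf-type) argument for transitive attractors with absolutely continuous center-unstable conditionals (see~\cite{APPV,araujo_2021}), the physical/SRB measure is then unique, and its basin has full Lebesgue measure in the trapping region $U$ --- that is, it covers a neighborhood of $\Lambda$.
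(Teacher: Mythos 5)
The statement you were asked to prove is Conjecture~\ref{conj:physnoSR}: that the slow recurrence hypotheses ((SR) in Theorem~\ref{thm:discretefabv}, and the recurrence control (A)/(B) in Theorem~\ref{mthm:hdASH}) can be \emph{dropped} while keeping the conclusions. The paper does not prove this --- it is stated as an open conjecture, motivated by the Burguet--Ovadia results --- and your proposal does not prove it either. Throughout your argument you invoke exactly the hypotheses the conjecture asks you to dispense with: under (B) you restrict to $\widehat U\cap\Omega$ where (wSR) holds and use it to get $\mu(\sing_\Lambda(G))=0$ via Remark~\ref{rmk:noatom}; under (A) you use the determinant bound at the equilibria to keep the absolutely continuous disintegration from collapsing. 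What you have written is, at best, an alternative sketch of Theorem~\ref{mthm:hdASH} (by hyperbolic times and admissible disks rather than the paper's route through the Catsigeras--Cerminara--Enrich generalized Pesin inequality, Ruelle's inequality, and an atomic/non-atomic decomposition of the limit measure). It does not address the conjecture.

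The concrete obstruction --- which your own ``heart of the matter'' paragraph describes accurately --- is precisely why this is a conjecture and not a theorem. When $d_{cu}>2$ and no recurrence control is assumed, a weak$^*$ limit $\mu$ of the empirical measures along a wNU2SE orbit may charge a generalized Rovella-like equilibrium $\sigma$ with $|\det(Df\mid_{E^{cu}_\sigma})|<1$; then the atomic component contributes zero entropy but a strictly negative central Jacobian integral, the lower bound~\eqref{eq:gpesin} no longer forces Pesin's formula for a nontrivial non-atomic component, and the hyperbolic-times construction loses its absolutely continuous pieces during the excursions near $\sigma$. Removing (SR)/(wSR) would require a genuinely new input --- e.g.\ showing directly that such limit measures cannot charge the singular set, or adapting the entropy-theoretic machinery of~\cite{BOvadia} to the singular flow setting --- none of which appears in your proposal. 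Explaining why the hypotheses are used is not the same as showing they are superfluous.
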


Naturally, we should study the existence of physical measures for
$p$-sectional expanding partially hyperbolic attractors.

\begin{conjecture}[$p$-sectional expansion and physical measure]
  \label{conj:physpsec}
  The attractors constructed in Section~\ref{sec:3-section-expans}
  admit a unique physical/SRB measure with a full volume ergodic basin.
\end{conjecture}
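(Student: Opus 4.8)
The plan is to establish, on a positive-volume subset $\Omega_0\subset U$, both the weak expansion condition wNU2SE and the weak slow recurrence condition wSR, and then to rerun the codimension-two construction of~\cite{ArSal25} (behind Theorems~\ref{thm:physASH} and~\ref{thm:discretefabv}) essentially verbatim, observing that it only ever involves $\wedge^2E^{cu}$ and so is insensitive to $\dim E^{cu}$. I would work with the time-one map $f=\phi_1$ on a compact trapping region, so that $\psi(z):=\log\|\wedge^2(Df\mid_{E^{cu}_z})^{-1}\|$ is continuous and bounded and wNU2SE for a point $x$ reads $\liminf_n\frac1n\sum_{i=0}^{n-1}\psi(f^ix)\le-c_0$. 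Since the equilibria are hyperbolic saddles, $\bigcup_\sigma W^s_\sigma$ has zero volume, so $\m$-a.e.\ $x\in U$ has $\omega(x)\subset\Lambda$ and does not converge to an equilibrium; one then checks, using the attracting structure and the pointwise hypothesis on $\Lambda$, that wNU2SE holds along the forward orbit of $\m$-a.e.\ $x\in U$. Call the resulting full-volume set $\Omega_0$ (under hypothesis (B), replace it by $\Omega_0\cap\Omega$, still of positive volume, on which wSR is already granted).

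The only genuinely new point is to prove that wSR holds along $\m$-a.e.\ orbit --- equivalently, that no weak$^*$ accumulation measure of the empirical measures $\frac1n\sum_{i=0}^{n-1}\delta_{f^ix}$ with $x\in\Omega_0$ charges $\sing_\Lambda(G)$ (cf.\ Remark~\ref{rmk:noatom}). Under (B) this is the hypothesis. Under (A) I would argue locally near each saddle $\sigma$: since $\mu(\{\sigma\})\le\liminf_k\mu_{n_k}(B_r(\sigma))$ for every weak$^*$ limit along $(n_k)$ and every small $r$, it suffices to bound the asymptotic density of visits of the orbit of $x$ to $B_r(\sigma)$ by a quantity that vanishes as $r\to0$. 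The ingredients are the Hyperbolic Lemma~\ref{le:hyplemma}, which makes $\Lambda$ uniformly hyperbolic off any fixed neighbourhood of $\sing_\Lambda(G)$ and hence controls the return dynamics to a thin tube around $W^s_\sigma$; the hypothesis $|\det(Df\mid_{E^{cu}_\sigma})|\ge1$, which by continuity gives $|\det(Df\mid_{E^{cu}_z})|\ge e^{-\epsilon}$ on $B_r(\sigma)$ and, together with the uniform contraction of $E^s$ and the local saddle normal form, rules out that a forward orbit not converging to $\sigma$ lingers near $\sigma$ with positive density; and the already available wNU2SE, which on its own forbids positive-density recurrence to the sectionally contracting equilibria (those with $\psi(\sigma)>0$). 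In effect, (A) forces wSR along $\m$-a.e.\ orbit, reducing case (A) to case (B).

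With wNU2SE and wSR in hand on the positive-volume set $\Omega_0$, I would reproduce the codimension-two argument. A Pliss-lemma argument turns the $\liminf$ condition into a positive lower density of $\wedge^2E^{cu}$-expanding times along a subsequence of reference times, for each $x\in\Omega_0$; at such times one pushes forward the normalized Lebesgue measure on small disks tangent to the center-unstable direction, distortion being controlled via domination and the uniform contraction of $E^s$ (using that $\wedge^2E^{cu}$-expansion also forces $E^{cu}$-volume expansion, so nothing is lost by phrasing everything through $\wedge^2E^{cu}$), while wSR keeps the accumulated measures away from the equilibria; these measures accumulate on finitely many ergodic physical/SRB measures whose basins cover $\m$-a.e.\ point of $\Omega_0$, exactly as in the proofs of Theorems~\ref{thm:physASH} and~\ref{thm:discretefabv}, the only change being $\wedge^2E^{cu}$ in place of $\det E^{cu}$, which plays no role beyond dimension two. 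In particular at least one physical/SRB measure supported on $\Lambda$ exists. If $\Lambda$ is transitive, these finitely many measures must coincide (their ergodic basins are invariant and open modulo null sets, which transitivity forbids for two distinct ones), so there is a unique physical/SRB measure $\mu$ with $\m(\Omega_0\setminus B(\mu))=0$, and since $U$ is a trapping region attracted to $\Lambda$ the basin $B(\mu)$ covers a whole neighbourhood of $\Lambda$ up to zero volume. Corollary~\ref{mcor:wSR-SRB} then drops out, a physical/SRB measure having no atoms.

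The step I expect to be the main obstacle is the one under hypothesis (A): proving, with no a priori recurrence control, that a forward orbit not converging to a saddle $\sigma$ cannot accumulate positive asymptotic density of time arbitrarily close to $\sigma$, using only $|\det(Df\mid_{E^{cu}_\sigma})|\ge1$, the Hyperbolic Lemma, the local saddle structure, and the available wNU2SE --- that is, deducing (wSR) from (A). Everything downstream is a codimension-free rerun of the codimension-two arguments.
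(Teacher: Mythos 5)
There is a fundamental problem: the statement you are trying to prove is stated in the paper as a \emph{conjecture}, and the paper offers no proof of it --- for a reason that directly undercuts your strategy. Your entire plan rests on first establishing wNU2SE on a positive-volume subset and then rerunning the codimension-two machinery. But the attractors of Section~\ref{sec:3-section-expans} are constructed \emph{precisely so that wNU2SE/NU2SE fails}: Subsection~\ref{sec:asympt-section-expan-5} (``Absence of asymptotical ($2$-)sectional expansion'') shows that in the region $z>0$ of the cylinder there is always a $2$-plane contracted by $D\phi_t$, and that after slowing the flow by the factor $\zeta_0$ the Birkhoff averages of $\psi^{cu}(w)=\log\|\wedge^2(Df\mid_{E^{cu}_w})^{-1}\|$ are bounded \emph{below} by a strictly positive constant on a full-volume set of trajectories; the section ends with ``This shows that the flow of $Z$ does not satisfy NU2SE, as claimed.'' So the first step of your argument cannot be carried out: these examples only satisfy asymptotic $p$-sectional expansion for $p=d_{cu}>2$ (control of the full central determinant), not $2$-sectional expansion. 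You appear to have conflated these attractors with those of Sections~\ref{sec:multid-exampl} and~\ref{sec:new-examples-non}, where wNU2SE does hold and Theorems~\ref{mthm:hdASH} and~\ref{thm:discretefabv} genuinely apply.

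Beyond that, the claim that the codimension-two argument ``only ever involves $\wedge^2 E^{cu}$ and so is insensitive to $\dim E^{cu}$'' hides the actual obstruction. The proof of Theorem~\ref{mthm:hdASH} combines the generalized Pesin inequality $h_\mu(f)\ge\int\log|\det(Df\mid_{E^{cu}})|\,d\mu$ with Ruelle's inequality $h_\mu(f)\le\int\Sigma^+\,d\mu$; to close the loop into Pesin's formula one needs $\int\Sigma^+\,d\mu=\int\log J^{cu}f\,d\mu$, i.e.\ that \emph{no} central Lyapunov exponent transverse to the flow is negative. That is exactly what $\mu(\psi^{cu})\le-c_0$ (wNU2SE) delivers and what mere expansion of the $d_{cu}$-dimensional central volume does not: with only the determinant controlled, some transverse central exponents may be negative while the sum stays positive, and then $\int\Sigma^+\,d\mu>\int\log J^{cu}f\,d\mu$ leaves a gap between the two inequalities. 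Resolving this --- or proving Conjecture~\ref{conj:ase-nu2se}-type implications for $p$-sectional expansion --- is the genuinely open content of Conjecture~\ref{conj:physpsec}, and your proposal does not engage with it.
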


\subsection{Acknowledgments}
\label{sec:acknowledgements}

V.A. thanks the Mathematics and Statistics Institute of the Federal
University of Bahia (Brazil) for its support of basic research and
CNPq (Brazil --- grant 304047/2023-6) for partial financial
support. L.S. thanks the Mathematics Institute of Universidade Federal
do Rio de Janeiro (Brazil) for its encouraging of mathematical
research; and the Mathematics and Statistics Institute of the Federal
University of Bahia (Brazil) for its hospitality, together with
CAPES-Finance code 001, CNPq, FAPERJ - Aux{\'\i}lio B\'asico \`a
Pesquisa (APQ1) Project E-26/211.690/2021; and FAPERJ - Jovem
Cientista do Nosso Estado (JCNE) grant E-26/200.271/2023 for partial
financial support; and also PROEXT-PG project Dynamic Women -
Din\^amicas, CNPq-Brazil (grant Projeto Universal 404943/2023-3).

\section{Proof of existence of a physical/SRB measure}
\label{sec:existence-physic-mea-1}

Here we prove Theorem~\ref{mthm:hdASH}. The proof relies on applying
the following useful extension of Pesin's Formula~\cite{Pe77} obtained
by Catsigeras-Cerminara-Enrich~\cite{CatCerEnr15}.

\begin{theorem}[Generalized Pesin's Inequality~\cite{CatCerEnr15}]
  \label{thm:GenPesin}
  For any $C^1$ diffeomorphism $f$, if $\Lambda$ is an invariant
  compact set with a dominated splitting $T_\Lambda M= E\oplus F$,
  then for Lebesgue almost every point $x$ satisfying
  $\omega(x)\subset \Lambda$, the entropy of any weak$^*$ limit
  measure $\mu$ of the sequence
  $\big(\frac1n \sum_{i=0}^{n-1} \delta_{f^i(x)}\big)_{n\ge1}$ is
  bounded from below:
  \begin{align}\label{eq:gpesin}
    h_\mu(f)\geq \int \log|\det (Df\mid_{F})|\,d\mu.
  \end{align}
\end{theorem}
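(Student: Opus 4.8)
The plan is to derive the entropy bound \eqref{eq:gpesin} from a volume-growth estimate for disks tangent to the dominated subbundle $F$, which is the natural way the ``easy half'' of Pesin's formula survives in the merely $C^1$, non-uniformly hyperbolic setting. First I would extend the continuous splitting $E\oplus F$ over $\Lambda$ to a cone field $\cC^F$ on a neighborhood $V\supset\Lambda$, so that $\cC^F$ is forward invariant under $Df$ and contracted toward $F$ along orbits inside $V$ by domination. For Lebesgue-almost every $x$ with $\omega(x)\subset\Lambda$, whose forward orbit eventually stays in $V$, I would choose a small embedded $C^1$ disk $\gamma_x$ through $x$ of dimension $\dim F$ with $T\gamma_x\subset\cC^F$. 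Domination forces $f^n(\gamma_x)$ to remain tangent to $\cC^F$ and the tangent spaces $Df^n(T_x\gamma_x)$ to approach $F_{f^nx}$ exponentially fast, so that, writing $\vol$ for the $\dim F$-dimensional volume,
\begin{align*}
  \frac1n\log\vol\big(f^n(\gamma_x)\big)
  =\frac1n\sum_{i=0}^{n-1}\log\big|\det(Df\mid_{F_{f^ix}})\big|+o(1),
\end{align*}
the error term absorbing the tangency drift and a subexponential distortion bound along $\gamma_x$.

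Next I would fix a subsequence $n_k$ realizing a given weak$^*$ limit $\mu$ of the empirical measures $\frac1n\sum_{i=0}^{n-1}\delta_{f^ix}$. Since $y\mapsto\log|\det(Df\mid_{F_y})|$ is continuous on $\Lambda$ (continuity of $F$ and of $Df$) and $\mu$ is supported on $\omega(x)\subset\Lambda$, weak$^*$ convergence yields
\begin{align*}
  \lim_{k\to\infty}\frac1{n_k}\sum_{i=0}^{n_k-1}\log\big|\det(Df\mid_{F_{f^ix}})\big|
  =\int\log\big|\det(Df\mid_F)\big|\,d\mu,
\end{align*}
so the exponential growth rate of the $F$-disk volume along $n_k$ equals the right-hand side of \eqref{eq:gpesin}. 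It therefore suffices to bound $h_\mu(f)$ from below by this growth rate.

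That last conversion is the heart of the argument and the step I expect to be the main obstacle. The mechanism is that a disk of large $\dim F$-volume must carry many dynamically distinguishable orbit segments: covering $f^{n_k}(\gamma_x)$ by Bowen $(n_k,\epsilon)$-balls, the number required is at least $\vol(f^{n_k}(\gamma_x))$ divided by the maximal $\dim F$-volume a single $(n_k,\epsilon)$-ball can occupy inside the disk, a quantity that is only subexponentially small. This produces an exponentially large $(n_k,\epsilon)$-separated set inside $\gamma_x$ whose own empirical distributions accumulate on $\mu$, and a Brin--Katok local-entropy estimate then gives $h_\mu(f)\ge\limsup_k\frac1{n_k}\log\vol(f^{n_k}(\gamma_x))$. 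Making this rigorous is delicate in three ways: one must control the geometry of $(n_k,\epsilon)$-balls meeting the disk uniformly in $k$; one must ensure the separated points feed the entropy of the \emph{specific} limit measure $\mu$ rather than merely topological entropy, which requires relating their empirical measures back to $\mu$; and, since $f$ is only $C^1$, bounded distortion is unavailable and must be replaced throughout by a subexponential distortion bound extracted from uniform continuity of $Df$ and the domination rate $\lambda$.

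Finally I would assemble the pieces: a Fubini-type absolute-continuity argument shows that Lebesgue-almost every $x$ with $\omega(x)\subset\Lambda$ lies on an admissible positive-volume $F$-disk, so the conclusion is reached on a full-measure set; running the separated-set count along the fixed subsequence $n_k$ matches the volume estimate to the chosen limit $\mu$; and the affine, upper-semicontinuous dependence of entropy on the measure passes the disk-level bound to $h_\mu(f)$. The recurring difficulty is that $F$ is only dominated and need not be expanding, so $\log|\det(Df\mid_F)|$ may be negative and the disks need not actually grow; the whole argument must be asymptotic and cannot invoke uniform expansion or a hyperbolic local product structure.
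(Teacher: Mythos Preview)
The paper does not prove this statement at all: Theorem~\ref{thm:GenPesin} is quoted from Catsigeras--Cerminara--Enrich~\cite{CatCerEnr15} and used as a black box in the proof of Theorem~\ref{mthm:hdASH}. There is therefore no ``paper's own proof'' to compare against; your proposal is an attempt to reconstruct the argument of the cited reference, not of the present paper.

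That said, your sketch is broadly in the spirit of the argument in~\cite{CatCerEnr15}: extend the dominated splitting to an invariant cone field on a neighborhood, push small $F$-disks forward, relate the exponential volume growth rate to the Birkhoff average of $\log|\det(Df\mid_F)|$, and convert volume growth into an entropy lower bound for the specific limit measure via a separated-set/Bowen-ball count. Two points deserve care if you actually carry this out. First, the step linking the $(n_k,\epsilon)$-separated set on the disk to $h_\mu(f)$ for the \emph{given} limit $\mu$ is not a direct Brin--Katok estimate; in~\cite{CatCerEnr15} it goes through a Misiurewicz-type argument (constructing measures from separated sets and using upper semicontinuity of entropy along the resulting convex combinations), and you should expect that machinery rather than a pointwise local-entropy bound. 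Second, your final ``Fubini-type absolute-continuity'' paragraph is misplaced: no transverse absolute continuity is available (or needed) here, since $E$ need not be uniformly contracting and there is no stable lamination in play; the full-measure set of good $x$ comes instead from the genericity of points whose empirical measures converge and from the fact that one may take the $F$-disk through any such $x$ once its orbit has entered the cone-field neighborhood.
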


We used this in~\cite{ArSal25} for $x$ satisfying the (wNU2SE)
condition to find an ergodic hyperbolic physical/SRB measure as an
ergodic component of a limit measure $\mu$ as above. It is well known
from the work of Ledrappier-Young~\cite{LY85} that for $C^2$ systems
such measures are physical/SRB measures.

In what follows we write $J^{cu}f(w):=|\det(Df\mid_{E^{cu}_w})|$ and
$\psi^{cu}(w):= \log\big\|\wedge^2 (Df\mid_{E^{cu}_w})^{-1}\big\|$.

\begin{proof}[Proof of Theorem~\ref{mthm:hdASH}]
  We start by using Theorem~\ref{thm:GenPesin} to fix a full
  $\leb$-measure subset $X\subset U$ and for $x\in X$ we consider a
  weak$^*$ limit point $\mu$ of the sequence considered in Theorem
  ~\ref{thm:GenPesin}. Then we have
  $h_\mu(f)\ge\int\log J^{cu}f\,d\mu$ from~\eqref{eq:gpesin}.

  We want to obtain the reverse inequality to conclude Pesin's Formula
  $h_\mu=\mu(\log J^{cu}f)>0$ and invoke Ledrappier-Young's main
  result from~\cite{LY85} ensuring, since $f$ is a $C^2$
  diffeomorphism, that $\mu$ admits an ergodic component $\nu$ which
  is a physical/SRB measure, as needed.

  We consider the following two cases: either $\mu$ admits some atom ---
  necessarily a periodic point of $f$ --- or $\mu$ is non-atomic.

  In the latter case, then $\mu(\sing_\Lambda(G))=0$ and a
  $\mu$-generic point $z\in\Lambda$ cannot converge to any
  singularity. Indeed, otherwise we would have for any continuous
  observable $\vfi:M\to\RR$
  \begin{align*}
    \mu(\vfi)=\int\vfi\,d\mu
    =
    \lim_{n\to+\infty}\frac1n\sum\nolimits_{i=0}^{n-1}\vfi(f^iz)
  \end{align*}
  since $z$ is Birkhoff-generic for $\mu$, and
  $\mu(\vfi)=\vfi(\sigma)$ if $z\in W^s(\sigma)$ for some singularity
  $\sigma\in\sing_\Lambda(G)$. Because this holds for any continuous
  observable, we conclude $\mu=\delta_\sigma$ contradicting the
  assumption that $\mu$ is non-atomic.

  Therefore, such Birkhoff-generic $z$ must be in
  $\Lambda^*:=\Lambda\setminus
  \cup\{W^s_\sigma:\sigma\in\sing_\Lambda(G)\}$ and so, by assumption,
  such $z$ satisfies wNU2SE. In particular, using $\psi^{cu}$ as
  observable together with Birkhoff Ergodic Theorem, we obtain
  $\mu(\psi^{cu})\le -c_0$.

  This ensures, in particular, that all Lyapunov exponents along
  $E^{cu}$ are either zero (along the flow direction $G$) or strictly
  positive. Hence, we get that
  $\mu(\log J^{cu}f)=\int\Sigma^+\,d\mu\ge c_0$ gives the averaged sum
  of central-unstable Lyapunov exponents, and we can apply Ruelle's
  Inequality
  \begin{align*}
    h_\mu(f)\le\int\Sigma^+\,d\mu = \mu(\log J^{cu}f).
  \end{align*}
  We conclude that $\mu$ satisfies  Pesin's Formula as needed, and the
  existence of an ergodic component $\nu$ of $\mu$ which is a
  physical/SRB measure follows.

  We are left with the case where $\mu$ admits an atomic
  component. Let $\mu=\alpha\cdot\eta+\beta\cdot\xi$ with
  $\alpha+\beta=1, \alpha\ge0, \beta>0$ be the decomposition of $\mu$
  into a non-atomic component $\eta$ and a purely atomic component
  $\xi$.  We want to use our condition (A) or (B) to show that this
  case cannot occur.

  Atomic components of invariant probability measures are at most
  denumerably many; such components for a continuous transformation
  are supported on periodic orbits.  Periodic orbits of the time-$1$
  map $f$ of the flow of $G$ are either equilibria $\sigma$ (fixed
  points of the flow) or periodic orbits $p$ of the flow with integer
  minimal period.

  If $p\in\supp\xi$ has minimal period $\ell$, then
  $\pi_p:=\frac1\ell\sum_{i=0}^{\ell-1}\delta_{f^ip}$ is an ergodic
  component of $\xi$ and
  $\pi_p(\log J^{cu}f)=\frac1{\ell}\log J^{cu}f^\ell(p)\ge c_*>0$
  since $p\in\Lambda^*$.

  We start with condition (A). Then for any
  $\sigma\in\sing_\Lambda(G)\cap \supp\xi$ we have
  $\delta_\sigma(\log J^{cu}f)>0$ and we cannot have $\beta=1$,
  i.e., $\mu$ cannot be purely atomic. Indeed, in this case
  $h_\mu(f)=0\ge\mu(\log J^{cu}f)=\xi(\log J^{cu}f)$ and we can write
  \begin{align}\label{eq:xilogJ}
    \xi(\log J^{cu}f)
    =
    \left(\sum\nolimits_{\sigma}t_\sigma\delta_\sigma
    +
    \sum\nolimits_{i\ge1}t_i\pi_{p_i}
    \right)(\log J^{cu})>0, 
  \end{align}
  where $\sum_{i\ge1}t_i+\sum_\sigma t_\sigma =1$ and
  $t_i,t_\sigma\ge0$. This contradicts the previous inequality.

  This contradiction ensures that there exists a non-atomic component
  with positive mass: that is, $\alpha>0$. Since $\eta=\mu-\xi$ is
  also $f$-invariant, we can write
  \begin{align*}
    h_{\mu}(f)
    &=
    \alpha h_{\eta}(f)+\beta h_{\xi}(f)
    =
    \alpha \cdot h_{\eta}(f)
    \\
    &\ge
    \mu(\log J^{cu}f)
    =
    \alpha \eta(\log J^{cu}f)+\beta \xi(\log J^{cu}f)
    \ge
    \alpha \cdot \eta(\log J^{cu}f).
  \end{align*}
  We have obtained an $f$-invariant non-atomic probability measure $\eta$
  satisfying $h_{\eta}(f)\ge \eta(\log J^{cu}f)$. We can now proceed
  with the same argument as before to obtain a physical/SRB measure.

  We now replace condition (A) with condition (B). Then equilibria
  cannot be atoms of $\mu$ by Remark~\ref{rmk:noatom} However,
  from~\eqref{eq:xilogJ} other periodic points $p$ are also excluded
  since $p\in\Lambda^*$. Therefore, we conclude that $\beta=0$ in this
  case and we recover that $\mu$ is non-atomic. The rest of the
  argument follows and the proof of existence of a physical/SRB
  measure is complete.

  Finally, if $\Lambda$ is transitive, then the ergodic basin
  $B(\mu)$ of $\mu$ covers a neighborhood of $\Lambda$, except perhaps a
  zero volume subset, as a consequence of the properties of $cu$-Gibbs
  states; see e.g.~\cite[Theorem 5.1]{ArSal25}.
\end{proof}

\section{Construction of  ASH attractors}
\label{sec:multid-exampl}

We consider a ``solenoid'' constructed over a uniformly expanding map
$g:\TT\to\TT$ of the $k$-dimensional torus $\TT$, for some
$k\ge2$. That is, let $\DD$ be the unit disk on $\RR^2$ and consider a
smooth embedding $F_0:N\circlearrowleft$ of $N=\TT\times\DD$ into
itself, which preserves and contracts the foliation
$\F^s=\big\{\{z\}\times\DD: z\in\TT\big\}$. We will write $E^s$ for
the tangent bundle to the leaves of this foliation. The natural
projection $\pi:N\to\TT$ on the first factor \emph{smoothly
  conjugates} $F_0$ to $g$: $\pi\circ F_0=g\circ\pi$ --- we can assume
that $\pi$ is the projection associated to a tubular neighborhood of
$F_0(N\times\{0\})$. We assume also that
$F_0$ admits a fixed point $p$ and that
$\lambda_1^{-1}\le\|(Dg)^{-1}\|\le\lambda_0^{-1}$ for some fixed
$\lambda_1>\lambda_0>1$.

\subsection{ASH attractor, with non sectionally
  hyperbolic equilibria}
\label{sec:multid-asympt-sectio}

We start with $k=1$, that is, with the three-dimensional Smale
solenoid map.

\subsubsection{The suspension of the solenoid map}
\label{sec:suspens-soleno-map}

We further consider the constant vector field $X:=(0,1)$ on
$M_0=N\times[0,1]$, which defines a transition map from
$\Sigma_\epsilon=N\times\{\epsilon\}$ to
$\Sigma_{1-\epsilon}=N\times\{1-\epsilon\}$ for some fixed small
$\epsilon>0$, which is the identity in the first coordinate when
restricted to $\Sigma_\epsilon$. Next we modify this field on the
cylinder $\cC=U\times\DD\times[0,1]$ around the periodic orbit of the
point $p=(z,0)\in N\times\{0\}$, where $U$ is a small neighborhood of
$z$ in $\TT$, in such a way as to create both two equilibria
$\sigma_1,\sigma_2$, with either $k$ expanding and $3$ contracting
eigenvalues, or $1$ expanding and $k+2$ contracting eigenvalues, as
follows.  We fix $k=1$, so that $\TT=\sS^1$ and $U$ is an interval;
and ignore the stable foliation along $\DD$ in the next arguments.

\begin{figure}[htbp]
\includegraphics[width=10cm,height=9cm]{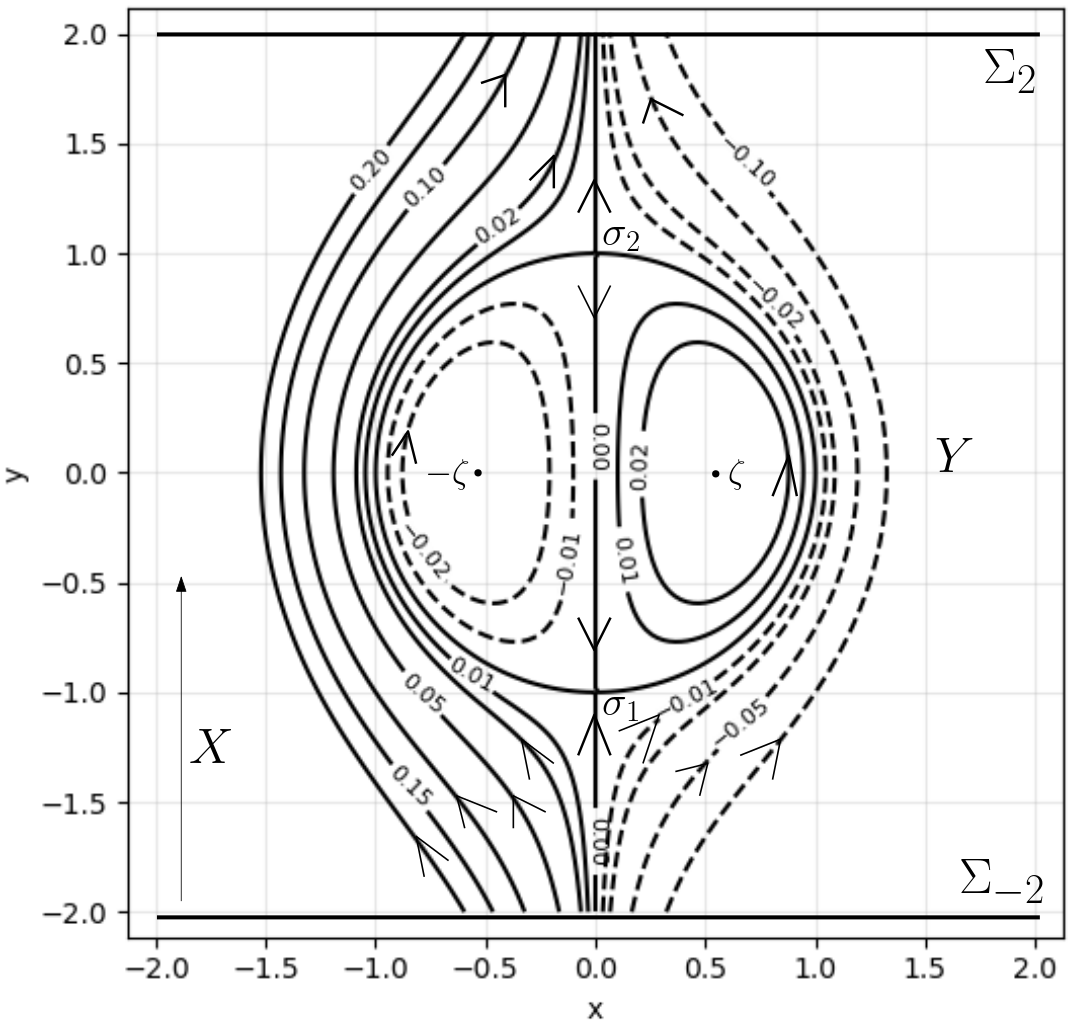}
\caption{A sketch of the vector field $Y$ with its Poincar\'e map from
  $\Sigma_{-2}$ to $\Sigma_2$ compared with the vector field $X$ in
  the square $[-2,2]\times[-2,2]$.}
\label{fig:H0}
\end{figure}
  
We consider the vector field $Y_0$ on the rectangle
$C_0=[-3,3]\times[-2,2]$ depicted in Figure~\ref{fig:H0} obtained from
the level curves of
$H(x,y):=x\big(1-(x^2+y^2)\xi_0(x)-2(1-\xi(x))\big)/10$, where
$\xi:\RR\to[0,1]$ is a smooth bump function $\xi_0:\RR\to[0,1]$ so
that $\xi_0\mid_{[-2,2]}\equiv1$,
$\xi_0\mid_{\RR\setminus[-3,3]}\equiv0$, $\xi_0(-x)=\xi_0(x)$ and
$\xi_0\mid_{\RR^+}$ is decreasing.

The field $Y_0$ is the  Hamiltonian vector field
$Y_0:= (H^\prime_y, -H^\prime_x)$ in the plane.
This ensures that \emph{$Y_0$ is conservative restricted to $C_0$} ---
recall that we are ignoring the uniformly contracting directions $E^s$
tangent to the foliation $\cF^s$.

We note that for $x\ge3$ we have $Y=X$.  Moreover, for $x\le2$ we can
explicitly write
 \begin{align*}
   Y_0(x,y)
   &=
     \left(-\frac{x y}5 , \frac{3x^2+y^2-1}{10}\right).
 \end{align*}

\subsubsection{Poincar\'e transition map is the identity}
\label{sec:poincare-transit-map}

The symmetry of the Hamiltonian ensures that the level curves of $H$
with non-zero values are symmetric with respect to the transformation
$S:(x,y)\mapsto(-x,y)$, i.e., $S(H^{-1}(\{\zeta\}))=H^{-1}(\{-\zeta\})$ for
$\zeta\neq0$ and these level curves are trajectories of the flow with
positive speed in the $y$ direction. 

\begin{claim}\label{cl:trId}
  The transition Poincar\'e map from $\Sigma_{-2}=[-3,3]\times\{-2\}$
  to $\Sigma_2=[-3,3]\times\{2\}$ is the identity away from the point
  $(0,-2)$
\end{claim}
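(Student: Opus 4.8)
The plan is to use that $Y_0$ is a Hamiltonian vector field, so that $H$ is a first integral, combined with the fact that $H(x,y)$ depends on $y$ only through $y^2$. The reflection $R\colon(x,y)\mapsto(x,-y)$ therefore satisfies $H\circ R=H$; since $R$ reverses the orientation of the plane, and hence the symplectic form, it reverses the Hamiltonian flow: $R\circ\phi_t=\phi_{-t}\circ R$ (equivalently $DR\cdot Y_0=-Y_0\circ R$, which one checks at once by differentiating $H\circ R=H$ in $x$ and in $y$). Consequently any trajectory that crosses $\{y=0\}$ is globally symmetric under $R$: if $\phi_{t_\ast}(x_0,-2)=(x_\ast,0)$, then applying $R$ gives $\phi_{-t_\ast}(x_0,2)=(x_\ast,0)$ as well, hence $\phi_{2t_\ast}(x_0,-2)=(x_0,2)$. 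Thus, as soon as the orbit of $(x_0,-2)$ reaches $\Sigma_2$ it does so at $(x_0,2)$; the remaining work is to see that this happens precisely for $x_0\neq 0$, and with $(x_0,2)$ the \emph{first} hit.

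To organize this I would first record the elementary facts. On $\Sigma_{\pm 2}\cap C_0$ the field points strictly upward: for $|x|\le 2$ one computes $Y_0(x,\pm2)=(\mp 2x/5,\,(3x^2+3)/10)$, with positive second component, while $Y_0$ is vertical (parallel to $X$) for $|x|\ge 3$, positivity in the buffer $2\le|x|\le3$ being ensured by the choice of $\xi_0$; also $\{x=0\}$ is invariant, since there $\dot x=H_y'=0$. Now fix $x_0\neq 0$ and put $c:=H(x_0,-2)$. From the explicit formula, the factor multiplying $x_0/10$ in $H(x_0,-2)$ equals $-1-(x_0^2+2)\xi_0(x_0)<0$, so $c\neq 0$ and $\sgn c=-\sgn x_0$; the orbit of $(x_0,-2)$ stays on $H^{-1}(c)$. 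I would then show that the connected component $\Gamma$ of $H^{-1}(c)\cap C_0$ containing $(x_0,-2)$ is a regular arc, a graph $x=\psi(y)$ over $y\in[-2,2]$ with $\psi(\pm2)=x_0$: it does not meet $\{x=0\}$ (where $H=0\neq c$), so it lies in a single half-plane $\{\pm x>0\}$; there, with $\xi_0\equiv 1$ in the range at issue, the equation $H=c$ is a depressed cubic in $x$ for each fixed $y$, with exactly one root of the appropriate sign, giving the graph; $\Gamma$ carries no equilibrium, since the critical points of $H$ are $(0,\pm1)$ (critical value $0\neq c$) and $(\pm1/\sqrt3,0)$ (critical values $\mp1/(15\sqrt3)$, lying in the half-plane opposite to $\Gamma$); and $\Gamma$ does not escape through the sides $\{x=\pm3\}$, where $H$ is the constant $\mp3/10\neq c$. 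Finally, the parity $H(x,-y)=H(x,y)$ together with the uniqueness of the root forces $\psi(-y)=\psi(y)$, so the two endpoints of $\Gamma$ are exactly $(x_0,-2)$ and $(x_0,2)$.

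Putting the pieces together: at $(x_0,-2)$ the field points into $C_0$, so the forward orbit enters $C_0$ and, $\Gamma$ being a compact arc free of equilibria, traverses it in finite time to its other endpoint $(x_0,2)\in\Sigma_2$; since the interior of $\Gamma$ has $|y|<2$, this is the first hit of $\Sigma_2$. Hence the transition Poincar\'e map fixes every $x_0\neq 0$. For $x_0=0$ the orbit of $(0,-2)$ runs up the invariant line $\{x=0\}$, where $\dot y=(y^2-1)/10>0$ on $-2\le y<-1$, so it converges to the equilibrium $(0,-1)$ and never reaches $\Sigma_2$; this is exactly why the map omits $(0,-2)$. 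The main obstacle is the regularity statement for $\Gamma$ — i.e.\ proving that the orbit of every $x_0\neq 0$ genuinely reaches $\Sigma_2$, without escaping through the sides of $C_0$ or being trapped near an equilibrium — which requires the explicit level-set analysis above; once that is in place, the reflection symmetry does the rest.
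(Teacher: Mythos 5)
Your argument is correct in substance and rests on the same mechanism as the paper's: the reflection $(x,y)\mapsto(x,-y)$ preserves $H$ and reverses the flow of $Y_0$, which is exactly item (b) of the paper's Lemma~\ref{le:symmetric}. Where you diverge is in how the symmetry is exploited. The paper argues by contradiction: if the orbit of $(x_0,-2)$ exited at $(x_1,2)$ with $x_1\neq x_0$, the reflected orbit would exchange the order of the endpoints and two trajectories would have to cross. You instead follow the orbit to its first hit $(x_*,0)$ of $\{y=0\}$ at time $t_*$ and reflect there, getting $\phi_{2t_*}(x_0,-2)=(x_0,2)$ directly; this is cleaner, avoids the contradiction, and yields the crossing time for free. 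You also supply something the paper leaves implicit, namely a proof that the orbit of every $(x_0,-2)$ with $x_0\neq0$ actually reaches $\Sigma_2$ (the paper only asserts that the nonzero level curves have positive $y$-speed, cf.\ the sentence preceding the Claim and Remark~\ref{rmk:crosstime}).

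Two loose ends in that last verification, both repairable. First, the critical values of $H$ at the centers $(\pm\sqrt3/3,0)$ are $\pm1/(15\sqrt3)$, not $\mp1/(15\sqrt3)$, and for $x_0>0$ the center $(+\sqrt3/3,0)$ lies in the \emph{same} half-plane as $\Gamma$; the exclusion still holds, but for the opposite reason: the critical value there has sign opposite to $c$ (the centers sit inside the unit circle, where $xH>0$, while $\Gamma$ lies outside it). Second, the depressed-cubic description of $\Gamma$ is only valid where $\xi_0\equiv1$, i.e.\ $|x|\le2$, and for $|x_0|$ above roughly $1.2$ the level curve does enter the buffer $2<|x|<3$ (for $x_0=2$ the positive root at $y=0$ is already $\approx2.5$); moreover $c=H(x_0,-2)=-x_0(3+x_0^2)/10$ equals $-3/10=H(\pm3,\cdot)$ for one value of $x_0$, so the ``constant value on the sides'' exclusion is not airtight. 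The fix is easy: $\{x=\pm3\}$ is invariant (there $\dot x=H'_y=-xy\xi_0(x)/5=0$), so no orbit escapes sideways, and on $\{2\le|x|\le3\}$ the vertical component of $Y_0$ is positive by the choice of $\xi_0$ (Remark~\ref{rmk:crosstime}), so $y$ still increases monotonically along the orbit there and the crossing is completed in bounded time.
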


\begin{remark}[time to cross]
  \label{rmk:crosstime}
  For $2\le|x|\le3$ the flow on $C_0$ has a vertical speed along the
  positive direction of the $y$-axis of at least
  $(2^2-1)/10=3/10$. Hence, starting from $(x,-2)$ the flow arrives at
  $(x,2)$ after a time of $t(x)\le 4\cdot 10/3\le16$.
\end{remark}

\subsubsection{Non-sectional hyperbolic equilibria}
\label{sec:non-section-hyperb}
  
The eigenspace of one of the contracting (expanding) eigenvalues of the
equilibria $\sigma_1,\sigma_2$ lies along the vertical direction (the
direction of $X$), \emph{the other two-dimensional contracting
  directions still lie on the direction of $\DD$ (ignored in the
  pictures)}, and the remaining expanding/contracting eigenspaces are
transversal to the vertical $X$ direction; see
Figure~\ref{fig:H0}. There are also a pair of fixed elliptic
equilibria represented by $\pm\zeta$.

We have $\sigma_i=(0,(-1)^i), i=1,2$ and $\zeta=(\sqrt3/3,0)$
so that
\begin{align}\label{eq:DY0}
  DY_0(\sigma_i)=
  \begin{bmatrix}
    (-1)^{i+1}/5
    & 0
    \\ 0
    & (-1)^i/5
  \end{bmatrix}
      \quad\&\quad
      DY_0(\pm\zeta)
      =\pm
      \begin{bmatrix}
        0
        & -\sqrt3/15
        \\
        \sqrt3/25
        &0
      \end{bmatrix}.
\end{align}
This shows that $\sigma_i$ are hyperbolic saddles which are \emph{not
  sectionally hyperbolic}: neither sectionally expanding, nor
sectionaly contracting, since their traces vanish.

\subsubsection{Partial hyperbolic attractor}
\label{sec:partialy-hyperb}

After rescaling, we assume that $Y_0$ is defined in the initial cylinder
$\cC$, by setting the coordinates corresponding to the factor $\DD$
equal to zero.  We also assume that the standard inner product
satisfies $\langle Y_0, X\rangle>0$ on the Poincar\'e sections
$\Sigma_{\epsilon}\cup\Sigma_{1-\epsilon}$ corresponding to
$\Sigma_{-2}\cup\Sigma_2$; and take a $C^\infty$ bump function
$\psi:[0,1]\circlearrowleft$ so that
$\psi\mid_{[\epsilon/2,1-\epsilon/2]}\equiv0$ and
$\psi\mid_{[0,\epsilon/3]\cup[1-\epsilon/3,1]}\equiv1$. Then, we
define the vector field
\begin{align}\label{eq:attached}
  G_0(x,u):=
  \psi(u)\cdot X+(1-\psi(u))\cdot Y_0(x,u), \quad (x,u)\in M_0
\end{align}
which generates a smooth transition map $L$ from
$\Sigma_0^*=(N\setminus\{p\})\times\{0\}$ to
$\Sigma_1=N\times\{1\}$. \emph{Since the Poincar\'e transition maps of
  both $X$ and $Y$ are the identity, then $L=Id$.}

Together with the identification $(x,0)\sim(F_0(x),1), x\in N$ we
obtain a smooth parallelizable manifold $M=M_0/\sim$ where $G_0$ induces
a $C^\infty$ vector field which we denote by the same letter. We write
$(\phi_t)_{t\in\RR}$ for the induced flow.
We also have an attracting subset $\Lambda=\cap_{t\ge0}\phi_t(M)$ with
$M$ as topological basin of attraction.

\emph{The Poincar\'e first return map of this vector field
  $P:\Sigma_0^*\to\Sigma_0$ coincides with
  $F_0\mid_{N\setminus\{p\}}$.} In particular,
$\Lambda_0:= \cap_{n\in\ZZ_0^+}F_0^n(N)$ has an open and dense subset
of dense trajectories, and so the flow $\phi_t$ of $G$ is transitive
on $\Lambda$. Thus, $\Lambda$ is an attractor.

Since $\Lambda_0$ admits a $DF_0$-invariant hyperbolic splitting
$T_{\Lambda_0}N=E^s_{\Lambda_0}\oplus E^u_{\Lambda_0}$, and we may
assume without loss of generality that the contracting rate along
$E^s$ is stronger than the contracting eigenvalues of
$\sigma_1, \sigma_2$, then setting
\begin{align*}
  E^s_{(w,t)}:=D\phi_t(E^s_w) \quad \& \quad
  E^{cu}_{(w,t)}:=D\phi_t(E^u_w)\oplus \RR\cdot
  X, \quad w\in\Lambda_0, t\in[0,1);
\end{align*}
we obtain a $D\phi_t$-invariant and continuous splitting
$T_\Lambda M = E^s \oplus E^{cu}$ which is partially hyperbolic.

\subsubsection{Asymptotical sectional expansion}
\label{sec:asympt-section-expan}

Since the area along any $2$-plane of $T(U\times[0,1])$ is preserved,
we get $\psi^{cu}(w)=\log\|\wedge^2 (D\phi_1(w)\mid E^c_w)^{-1}\|=0$.

Hence, given $w\in\Sigma_0^*$ whose future trajectory visits $\cC$
infinitely many times, there exist sequences $n_i<m_i<n_{i+1}$ of
iterates so that $n_0=0$ and, for $j=n_i,\dots,m_i-1$, we have
$f^jw=\phi_1^j(w)\in\cC$; and $f^jw\in M\setminus\cC$ for
$j=m_i,\dots,n_i-1$. The previous argument shows that
\begin{align}\label{eq:cancel}
  \sum\nolimits_{j=n_i}^{m_i-1}\psi^{cu}(f^jw)=0.
\end{align}
Since on $M\setminus\cC$ the time-$1$ map on $\Sigma_0^*$ coincides
with $P$, then for $f^i(w)\in M\setminus\cC$ we can assume that
$f^i(w)\in\Sigma_0^*$ and obtain
\begin{align}\label{eq:NUSE00}
  \sum\nolimits_{j=m_i}^{n_i-1}\psi^{cu}(f^jw)
  \le
  -(n_i-m_i)\log\lambda_0.
\end{align}
Thus, we can write (grossly underestimating the number of iterates
outside of $\cC$ from $0$ to $n_{i+1}$ by $i$)
\begin{align*}
  \frac1{n_{i+1}}\sum\nolimits_{j=0}^{n_{i+1}-1}\psi^{cu}(f^jw)
  \le
  \frac{i-n_i}{n_{i+1}}\log\lambda_0. 
\end{align*}
We note that close to the stable manifold $W^s(\sigma_1)$ of
$\sigma_1$ in $\cC$ the time-$1$ map takes a potentially unbounded
amount of iterates to cross $\cC$ from bottom to top. Moreover, given
$\epsilon>0$ we can find $\delta>0$ so that any $w\in\Sigma_0^*$,
which visits a small neighborhood $B_\delta(p)$ away from $p$
infinitely many times, satisfies $i/n_i\le\epsilon$ as
$i\to\infty$. Thus
$\frac{i-n_i}{n_{i+1}}\log\lambda_0<(1-\epsilon_0)\log\lambda_0$ for
all large enough $i$.

This shows that the flow satisfies the (wNU2SE) condition for
all trajectories which visit $B_\delta(p)\setminus\{p\}$ infinitely
many times or just a finite number of times. All trajectories of
$w\in\Lambda$ which do not converge to $\sigma_1$
(i.e. $w\in\Lambda\setminus W^s(\sigma_1)$) as well as all points of
the stable leaf $W^s(w)$ in $M$ satisfy this.

Since points $w$ whose trajectories do not pass through the point $p$
form a full Lebesgue measure subset, together with
Remark~\ref{rmk:nu2se-ase}, we have shown that the attractor $\Lambda$
satisfies both (wNU2SE) and (wASE).

\begin{remark}\label{rmk:wNU2SElarge}
  Moreover, this also holds for all trajectories of the ambient space
  $M$ which do not converge to the singularities.
\end{remark}
Hence, the flow admits a unique physical/SRB measure $\mu$ from
Theorem~\ref{thm:physASH} with full basin: $\leb(M\setminus B(\mu))=0$.

\begin{remark}[non-robustness]
  The properties of the flow $Y_0$ are clearly not robust: the
  perturbation $\wh{Y}(x,y):=(H'_y(x,y)-x/10, -H'_x(x,y))$ has
  trajectories sketched in Figure~\ref{fig:HamFlowPert}.
  \begin{figure}[htpb]
    \centering
    \includegraphics[width=7cm,height=6cm]{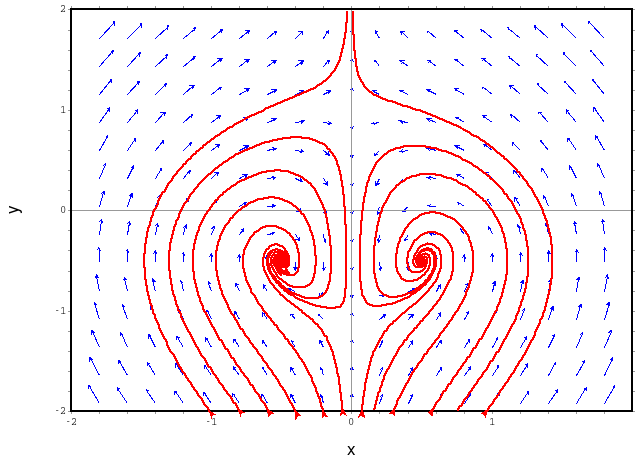}
    \caption{\label{fig:HamFlowPert} Sketch of the flow of the vector
      field $\wh{Y}$ with some trajectories, showing that trajectories
      starting close to $(0,-2)$ (that is, close to $p$ in the
      original suspension flow) will fall into sinks.}
  \end{figure}
\end{remark}

\subsection{ASH attractor with mixed sectionally
  hyperbolic equilibria}
\label{sec:multid-ash-attract}

We now modify the previous example to get sectional-hyperbolic
equilibria.  We keep the following symmetry relations from
$Y_0(x,y)=\big(Y_0^1(x,y),Y_0^2(x,y)\big)$
\begin{align}\label{eq:symmetry}
  Y_0^1(-x,y)=-Y_0^1(x,y)=Y_0^1(x,-y)
  \quad\&\quad
  Y_0^2(\pm x,\pm y)=Y_0^2(x,y)
\end{align}
by setting
$$Y_1(x,y) := \big(H^\prime_y(x,y), -2\cdot H^\prime_x(x,y)\big).$$
It is straightforward to check that
\begin{itemize}
\item $\diver (Y_1)=-H^{\prime\prime}_{yx}=y/5$; 
\item the $y$-axis $\{x=0\}$ is still invariant; and
\item the points $\sigma_1,\sigma_2$ and $\pm\zeta$ are still
  equilibria;
\item for the equlibria $\sigma_1, \sigma_2$ we obtain the same
  properties as in~\eqref{eq:DY0} with the second row multiplied by
$2$.
\end{itemize}
This provides sectional hyperbolicity: $\sigma_1$ becomes a
sectionally contracting (``Rovella-like'') singularity along
$E^{cu}_{\sigma_1}$; while $\sigma_2$ becomes a sectionally expanding
(``Lorenz-like'') singularity along $E^{cu}_{\sigma_1}$.

\begin{remark}[crossing time]
  \label{rmk:crosstime2}
  Again, for $2\le|x|\le3$ the flow $Y_1$ on $C_0$ has a vertical speed
  along the positive direction of the $y$-axis of at least
  $(2^2-1)/5=3/5$. Hence, starting from $(x,-2)$ the flow arrives at
  $(x,2)$ after a time of $t(x)\le4\cdot 5/3\le8$.
\end{remark}



\subsubsection{Poincar\'e transition map is the identity}
\label{sec:poincare-transit-map-1}

This vector field also satisfies Claim~\ref{cl:trId} since we have the
following properties of the trajectories of $Y_1$. 
\begin{lemma}[symmetric solutions]
  \label{le:symmetric}
  For any $\epsilon>0$, the trajectories
  $\big(\gamma(t)\big)_{t\in(-t_0,t_0)}$ of a vector field $Y_1$ with
  $\gamma(t)=\big(x(t),y(t)\big)$ satisfying~\eqref{eq:symmetry} are
  such that
  \begin{enumerate}[(a)]
  \item $\wt\gamma(t):=-\gamma(t), t\in(-t_0,t_0)$ is a
    trajectory of $-Y_1$; and
  \item $\wh{\gamma}(t):=(x(t),-y(t)), t\in(-t_0,t_0)$ is also a
    trajectory of $-Y_1$.
  \end{enumerate}  
\end{lemma}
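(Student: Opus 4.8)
The plan is a direct verification: I would compute the derivative of each proposed curve by the chain rule, substitute the ODE $\dot\gamma(t)=Y_1(\gamma(t))$, and then apply the symmetry relations~\eqref{eq:symmetry} componentwise so that the right-hand side becomes $-Y_1$ evaluated at the transformed point. First I would record that the field $Y_1=(H'_y,-2H'_x)$ indeed satisfies~\eqref{eq:symmetry}: since $\xi_0$ is even, $H$ is odd in $x$ and even in $y$, so $H'_y$ is odd in both $x$ and $y$ while $H'_x$ is even in both, and multiplying the second component by $-2$ does not change parities (for a general $Y_1$ as in the statement one simply takes~\eqref{eq:symmetry} as hypothesis). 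I would also note, combining the two odd symmetries of the first component, that $Y_1^1(-x,-y)=Y_1^1(x,y)$, while of course $Y_1^2(-x,-y)=Y_1^2(x,y)$.

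For part~(a), writing $\gamma(t)=(x(t),y(t))$ and $\widetilde\gamma(t)=(-x(t),-y(t))$, I would compute $\tfrac{d}{dt}(-x(t))=-Y_1^1(x(t),y(t))=-Y_1^1(-x(t),-y(t))=-Y_1^1(\widetilde\gamma(t))$ using $Y_1^1(-x,-y)=Y_1^1(x,y)$, and similarly $\tfrac{d}{dt}(-y(t))=-Y_1^2(x(t),y(t))=-Y_1^2(-x(t),-y(t))=-Y_1^2(\widetilde\gamma(t))$ using $Y_1^2(-x,-y)=Y_1^2(x,y)$. Hence $\dot{\widetilde\gamma}(t)=-Y_1(\widetilde\gamma(t))$ on $(-t_0,t_0)$, so $\widetilde\gamma$ is a trajectory of $-Y_1$.

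For part~(b), with $\widehat\gamma(t)=(x(t),-y(t))$, I would use oddness of $Y_1^1$ in $y$ to get $\tfrac{d}{dt}x(t)=Y_1^1(x(t),y(t))=-Y_1^1(x(t),-y(t))=-Y_1^1(\widehat\gamma(t))$, and evenness of $Y_1^2$ in $y$ to get $\tfrac{d}{dt}(-y(t))=-Y_1^2(x(t),y(t))=-Y_1^2(x(t),-y(t))=-Y_1^2(\widehat\gamma(t))$; thus $\dot{\widehat\gamma}(t)=-Y_1(\widehat\gamma(t))$, i.e.\ $\widehat\gamma$ is a trajectory of $-Y_1$. Since $Y_1$ is $C^\infty$, uniqueness of solutions then identifies $\widetilde\gamma$ and $\widehat\gamma$ with the orbits of $-Y_1$ through $\widetilde\gamma(0)$ and $\widehat\gamma(0)$.

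I do not expect a genuine obstacle here: the argument is a two-line chain-rule computation, and the only point demanding care is the bookkeeping of signs — in particular combining the two odd symmetries of the first component to obtain $Y_1^1(-x,-y)=Y_1^1(x,y)$ in part~(a). The intended use, extending Claim~\ref{cl:trId} to $Y_1$, is then immediate: composing~(a) and~(b) (equivalently, applying~(b) and then the analogue of~(a) to $-Y_1$, which also satisfies~\eqref{eq:symmetry}) shows that $t\mapsto S(\gamma(t))=(-x(t),y(t))$ is again a trajectory of $Y_1$, i.e.\ the flow commutes with $S:(x,y)\mapsto(-x,y)$, and this symmetry forces the $\Sigma_{-2}\to\Sigma_2$ transition map to be the identity off $(0,-2)$ exactly as for $Y_0$.
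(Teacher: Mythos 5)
Your proof of the lemma is correct and is essentially the paper's own argument: a direct chain-rule computation using the parities in~\eqref{eq:symmetry} (in particular $Y_1(-w)=Y_1(w)$ for part~(a), and oddness of $Y_1^1$ and evenness of $Y_1^2$ in $y$ for part~(b)). One caveat on your closing remark about the intended application: $S$-equivariance of the flow only forces the $\Sigma_{-2}\to\Sigma_2$ transition map to be an \emph{odd} function of $x_0$, not the identity; the paper instead uses part~(b) directly --- time-reversing the $(-Y_1)$-trajectory $\wh\gamma$ to get a $Y_1$-trajectory from $(x_1,-2)$ to $(x_0,2)$ --- and then rules out $x_0\neq x_1$ because two planar trajectories of the same flow cannot cross.
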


\begin{proof}
  Just observe that since $x'(t)=Y_1^1\big(\gamma(t)\big)$ and
  $y'(t)=Y_1^2\big(\gamma(t)\big)$ 
  \begin{align*}
    \wt\gamma^\prime(t)
    &=
      -\gamma'(t)
      =
      -Y_1\big(\gamma(t)\big)
      =
      -Y_1\big(-\gamma(t)\big)
    =
    -Y_1\big(\wt\gamma(t)\big); \qand
    \\
    \wh{\gamma}^\prime(t)
    &=
      \big(x'(t),-y'(t)\big)
      =
      \Big(Y_1^1\big(x(t),y(t)\big) ,
      -Y_1^2\big(x(t),y(t)\big)\Big)
    \\
    &=
      \Big( - Y_1^1\big(x(t),-y(t)\big) ,
      -Y_1^2\big(x(t),-y(t)\big)\Big)
      =
      -Y_1(\wh\gamma(t));
  \end{align*}
  for each $-t_0<t<t_0$.
\end{proof}

To prove the claim, we note that from Lemma~\ref{le:symmetric}, for
each trajectory $\gamma(t)$ starting at $(x_0,-2)$ with $x_0\neq0$ and
crossing $\cC$ to a point $(x_1,2)$, there corresponds a trajectory
$\wh\gamma(t)$ of $-Y_1$, which starts at $(x_0,2)$ and crosses to the
points $(x_1,-2)$; see Figure~\ref{fig:symmetry}.

\begin{figure}
  \includegraphics[height=5cm]{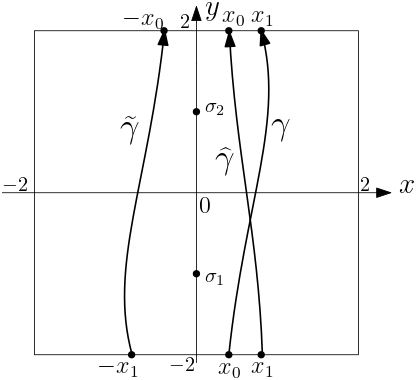}
  \caption{\label{fig:symmetry} Sketch of the trajectories $\gamma$,
    $\wt\gamma$ and $\wh\gamma$ of $Y_1$ if $0<x_0<x_1$.}
\end{figure}

We claim that $x_0=x_1$.  Arguing by contradiction, if $0<x_0<x_1$, we
have a pair of trajectories of a flow starting at $(x_0,-2)$ and
$(x_1,-2)$; and crossing to the points $(x_1,2)$ and $(x_0,2)$. Since
the order was exchanged, there must be an intersection of the
trajectories. This contradiction proves the claim.

Property (a) from Lemma~\ref{le:symmetric} ensures that the same
argument holds for $x_0<0$.
Since all trajectories starting at $(x_0,-2)$ with $x_0\neq0$ cross to
$(x_0,2)$, we have proved Claim~\ref{cl:trId}.

\begin{claim}[heteroclinic connection]
  We have the heteroclinic connection $W^u(\sigma_1)=W^s(\sigma_2)$.
\end{claim}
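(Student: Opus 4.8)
The plan is to exhibit the heteroclinic orbits explicitly as the two arcs of an invariant ellipse, which will simultaneously describe $W^u(\sigma_1)$ and $W^s(\sigma_2)$. Recall that in the coordinates where $|x|\le 2$ one has $Y_1(x,y)=\big(-xy/5,\,(3x^2+y^2-1)/5\big)$, that $\sigma_1=(0,-1)$ and $\sigma_2=(0,1)$, and (from~\eqref{eq:DY0} with the second row doubled) that $DY_1(\sigma_1)=\diag(1/5,-2/5)$ and $DY_1(\sigma_2)=\diag(-1/5,2/5)$. In particular the horizontal $x$-axis direction is the unstable eigenline at $\sigma_1$ and the stable eigenline at $\sigma_2$; the $\DD$-directions are uniformly (and more strongly) contracting at both points, so the relevant invariant manifolds lie in the invariant centre-unstable surface and we may argue in the $(x,y)$-picture.

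The crux is a short computation. Setting $g(x,y):=3x^2+2y^2$, along any trajectory of $Y_1$ one has
\[
\dot g=\langle\nabla g,Y_1\rangle=6x\cdot\Big(-\frac{xy}{5}\Big)+4y\cdot\frac{3x^2+y^2-1}{5}=\frac{2y}{5}\,(g-2).
\]
Hence the ellipse $\mathcal E:=\{3x^2+2y^2=2\}$ is invariant under the flow of $Y_1$. It contains $\sigma_1$ and $\sigma_2$, and implicit differentiation ($6x+4yy'=0$) shows it is tangent to the $x$-axis at each of them, i.e.\ tangent to the unstable eigenline of $\sigma_1$ and to the stable eigenline of $\sigma_2$. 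Next I would fix the orientation of the flow along $\mathcal E$: there $3x^2=2-2y^2$, so $Y_1^2=(3x^2+y^2-1)/5=(1-y^2)/5\ge0$, with equality only at $y=\pm1$ (the two equilibria). Therefore on each of the two components of $\mathcal E\setminus\{\sigma_1,\sigma_2\}$ — the right arc $\{x>0\}$ and the left arc $\{x<0\}$ — the height $y$ is strictly increasing along the flow, so each arc is a single orbit that is backward-asymptotic to $\sigma_1$ and forward-asymptotic to $\sigma_2$. (The left arc can also be obtained from the right one via the $S$-equivariance $S(x,y)=(-x,y)$ of $Y_1$ coming from~\eqref{eq:symmetry}, which fixes both $\sigma_i$.)

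Finally I would identify these arcs with the invariant manifolds. Inside the invariant centre-unstable surface, $W^u(\sigma_1)$ is a one-dimensional immersed curve tangent at $\sigma_1$ to the $x$-axis, hence it consists precisely of $\sigma_1$ together with the two orbits leaving it — which, by the previous step, are exactly the two arcs of $\mathcal E$. Thus $W^u(\sigma_1)=\mathcal E$ and every nonstationary orbit of $W^u(\sigma_1)$ converges to $\sigma_2$; the symmetric argument at $\sigma_2$ gives $W^s(\sigma_2)=\mathcal E$, so $W^u(\sigma_1)=W^s(\sigma_2)$ is the (punctured) ellipse, as claimed. The only delicate point is this last identification — ensuring $W^u(\sigma_1)$ neither leaves the invariant surface nor contains orbits beyond the two ellipse arcs — which I would settle using the uniqueness of the one-dimensional unstable manifold of a hyperbolic saddle together with the fact that $\mathcal E$ is exactly the set of orbits tangent at $\sigma_1$ to its unstable eigenline.
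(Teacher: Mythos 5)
Your proof is correct, but it takes a genuinely different route from the paper. The paper deduces the connection indirectly: it combines Claim~\ref{cl:trId} (every point $(x_0,-2)$, $x_0\neq0$, crosses the cylinder to $(x_0,2)$) with the invariance of the $y$-axis and the reflection/time-reversal symmetry of Lemma~\ref{le:symmetric}, arguing that if a branch of $W^u(\sigma_1)$ escaped through $\Sigma_2$ at some $(x_1,2)$ with $x_1\neq0$, the symmetric statement for $W^s(\sigma_2)$ would contradict the identity transition map. You instead exhibit the connection explicitly: the computation $\dot g=\tfrac{2y}{5}(g-2)$ for $g=3x^2+2y^2$ is correct (I verified it against $Y_1(x,y)=(-xy/5,(3x^2+y^2-1)/5)$), the ellipse $\{g=2\}$ lies entirely in the region $|x|\le2$ where that formula is valid, it is tangent to the correct eigenlines at $\sigma_1,\sigma_2$, and the sign of $\dot y=(1-y^2)/5$ on it pins down the two arcs as orbits running from $\sigma_1$ to $\sigma_2$; the identification with $W^u(\sigma_1)$ and $W^s(\sigma_2)$ via uniqueness of the one-dimensional invariant manifolds of a planar saddle is standard and closes the argument. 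Your version is more self-contained and constructive (it does not rely on Claim~\ref{cl:trId}, and in fact gives an independent sanity check on Figure~\ref{fig:symmetry}), at the cost of being tied to the specific field $Y_1$ through the ad hoc first-integral-type function $g$; the paper's symmetry argument is softer but transfers with no extra work to the symmetrized higher-dimensional fields $Y_3,Y_4$ built later. One shared caveat, which you handle explicitly and the paper leaves implicit: in the full ambient manifold $W^u(\sigma_1)$ is one-dimensional while $W^s(\sigma_2)$ is not, so the stated equality must be read inside the invariant centre-unstable surface (equivalently, as $W^u(\sigma_1)\subset W^s(\sigma_2)$), exactly as you note.
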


Indeed, since all points $(x_0,-2)$ with $x_0\neq0$ cross to a point
$(x_0,2)$, then we keep the heteroclinic connection --- for otherwise,
some trajectory in $W^u(\sigma_1)$ would cross to a point $(x_1,2)$
for some $x_1\neq 0$ (since the $y$-axis
is still invariant) and this contradicts the symmetry of solutions.

\subsubsection{Asymptotically sectional expansion}
\label{sec:asympt-section-expan-1}

Considering the flow $G_1$ obtained from $Y_1$ as
in~\eqref{eq:attached}, the symmetry of the flow on $\cC$ enables us
to use similar arguments as in
Subsection~\ref{sec:asympt-section-expan}: given $w\in\Sigma_0^*$
whose trajectory crosses $\cC$ infinitely many times, we consider the
sequence $n_i<m_i<n_{i+1}$ of iterates bounding visits of the
trajectory of $w$ in $\cC$; and note that the regions
\begin{align*}
  \{\wh{w} \in \cC: \psi^{cu}(\wh{w})=\diver(Y_1)(\wh{w})>0\}
  \qand
  \{\wh{w} \in \cC: \psi^{cu}(\wh{w})=\diver(Y_1)(\wh{w})<0\}
\end{align*}
are symmetric with the same size: they correspond to the upper half
($y>0$) and lower half ($y<0$) of the cylinder $C_0$.  Both are
traversed by each trajectory through $\cC$ in a symmetric way using
the same number of iterates modulo a finite difference. Hence, we can
write~\eqref{eq:cancel}. We also keep~\eqref{eq:NUSE00}.  So (wNU2SE)
and (wASH) follow. In addition, Remark~\ref{rmk:wNU2SElarge} also
holds.

From Lemma~\ref{le:symmetric} we have that the attracting set admits
dense trajectories, and so we have an wASH attractor with an unique
physical measure from Theorem~\ref{thm:physASH}. 

\subsection{Higher co-dimensional ASH attractor}
\label{sec:multidimesional-ash}

Now we restart with $k=\ell+1$ the previous suspension flow
construction, for any fixed $\ell\ge1$, and adapt the two-dimensional
vector field $Y_1$ by considering the vector field $Y_2$ in the
cylinder $C_1=[-3,3]\times B^\ell\times[-2,2]$, where $\|y\|_2$ is the
Euclidean norm; $B^\ell=\{y\in\RR^\ell:\|y\|_2\le 3\}$ is the closed
unit ball with radius $3$ centered at the origin, and $Y_2$ is given
by
\begin{align}\label{eq:Y2}
  Y_2(x,y,z):=(H'_z(x,z),\omega\cdot\xi_1(\|y\|_2^2)\cdot y,-2H'_x(x,z)),
  \quad (x,y,z)\in C_1
\end{align}
for some fixed $\omega>0$. Here, $\xi_1:\RR\to[0,1]$ is a smooth bump
function such that $\xi_1\mid_{\RR^+}$ is decreasing;
$\xi_1(-t)=\xi_0(t)$ for all $t\in\RR$; $\xi_1\mid_{[-4,4]}\equiv1$
and $\xi_1\mid_{\RR\setminus[-9,9]}\equiv0$.

\begin{remark}[explicit solution and crossing time]
  \label{rmk:crosstime3}
  We can explicitly solve for $y$ with initial condition
  $y_0\in B^\ell$ with $\|y\|_2<2$: $y(t)=y_0e^{\omega t}$ for
  $0\le t\le \omega^{-1}\log(2/\|y_0\|_2)$.

  Since the $x,z$-components of $Y_2$ coincide with the components of
  $Y_1$, the crossing time of the flow from $\Sigma_{-2}$ to
  $\Sigma_2$ is again at most $8$ for $2\le |x|\le3$.
\end{remark}

\subsubsection{(Sectional-)Hyperbolicity of equilibria}
\label{sec:section-hyperb-equil-1}

It is easy to see that the equilibria are
$\sigma_i=(0,0,(-1)^i), i=1,2$ and $\zeta=(\sqrt3/3,0,0)$; where
$\zeta$ is non-hyperbolic, $\sigma_{1,2}$ are both hyperbolic of
saddle-type, with $\sigma_2$ generalized Lorenz-like and $\sigma_1$
generalized Rovella-like.

Moreover, we have for $w=\sigma_i, i=1,2$
\begin{align}
  \label{eq:cuJac}
  \psi^{cu}(w)
  =
    (-1)^{i+1}/5 \qand
    \log J^{cu}f(w)
  =
    \omega + (-1)^{i+1}/5.
\end{align}

\subsubsection{Sectional hyperbolicity along $C_1$}
\label{sec:section-hyperb-along}

We can write, using the product structure of $C_1$, for $w\in C_1$,
$w=(x,y,z)$ so that $\varrho^2=x^2+\|y\|_2^2\le4$
\begin{align*}
  DY_2(x,y,z)=
  \begin{pmatrix}
    -z/5 & 0 & -x/5
    \\
    0 & \omega & 0
    \\
    6x/5 & 0 & 2z/5
  \end{pmatrix};
\end{align*}
and so the subbundles $E^1_w:=\RR\times 0^\ell\times\RR$ and
$E^2_w=0\times\RR^\ell\times 0$ are $D\phi_t$-invariant under the flow
$\phi_t$ of $Y_2$ inside the subcylinder $\varrho\le4$. While $w$ and
$\phi_{[0,t]}(w)$ are in this subcylinder, for some $t>0$, we have the
following domination property
\begin{align*}
  \|D\phi_t\mid E^1_w\|
  \le
  e^t
  <
  e^{\omega t }
  =
  \|\big(D\phi_t\mid E^2_w)^{-1}\|;
\end{align*}
as long as $\omega>1$.
This ensures that the least expansion along any $2$-subspace
by $D\phi_1\mid E^c_w$ at $w=(x,y,z)\in C_1$ is achieved along the
$E^1_w$-subbundle, that is
\begin{align*}
  \big\|\wedge^2\big( D\phi_1\mid E^c_w \big)^{-1}\big\|
  =
  \big\|\wedge^2\big( D\phi_1\mid E^1_w \big)^{-1}\big\|
  =
  \big\|\wedge^2\big(D\wh{\phi}_1\mid E^c_{(x,z)}\big)^{-1}\big\|;
\end{align*}
where $(\wh{\phi}_t)_{t\in\RR}$ is the flow of the vector field $Y_1$
from the previous subsection.

The symmetry of the trajectories in the $(x,z)$ variables together
with the above choice of $\omega$, ensures that, for each trajectory
crossing $C_1$, the portion of the trajectory covering the region
$z<0$ contributes to the sum~\eqref{eq:cancel} by the same amount, but
of opposite sign, as the portion of the same trajectory covering the
region $z>0$.  Hence, we reobtain~\eqref{eq:cancel}.

  \subsubsection{Asymptotical sectional expansion}
  \label{sec:asympt-section-expan-6}
  
  We observe that we do not necessarily have the Poincar\'e map $P$
  coinciding with the original expanding map $g$, since now we do not
  have symmetry on the $y$ variable, although the transition map from
  $(x,y,-2)$ to $(x,\bar y,2)$ keeps the $x$-variable.

  By construction, the transition map expands the $y$-variable close
  to $0$, but admits a contraction away from $0$ due to the use of the
  bump function in the definition of $Y_2$. Nevertheless, by
  Remark~\ref{rmk:crosstime3} the crossing time of the possible
  contracting region is at most $2$, the contraction rate is bounded;
  and we may assume the value of $\lambda_0>1$ large enough so that
  the transition map of the flow of the vector field $G_3$, obtained
  from $Y_3$ by the same procedure as~\eqref{eq:attached}, is still
  uniformly expanding.  Therefore, we also keep~\eqref{eq:NUSE00} with
  $\wh{\lambda_0}>1$ in the place of $\lambda_0$.

  Thus, the same argument from the previous
  Subsections~\ref{sec:asympt-section-expan}
  and~\ref{sec:asympt-section-expan-1} provides the wNU2SE property,
  and consequently wASH after Remark~\ref{rmk:nu2se-ase}, on all
  trajectories not converging to a singularity, as in
  Remark~\ref{rmk:wNU2SElarge}.
  
  Thus, $\Lambda$ becomes an wASH attractor with $d_{cu}=\ell+2$ for
  any fixed positive integer $\ell \ge1$.

  Finally, we from~\eqref{eq:cuJac} we have condition (A) of
  Theorem~\ref{mthm:hdASH}, and so we can ensure existence and
  uniqueness of a physical/SRB measure for this flow. 

\section{Construction of non-uniformly sectional expanding
   attractors}
\label{sec:new-examples-non}

We extend symmetrically the vector fields $Y_0, Y_1$ to
three-dimensional versions first.

\subsection{Higher co-dimensional NU2SE with non-sectional hyperbolic
  equilibria}
\label{sec:multid-ash-}

We now assume that $k=2$ and rotate the setup of Figure~\ref{fig:H0}
around the vertical axis: we set for
$(\varrho,\theta,z)\in[0,3]\times[0,2\pi]\times[-2,2]$
\begin{align*}
  Y_3(\varrho \cos\theta, \varrho \sin\theta, z) :=
  H^\prime_y(\varrho,z)\cdot(\cos\theta,\sin\theta,0)-
  H^\prime_x(\varrho,z)\cdot X;
\end{align*}
the corresponding symmetrized vector field from the plane hamiltonian
vector field $Y_0$.

\begin{remark}[consequences of symmetry]
  \label{rmk:symmetry}
  The symmetry ensures that $Y_3$ also satisfies
  Claim~\ref{cl:trId}. Hence, the flow $\phi_t$ of $G_3$ induces a
  transition map $L$ from $\Sigma_0^*=(N\setminus\{p\})\times\{0\}$ to
  $\Sigma_1=N\times\{1\}$ \emph{which is the identity}, as in
  Subsection~\ref{sec:poincare-transit-map}.

  Moreover, every vertical plane containing the $z$-axis, i.e, with
  equation $ax+by=0$ for any pair $(a,b)\neq(0,0)$, is preserved by
  the flow along with its area.
\end{remark}

\subsubsection{Hyperbolic and non-sectional hyperbolic equilibria}
\label{sec:hyperb-non-section}

We can write more explicitly for $\varrho\le2$, since
$\varrho^2=x^2+y^2$ 
\begin{align*}
  Y_3(x,y,z)
  &=
    -\frac{\varrho z}5\cdot\frac{(x,y,0)}{\sqrt{x^2+y^2}}
    +\left(0,0,\frac{3\varrho^2+z^2-1}5\right)
    =
    \left(-\frac{xz}5,-\frac{yz}5,\frac{3x^2+3y^2+z^2-1}{10}\right)
\end{align*}
and it is now easy to calculate
\begin{align*}
  DY_3(x,y,z)
  =
  \begin{pmatrix}
    -z/5 & 0 & -x/5
    \\
    0 & -z/5 & -y/5
    \\
    3x/5 & 3y/5 & z/5
  \end{pmatrix}
                  \qand
                  \diver(Y_3)\equiv -z/5.
\end{align*}
It is easy to see that equilibria are given by the pair
$\sigma_i=(0,0,(-1)^i), i=1,2$, of hyperbolic saddles which are not
sectionally hyperbolic; and
$\zeta(\alpha):=(\sqrt3/3)\cdot(\cos\alpha,\sin\alpha,0),
\alpha\in[0,2\pi)$, a circle of elliptical fixed points.

We attach $Y_3$ to the suspension flow $X$ as in~\eqref{eq:attached}
obtaining a $(k+3)$-dimensional flow $G_3$ (recall that here $k=2$ and
the stable direction is two-dimensional).



\subsubsection{Asymptotic sectional expansion}
\label{sec:asympt-section-expan-3}

From Remark~\ref{rmk:symmetry} at any point $w\in C_0$ there exists a
$2$-plane whose area is preserved by $D\phi_t(w)=Df(w)$.

For a point $w\in(U\setminus\{p\})\times\{0\}$ close to $p$, the time
$\tau(w)$ needed to cross $\cC$ can be estimated as
$\tau(w)\le C\cdot|\log d(w,p)|$ for some constant $C>0$, since $p$
belongs to the stable manifold of the hyperbolic saddle equilibria
$\sigma_1$. This ensures that $\tau$ is $\leb$-integrable.

The exterior product $\|\wedge^2\big(D\phi_t\mid E^c_w\big)^{-1}\|$ is
bounded above by $\|D\phi_t\|^2$, and from the Linear Variational
Equation and the Gronwall's Inequality $\|D\phi_t(w_0)\|\le e^{t \|DY_3\|}$,
where $\|DY_3\|=\sup_{w\in \cC}\|DG_3(w)\|$ for any $w_0\in\cC$. Hence
we get
\begin{align}\label{eq:highest}
  \|DY_3\|\tau(w)
  \ge
  \sum\nolimits_{i=0}^{[\tau(w)]} \psi^{cu}(f^i(w)).
\end{align}
Arguing as in Subsection~\ref{sec:asympt-section-expan}, given
$w\in\Sigma_0^*$ whose future trajectory visits $\cC$ infinitely many
times, we consider the same sequences $n_i<m_i<n_{i+1}$ of iterates
marking the crossings of $\cC$. From the above arguments, we
keep~\eqref{eq:NUSE00} and use~\eqref{eq:highest} to
replace~\eqref{eq:cancel} by the following
\begin{align}
  \label{eq:cancel1}
  \sum\nolimits_{j=n_i}^{m_i-1}\psi^{cu}(f^i(w))
  \le
  \|DY_3\|\cdot\tau\big(f^{n_i}w\big).
\end{align}
Now we can estimate
\begin{align}
  \sum\nolimits_{j=0}^{n_{i+1}-1}
  &\psi^{cu}(f^i(w))
  \le
  -\log\lambda_0\sum\nolimits_{k=0}^i(n_{k+1}-m_k)
    +\|DY_3\|\sum\nolimits_{k=0}^i(m_k-n_k)\nonumber
  \\
  &=\label{eq:estima0}
    -\log\lambda_0\cdot\#\{0\le j<n_{i+1}: \phi_1^j(w)\in
    \Sigma_0^*\setminus U\}
    +\|DY_3\|\sum\nolimits_{k=0}^i\tau\big(f^{n_k}w\big).
\end{align}
Since on $M\setminus\cC$ the time-$1$ map on $\Sigma_0^*$ coincides
with $P$, then we can recount the iterates of $\phi_1^j(w)$ through
the iterates $P^k(w)$: we set $\tau\mid_{\Sigma_0^*\setminus U}\equiv1$ and
$\ell(i):=i+\sum_{k=0}^i (n_{k+1}-m_k)$ the lap number, note that
$n_{i+1} = \sum_{k=0}^{\ell(i)}\tau(P^kw)$ and
rewrite~\eqref{eq:estima0} as
\begin{align*}
  -\log\lambda_0
  &\cdot\#\{0\le k<\ell(i): P^k(w)\in
    \Sigma_0^*\setminus U\}
  \\
  &+\|DY_3\|\sum\{\tau\big(P^kw\big): 0\le k<\ell(i): P^k(w)\in U\}
  \\
  &=
    \sum\nolimits_{j=0}^{\ell(i)}
    \Big(
    \big(-\log\lambda_0\mathbf{1}_{\Sigma_0^*\setminus U}
    +\|DY_3\|\mathbf{1}_U \big)\cdot
    \tau\Big)\circ P^j (w).
\end{align*}
Hence, for $w\in (U\setminus\{p\})\times\{0\}$ we can write
\begin{align}
  \frac1{n_{i+1}}
  &\sum_{j=0}^{n_{i+1}-1} \psi^{cu}(f^jw)
    \le \nonumber
    -\frac{\log\lambda_0}{n_{i+1}}
    \sum_{k=0}^{\ell(i)}\big(\mathbf{1}_{\Sigma^*_0\setminus U}\big)(P^kw)
    +
    \frac{\|DY_3\|}{n_{i+1}}
    \sum_{k=0}^{\ell(i)}\big(\tau\mathbf{1}_{U}\big)(P^kw)
    \\
  &=\label{eq:medias}
    -\frac{\ell(i)\log\lambda_0}{n_{i+1}}
    \frac1{\ell(i)}\sum_{k=0}^{\ell(i)}
    \big(\mathbf{1}_{\Sigma^*_0\setminus U}\big)(P^kw)
    +
    \frac{\ell(i)\|DY_3\|}{n_{i+1}}\frac1{\ell(i)}
    \sum_{k=0}^{\ell(i)}\big(\tau\mathbf{1}_{U}\big)(P^kw),
\end{align}
and by ergodicity of $\leb_\Sigma$ with respect to $P$, since
$\tau\mid_{\Sigma_0^*\setminus U}\equiv1$ and
\begin{align}\label{eq:intau}
  \frac{n_{i+1}}{\ell(i)}
  =
  \frac1{\ell(i)}\sum\nolimits_{k=0}^{\ell(i)}\tau(P^kw)
  \xrightarrow[i\to+\infty]{}\leb(\tau)=\int\tau\,d\leb_{\Sigma},
  \quad \leb_\Sigma-\text{a.e. } w\in\Sigma_0^*;
\end{align}
we arrive at
\begin{align*}
  \limsup_{i\to\infty}\frac1{n_{i+1}}
  \sum\nolimits_{j=0}^{n_{i+1}-1}\psi^{cu}(f^jw)
  \le
  -\frac{\log(\lambda_0)}{\leb(\tau)}
  (1-\leb_\Sigma(U))
  +
  \frac{\|DY_3\|}{\leb(\tau)}\leb_\Sigma(\tau\mathbf{1}_{U}).
\end{align*}
Finally, since $\tau\in L^1(\leb_\Sigma)$ and $U=B_\epsilon(p)$, we
can make $\leb_\Sigma(\tau\mathbf{1}_{U})=\int_U \tau\,d\leb_\Sigma$
as close to zero as needed.

Since trajectories eventually returning to a full
$\leb_\Sigma$-measure subset of $\Sigma_0^*$ form a full volume subset
of the ambient manifold $M$, we can thus conclude NU2SE as long as $U$
is small enough.

\subsubsection{Slow recurrence}
\label{sec:slow-recurr-equilibr}

To obtain a physical/SRB measure with full basin it is enough to
obtain slow recurrence according to Theorem~\ref{thm:discretefabv}. We
explore the invariance and ergodicity of $\leb_\Sigma$ with respect to
the Poincar\'e first return map $P$, the integrability of the
Poincar\'e first return time, together with the symmetry of the flow
on the cylinder $\cC$.

We use the equivalence between the SR condition~\eqref{eq:SR} and its
continuous version: on a positive volume subset of points, for every
$\epsilon>0$, we can find $r>0$ so that
\begin{align}
  \label{eq:SSR}
  \limsup_{T\nearrow\infty}\frac1T\int_0^T-\log d_r
  \big(\phi_t(x),\sing_\Lambda(G)\big) \, dt <\epsilon;
\end{align}
see~\cite[Theorem C]{ArSal25} for the proof of the stated equivalence.

In what follows we write
$\Delta_r(x):=-\log d_r \big(\phi_t(x),\{\sigma_1,\sigma_2\}\big)$ and
consider trajectories starting at a point $x\in\Sigma_0^*$ on a subset
with full $\leb_\Sigma$-measure, and claim that we can find a constant
$C>0$ such that for all small $r>0$
\begin{align}
  \label{eq:Lebergodic}
  \limsup_{T\nearrow\infty}\frac1T\int_0^T\Delta_r\big(\phi_t(x)\big) \, dt
  \le
  C\int_{\|u\|_2<r}\big((\log \|u\|_2)^2-(\log r)^2\big) d\lambda_2(u);
\end{align}
where $\lambda_2$ is the Lebesgue measure on
$\RR^2$. 
It is easy to see that the above expression tends to zero when
$r\to0+$, as we need.

\subsubsection*{Reduction to plane dynamics}

From Remark~\ref{rmk:symmetry}, each trajectory crossing $\cC$ is
contained in one vertical plane through the $z$-axis. We can assume,
without loss of generality, that we are dealing with a flow like $Y_0$,
whose trajectories are depicted in Figure~\ref{fig:H0}, to
estimate the value of the integral in~\eqref{eq:SSR}.

Considering $0<r\ll 1$, then trajectories outside of $\cC$ do not
contribute to the above integral --- we consider only those entering
$\cC$ through a small neighborhood $I_0=(-r,r)\times\{-2\}$ on
$\Sigma_{-2}$. We assume (without loss of generality) that from $I_0$
to the ball $B_r(\sigma_1)$ the flow is essentially tubular: starting
at $(x_0,-2)$ we will arrive at $B_r(\sigma_1)$ with the first
coordinate still equal to $x_0$. Likewise, between $B_r(\sigma_1)$ and
$B_r(\sigma_2)$ and from $B_r(\sigma_2)$ and $(x_0,2)$ we assume that
the flow is tubular.

From~\cite[Theorem 1.3]{Newhouse16} we can locally $C^1$ linearize the
flow around $\sigma_1$ in the ball $B_r(\sigma_1)$ (reducing the value
of $r>0$ if needed): there exists a $C^1$ diffeomorphism
$\zeta:B_r(\sigma_1)\to\RR^2$ so that
$\zeta(\phi_t(w))=e^{Dt}\zeta(w)$ for $w\in B_r(\sigma_1), t>0$ so
that $\phi_{[0,t]}(w)\subset B_r(0)$ and $D=\diag\{1/5,-1/5\}$.

Therefore, the distance $d(\phi_t(w),\sigma_1)$ can be estimated by
$\|e^{tD}\zeta(w)\|_2$ in the Euclidean norm, and so the integral
in~\eqref{eq:SSR} for a trajectory starting at the boundary of
$B_r(\sigma_1)$ can be calculated, writing $\zeta(w)=(x_0,r)$ with
$x_0\neq0$
\begin{align*}
  \int_0^t-\frac12\log\|e^{tD}\zeta(w)\|_2^2\,dt
  &=
  -\frac12\int_0^t\log(e^{2s/5}x_0^2+e^{-2s/5}r^2)\,ds
  \le
  -\frac12\int_0^t\log(e^{2s/5}x_0^2)\,ds
  \\
  &=
  -\int_0^t(s/5+\log |x_0|)\,ds
  =
  -t(t/10+\log |x_0|).
\end{align*}
The trajectory leaves $B_r(\sigma_1)$ before the time $t_0$ so that
$e^{t_0/5}|x_0|=r \iff t_0=5\log(r/|x_0|)$. Thus each trajectory crossing
$B_r(\sigma_1)$ contributes to the integral in~\eqref{eq:SSR} by at
most
$S=-t_0(t_0/10+\log |x_0|)=\frac52\big( (\log |x_0|)^2-(\log r)^2\big)$.

The second coordinate of $\zeta(\phi_{t_0}(w))$ at the exit from
$B_r(\sigma_1)$ is $e^{-t_0/5}r=x_0$ again. From $B_r(\sigma_1)$ to
$B_r(\sigma_2)$ we can likewise assume that the flow is tubular, and
repeat the calculation again when crossing $B_r(\sigma_2)$.

We thus obtain that at each crossing of $\cC$ starting at $(x_0,-2)$
we arrive at $(x_0,2)$ after a time $\tau(x_0,-2)$ and
\begin{align}
  \label{eq:S1}
  \int_0^{\tau(x_0,-2)}\Delta_r\big(\phi_t(w)\big)\,dt
  \le
  C\cdot (2 S) = 5 C \big( (\log |x_0|)^2-(\log r)^2\big)
\end{align}
for some constant $C>0$.

\subsubsection*{Back to the dynamics on $M$}

We now consider a trajectory starting at $\leb$-generic point
$w\in\Sigma_0^*$ and crossing $\cC$ through $I_0$ at times
$t_n<T_n<t_{n+1}$ so that $P^{k_n}w=\phi_{t_n}w\in I_0$ and
$T_n=t_n+\tau(P^{k_n}w)$, where $P^{k_i}w$ are precisely those
iterates which fall in $I_0$.  At every visit to $B_r(z)\times{0}$ on
$N\times\{0\}$ the expression $|x_0|$ in the upper bound
from~\eqref{eq:S1} means $d(P^{k_n}w,z)$.  We can estimate as follows
\begin{align*}
  \int_0^{T_n}\Delta_r\big(\phi_s(w)\big)\,dt
  &=
    \sum\nolimits_{i=1}^n\int_{t_i}^{T_i}
    \Delta_r\big(\phi_s(w)\big)\,dt
  \\
  &\le
  5C\sum\nolimits_{i=1}^{k_n}
    \big((\log d(P^iw,z))^2-(\log r)^2\big)
    \cdot\mathbf{1}_{B_r(z)}(P^iw).
\end{align*}
Thus, we can estimate the average as
\begin{align*}
  \frac1{T_n}\int_0^{T_n}\Delta_r\big(\phi_s(w)\big)\,dt
  &\le
    \frac{5C\cdot k_n}{T_n}\cdot\frac1{k_n}
    \sum_{i=1}^{k_n}
    \big((\log d(P^iw,z))^2-(\log r)^2\big)
    \cdot\mathbf{1}_{B_r(z)}(P^iw);
\end{align*}
but we also have $T_n=\sum_{i=0}^{k_n}\tau(P^iw)$ (recall the
definition of $\tau$ as the Poincar\'e time associated to the
Poincar\'e first return map) so that we
can use  the $P$-invariance and ergodicity of $\leb_\Sigma$ to get
\begin{align*}
  \limsup_{n\to\infty}
  \frac1{T_n}\int_0^{T_n}\Delta_r\big(\phi_s(w)\big)\,dt
  &\le
    \frac{5C}{\leb_\Sigma(\tau)}\int_{B_r(z)}
    \big( (\log d(w,z))^2-(\log r)^2\big)\,d\leb_\Sigma(w)
  \\
  &=
    \frac{5C}{\leb_\Sigma(\tau)}
    \int_{u\in B_r(0)\subset\RR^2}\big((\log \|u\|_2)^2-(\log
    r)^2\big)\,d\lambda_2(u),
\end{align*}
where $\lambda_2$ is the Lebesgue area measure on the Euclidean plane.

Given any strictly increasing and unbounded positive real sequence
$(s_m)_{m\ge1}$, we have the following two cases
\begin{description}
\item[$T_{n_m}<s_m<t_{n_m+1}$] we get the bound
  \begin{align*}
    \frac1{s_m}\int_0^{s_m}\Delta_r\big(\phi_t(w)\big)\,dt
    =
    \left(\frac{T_{n_m}}{s_m}\right)\cdot\frac1{T_{n_m}}
    \int_0^{T_{n_m}}\Delta_r\big(\phi_t(w)\big)\,dt
    \le
    \frac1{T_{n_m}}
    \int_0^{T_{n_m}}\Delta_r\big(\phi_t(w)\big)\,dt; 
  \end{align*}
\item[$t_{n_m}\le s_n<T_{n_m}$] we get the bound
    \begin{align*}
      \frac1{s_m}\int_0^{s_m}\Delta_r\big(\phi_t(w)\big)\,dt
    \le
    \left(\frac{T_{n_m}}{s_m}\right)\cdot\frac1{T_{n_m}}
    \int_0^{T_{n_m}}\Delta_r\big(\phi_t(w)\big)\,dt.
    \end{align*}
  \end{description}
Since $T_n=t_n+\tau(P^{k_m}w)>s_m\ge t_m$ then
\begin{align}\label{eq:quotient}
  \frac{T_{n_m}}{s_m}
  \le
  \frac{t_{n_m}+\tau(P^{k_m}w)}{t_{n_m}}
  =
  1+\frac{\tau(P^{k_m}w)}{t_{n_m}}
  =
  1+\frac{\tau(P^{k_m}w)/k_m}{\frac1{k_m}\sum_{i=0}^{k_m-1}\tau(P^iw)}.
\end{align}
For $\leb_\Sigma$-a.e. $w$, from $P$-invariance and ergodicity we have
\begin{align*}
  \frac1{k_m}\sum_{i=0}^{k_m-1}\tau(P^iw) \to \leb_\Sigma(\tau)
  \qand
  \frac{\tau(P^{k_m}w)}{k_m}\to 0;
\end{align*}
so that~\eqref{eq:quotient} tends to $1$ for large $m$.
Altogether, this shows that
\begin{align*}
  \limsup_{n\to\infty}
  \frac1{s_m}\int_0^{s_m}\Delta_r\big(\phi_t(w)\big)\,dt
  =
\limsup_{n\to\infty}
  \frac1{T_n}\int_0^{T_n}\Delta_r\big(\phi_s(w)\big)\,dt;
\end{align*}
completing the proof of~\eqref{eq:Lebergodic}.

\subsection{Higher co-dimensional NU2SE with mixed
  sectional-hyperbolic equilibria}
\label{sec:section-hyperb-equil}

We repeat the construction starting with the vector field $Y_2$
from~Subsection~\ref{sec:multid-ash-attract}, that is, we consider
\begin{align*}
  Y_4(\varrho \cos\theta, \varrho \sin\theta, z)
  :=
  H^\prime_y(\varrho,z)\cdot(\cos\theta,\sin\theta,0)-
  2\cdot H^\prime_x(\varrho,z)\cdot X.
\end{align*}
We note that the action of the flow $\phi_t$ of $Y_4$ on $2$-planes is
given by the additive compound $\wedge^{[2]}DY_4$: i.e. given that
$D\phi_t(w)$ is the solution of the Linear Variational Equation on
$\RR^3$
\begin{align*}
  Z'=DY_4(\phi_t(w))\cdot Z, \qquad Z_0=I_3:\RR^3\to\RR^3;
\end{align*}
then $\wedge^2D\phi_t(w)$ is the solution of
\begin{align*}
Z'=\wedge^{[2]}DY_4(\phi_t(w))\cdot Z, \qquad Z_0=I_3:\RR^3\to\RR^3;
\end{align*}
and we can use the following
\begin{align*}
  A =
  \begin{pmatrix} a_{11} & a_{12} & a_{13} \\ a_{21} & a_{22} &
  a_{23} \\ a_{31} & a_{32} & a_{33} \end{pmatrix}
\implies
\wedge^{[2]}A = \begin{pmatrix}
a_{11} + a_{22} & a_{23} & -a_{13} \\
a_{32} & a_{11} + a_{33} & a_{12} \\
-a_{31} & a_{21} & a_{22} + a_{33}
\end{pmatrix}                  
\end{align*}
(see e.g.~\cite{Muldowney1990} or~\cite{fiedler74,Zhang2013} for short
introductions, where $\wedge^{[2]}A$ is written $A^{[2]}$)
so that for $\varrho\le2$ we get $\diver(Y_4)\equiv 0$ and
\begin{align*}
  DY_4(x,y,z)
  =
  \begin{pmatrix}
    -z/5 & 0 & -x/5
    \\
    0 & -z/5 & -y/5
    \\
    6x/5&6y/5& 2z/5
  \end{pmatrix}                 
               \,\&\,
               \wedge^{[2]}DY_4(x,y,z)=
  \begin{pmatrix}
    -2z/5 & -y/5 & x/5
    \\
    6y/5 & z/5 &0
    \\
    -6x/5 &0& z/5
  \end{pmatrix}.
\end{align*}
Therefore, the hyperbolic equilibra $\sigma_{1,2}$ became
sectional-hyperbolic
\begin{align*}
  DY_4(0,0,\pm1)
  =
  \pm
  \begin{pmatrix}
    -1/5 & 0 & 0
    \\
    0 & -1/5 & 0
    \\
    0 & 0  & 2/5
  \end{pmatrix}                 
               \,\&\,
               \wedge^{[2]}DY_4(0,0,\pm1)=\pm
  \begin{pmatrix}
    -2/5 & 0&0
    \\
    0 & 1/5 &0
    \\
    0 &0& 1/5
  \end{pmatrix};
\end{align*}
with $\sigma_1$ generalized Rovella-like and $\sigma_2$ generalized
Lorenz-like.

\subsubsection{Asymptotical sectional-expansion}
\label{sec:asympt-section-expan-4}

From the arguments of Subsection~\ref{sec:poincare-transit-map-1},
since $Y_4$ is based on a symmetrization of $Y_2$, we recover
Claim~\ref{cl:trId} so that the Poincar\'e transition map on
$\Sigma_0^*$ is again the identity.

The upper bound from~\eqref{eq:highest} is kept with
$\|DY_4\|= \sup_{w\in \cC}\|DG_4(w)\|$ in the place of $\|DY_3\|$; and
so the same argument from Subsection~\ref{sec:asympt-section-expan-3}
shows that the flow of $G_4$ --- obtained from $X$ by attaching $Y_4$
as in~\eqref{eq:attached} --- satisfies NU2SE as long as $U$ is small
enough.

\subsubsection{Slow recurrence to equilibria}
\label{sec:slow-recurrence-}

The same argument of~\ref{sec:slow-recurr-equilibr} applies here, with
similar upper bound, to conclude the SR condition on a full volume
subset of $M$. Again, from Theorem~\ref{thm:discretefabv} we conclude
that there exists a unique physical/SRB measure whose basin covers
$\leb$-a.e. point of the ambient space.

\section{Example of $p$-sectional expansion without asymptotic
  $(p-1)$-sectional-expansion}
\label{sec:3-section-expans}

We repeat the construction on Subsections~\ref{sec:multid-ash-}
and~\ref{sec:section-hyperb-equil} ensuring that trajectories spend a
large enough time in the cylinder $\cC$. For this we attach either
$Y_i$  to the
original laminar flow as
\begin{align}\label{eq:attached1}
  \wh{G_i}(w,u):=
  \psi(u)\cdot X+(1-\psi(u))\zeta(u)\cdot Y_i(w,u), \quad (w,u)\in
  M_0, \quad i=3,4;
\end{align}
where $\psi$ is the same as in~\eqref{eq:attached}, and
$\zeta:\RR\to[1-\zeta_0,1]$ is a $C^\infty$ function so that
\begin{align*}
  \zeta\mid_{(-\infty,\epsilon]\cup[1-\epsilon,+\infty)}\equiv1 \qand
  \zeta\mid_{[2\epsilon,1-2\epsilon]}\equiv1-\zeta_0
\end{align*}
for some fixed $0\le\zeta_0<1$, reducing the speed of the vector field
inside the cylinder.

As in the previous examples, \emph{the Poincar\'e first return map of
  this vector field $P:\Sigma_0^*\to\Sigma_0$ coincides with
  $F_0\mid_{N\setminus\{p\}}$.}

\subsection{Asymptotic $3$-sectional expansion}
\label{sec:asympt-3-section}

The same symmetry and frequency arguments from
Subsection~\ref{sec:asympt-section-expan-1} shows that the vector
field $\wh{G_i}$ obtained from $Y_i$ following the attaching
procedure~\eqref{eq:attached1} is $3$-sectionally expanding, since
$\diver(Y_i)$ corresponds to the rate of change of volume along the
$3$-dimensional fiber $E^c$, for $i=3,4$.

Since the argument is valid for all trajectories in the attractor not
converging to equilibria, we obtain an \emph{asymptotic $3$-sectional
  hyperbolic attractor}.

\subsection{Absense of asymptotical ($2$-)sectional expansion}
\label{sec:asympt-section-expan-5}

\subsubsection{With non-sectional hyperbolic equilibria}
\label{sec:with-non-sectional}

For $Y_3$ we can calculate for $\varrho\le 2$, and assuming
$\zeta_0=0$ for simplicity
\begin{align}\label{eq:2additive3}
  \wedge^{[2]}DY_3(x,y,z)=
  \begin{pmatrix}
    -2z/5 & -y/5 & x/5
    \\
    3y/5 &0&0
    \\
    -3x/5 &0&0 
  \end{pmatrix}.
\end{align}
This operator has eigenvalues
\begin{align}\label{eq:eigenvalues3}
  \lambda_2=(-z-\sqrt{z^2-3\varrho^2})/5; \quad
  \lambda_1=(-z+\sqrt{z^2-3\varrho^2})/5
  \qand \lambda_0=0;
\end{align}
and respective eigenvectors 
\begin{align}\label{eq:eigenvectors3} 
  v_2
  =
  (\varrho^2, 5y\lambda_1, -5x\lambda_1);
  \quad
  v_1
  =
  (\varrho^2, 5y\lambda_2, -5x\lambda_2)
  \qand
  v_0=(0,y,x).
\end{align}
From the expressions~\eqref{eq:2additive3},~\eqref{eq:eigenvalues3}
and~\eqref{eq:eigenvectors3}, we have the following cases for $Y_3$
(recall that $\varrho^2=x^2+y^2$).
\begin{description}
\item[For $z>0$] if $z^2\ge 3\varrho^2$ we have
$\lambda_2\le -z/5 \le \lambda_1<0$; otherwise we get
$\lambda_2=\bar\lambda_1\in\CC\setminus\RR$ with
$\Re(\lambda_2)=\Re(\lambda_1)=-z/5<0$.
\end{description}
Thus, at the region
$z>0, \varrho\le2$ of $C$, \emph{there always is a $2$-plane which is
  contracted by $D\phi_t$} at a rate $\le -z/5$. Hence we can estimate
\begin{align}\label{eq:derlog}
  \sigma(x,y,z):=\frac{\partial}{\partial t}
  \left.
  \log\|\wedge^2\big(D\phi_t\mid E^c_{(x,y,z)}\big)^{-1}\|
  \right|_{t=0}
  \ge
  \frac{z}5, \quad z>0.
\end{align}
\begin{description}
\item[For $z\le0$] from the preservation of area along the plane
  orthogonal do $v_0$, we obtain
\begin{align}\label{eq:derlog0}
  \sigma(x,y,z)
  \ge
  0, \quad z\le0.
\end{align}
\end{description}

\subsubsection{With mixed type sectional-hyperbolic equilibria}
\label{sec:with-mixed-type}

For $Y_4$, from Subsection~\ref{sec:section-hyperb-equil}, we
calculate the eigenvalues of $\wedge^{[2]}DY_4(x,y,z)$ as follows
\begin{align*}
  \lambda_2=(-z-\sqrt{9z^2-24\varrho^2})/10; \quad
  \lambda_1=(-z+\sqrt{9z^2-24\varrho^2})/10
  \qand \lambda_0=z/5;
\end{align*}
with the respective eigenvectors
\begin{align*}
    v_2
  =
  (2\varrho^2, y(10\lambda_1-2z), x(3z-10\lambda_1);
  \quad
  v_1
  =
  (2\varrho^2, y(10\lambda_2-2z), x( 2z-10\lambda_2))
\end{align*}
and $v_0=(0,y,x)$. Analogously, we split in two cases.
\begin{description}
\item[For $z>0$] if $z^2\ge (8/3) \varrho^2$ we have
$\lambda_2\le -z/10 < \lambda_1<2z$; otherwise we get
$\lambda_2=\bar\lambda_1\in\CC\setminus\RR$ with
$\Re(\lambda_2)=\Re(\lambda_1)=-z/10<0$.
\end{description}
 Thus, at the region
$z>0, \varrho\le2$ of $C$, \emph{there always is a $2$-plane which is
  contracted by $D\phi_t$} at a rate $\le -z/10$. Hence we can estimate
\begin{align}\label{eq:derlog1}
  \varsigma(x,y,z):=\frac{\partial}{\partial t}
  \left. \psi^{cu}(x,y,z)
  \right|_{t=0}
  \ge
  \frac{z}{10}, \quad z>0.
\end{align}
\begin{description}
\item[For $z\le0$] since $\lambda_0=z/5$, we again have a contracted
  $2$-plane at a rate $z/5$; thus
\begin{align}\label{eq:derlog2}
  \varsigma(x,y,z)\ge-\frac{z}5, \quad z\le0.
\end{align}
\end{description}

\subsubsection{Lower bound for sectional-expansion}
\label{sec:lower-bound-section}

Finally,
from~\eqref{eq:derlog},~\eqref{eq:derlog0},~\eqref{eq:derlog1}
and~\eqref{eq:derlog2} we have
$\log\|\wedge^2\big(D\phi_t\mid E^c_w\big)^{-1}\| \ge \int_0^t
\varsigma(\phi_s(w))\,ds$ and so for the vector field $Z$ equal to
either $Y_3$ or $Y_4$ we can write
\begin{align}
  \|DZ\|\tau(w)
  \ge
  \sum\nolimits_{i=0}^{[\tau(w)]}
  &\psi^{cu}(f^iw)
  \ge \nonumber
  \log\|\wedge^2\big(D\phi_{\tau(w)}\mid E^c_w\big)^{-1}\|
  \\
  &\ge \label{eq:lowest}
    \int_0^{[\tau(w)]} \varsigma(\phi_s(w))\,ds
    \ge
    \int_{[\tau(w)/2]}^{[\tau(w)]}\frac{\pi_3(\phi_s(w))}5 \,ds
    = a(w)\tau(w);
\end{align}
where $a(w)>0$ and $\pi_3$ is the projection on the third coodinate in
$C_0$; and we have used the symmetry of the flow inside the cylinder
(so that $\varsigma(\phi_s(w))>0 \iff s>\tau(w)/2$ for $Y_3$ and we use a
loose lower bound for $Y_4$).

We can use~\eqref{eq:lowest} to replace~\eqref{eq:cancel} by the
following
\begin{align}
  \label{eq:cancel2}
  \|DZ\|\cdot\tau\big(f^{n_i}w\big)
  \ge
  \sum\nolimits_{j=n_i}^{m_i-1}\psi^{cu}(f^jw)
  \ge
  a\big(f^{n_i}w\big)\cdot\tau\big(f^{n_i}w\big).
\end{align}
We can also replace~\eqref{eq:NUSE00} by
\begin{align}\label{eq:NUSE1}
  \sum\nolimits_{j=m_i}^{n_i-1} \psi^{cu}(f^jw)
  \ge
  -(n_i-m_i)\log\lambda_1.
\end{align}

\subsubsection{Slowing the flow on $\cC$}
\label{sec:slowing-flow-cc}

Choosing $0<\zeta_0<1$ in the definition of $Z$, we change the
bound~\eqref{eq:lowest} on the rate of sectional
expansion/contraction as follows.

Since $f:[\tau(w)/2,\tau(w)]\to[0,2], t\mapsto \pi_3(\phi_t(w))/5$ is
smooth, bijective and strictly monotonous, we can use its inverse
$h:[0,2]\to[\tau(w)/2,\tau(w)]$ to change variables
\begin{align*}
  \int_{h(0)}^{h(2)}f
  =
  \int_0^2 (f\circ h)\cdot h'
  =
  \int_0^2\frac{s\,ds}{f'(h(s))}
  =
  \int_0^2\frac{ds}{\|Y_\ell(\phi_{h(s)}(w))\|};
\end{align*}
where we used that
\begin{align*}
  f'(t)
  =
  \frac15 D\pi_3(\phi_t(w))\cdot \partial_t(\phi_t(w))
  =
  \frac15\pi_3(\phi_t(w))\cdot\|Y_\ell(\phi_t(w))\|
  =
  f(t)\cdot\|Y_\ell(\phi_t(w))\|.
\end{align*}
Hence, the value of $a(w)$, obtained for $\zeta_0=0$, is multiplied by
$(1-\zeta_0)^{-1}$ when we pass to some $0<\zeta_0<1$.

Thus, in~\eqref{eq:cancel2}, increasing the value of $\zeta_0\in(0,1)$
not only increases the value of $\tau(w)$ --- due to reduced speed of
the flow on $\cC$ --- but introduces the factor $(1-\zeta_0)^{-1}$ in
the lower bound.

\subsubsection{Using the integrability of $\tau$ and ergodicity of $P$}
\label{sec:using-ergodicity-p}

Given $w\in\Sigma_0^*$ close to $p$ whose future trajectory visits
$\cC$ infinitely many times, there exist sequences $n_i<m_i<n_{i+1}$
of iterates so that $n_0=0$ and, for $j=n_i,\dots,m_i-1$, we have
$\phi_1^j(w)\in\cC$; and $\phi_1^j(w)\in M\setminus\cC$ for
$j=m_i,\dots,n_i-1$.

We can now estimate using the previous lower bounds, similarly to
Subsection~\ref{sec:asympt-section-expan-3}, for any
$q=0,\dots,n_{i+1}-m_i-1$ the sum $ \sum\nolimits_{j=0}^{m_i+q} 
 \psi^{cu}(f^jw)$ is bounded from below by
\begin{align}
  -\log\lambda_1
  &\cdot\nonumber
    \#\{0\le j<m_i+q: \phi_1^j(w)\in
    \Sigma_0^*\setminus U\}
    +\sum\nolimits_{k=0}^ia\big(\phi_1^{n_k}w\big)\tau\big(\phi_1^{n_k}w\big)
      \\
  &=\label{eq:estima01}
    \sum\nolimits_{j=0}^{\ell(i,q)}
    \Big(
    \big(-\log\lambda_1\mathbf{1}_{\Sigma_0^*\setminus U}
    +a\cdot\mathbf{1}_U \big)\cdot\tau\Big)\circ P^j (w).
\end{align}
where $\ell(i,q)=i+\sum_{k=0}^{i-1}(n_{k+1}-m_k)+q$.

From the previous
estimates~\eqref{eq:derlog},~\eqref{eq:derlog0},~\eqref{eq:derlog1}
and~\eqref{eq:derlog2}, we can use $0$ as a lower bound for the
summands corresponding to the iterates $j=n_{i+1}, \ldots, m_{i+1}-1$,
so that~\eqref{eq:estima01} with $q=n_{i+1}-m_i-1$ still bounds
$  \sum\nolimits_{j=0}^{n_{i+1}+q} \psi^{cu}(f^jw)$
from below, for $q=0, \dots, m_{i+1}-n_{i+1}-1$.

Thus, for any increasing integer sequence $\omega_n\nearrow\infty$,
for each $n\ge1$ there exists $i=i_n$ so that $m_i\le \omega_i
<m_{i+1}$ and we can estimate
\begin{align}\label{eq:rhsav}
  \frac1{\omega_n}\sum_{j=0}^{\omega_n-1}
  &\psi^{cu}(f^jw)
    \ge
    \left(\frac{n_{i+1}}{\omega_n}\right)\cdot\frac1{n_{i+1}}
    \sum_{j=0}^{n_{i+1}-1}
  \log\|\wedge^2 \big(D\phi_1\mid E^c_{\phi_1^jw}\big)^{-1}\|,
\end{align}
since the difference between the two sums are the negative summands
$-\log\lambda_1$ if $\omega_n<n_{i+1}-1$; or non-negative summands
otherwise. As for the quotient, either $n_{i+1}>\omega_n$ or for
$\leb_\Sigma$-a.e. $w$
\begin{align*}
  \frac{n_{i+1}}{\omega_n}
  \ge
  \frac{n_{i+1}}{n_{i+1}+\tau(P^{\ell(i)+1}w)}
  =
  \left(
  1+
  \frac{\tau(P^{\ell(i)+1}w)/\ell(i)}
  {\frac1{\ell(i)}\sum_{k=0}^{\ell(i)}\tau(P^kw)} 
  \right)^{-1}
  \xrightarrow[n\to+\infty]{}1,
\end{align*}
since we have $\frac1j\tau\circ P^j\to0, \leb_\Sigma$-a.e. together
with~\eqref{eq:intau}. 

Therefore, it is enough to consider limits of the right hand side
average in~\eqref{eq:rhsav}, which from~\eqref{eq:estima01} is bounded
from below by
\begin{align*}
  \lim_{i\to+\infty}
  &\left(
  -\frac{\ell(i)\log\lambda_1}{n_{i+1}}
    \frac1{\ell(i)}\sum_{k=0}^{\ell(i)}
    \big(\mathbf{1}_{\Sigma^*_0\setminus U}\big)(P^kw)
    +
    \frac{\ell(i)}{n_{i+1}}\frac1{\ell(i)}
  \sum_{k=0}^{\ell(i)}\big(a\cdot\tau\cdot\mathbf{1}_{U}\big)(P^kw)
    \right)
  \\
  &=
    \frac1\eta\left(
    -\log\lambda_1(1-\leb_\Sigma(U))
    +\int_U a\cdot\tau\,d\leb_\Sigma
    \right),
\end{align*}
where $\ell(i)=\ell(i,n_{i+1}-m_i-1)$ (as in~\eqref{eq:medias}) and we
have used again~\eqref{eq:intau}.

Finally, from Subsection~\ref{sec:slowing-flow-cc}, we can choose
$0<\zeta_0<1$ so that the last expression between parenthesis becomes
\begin{align*}
    -\log\lambda_1(1-\leb_\Sigma(U))
  +\frac1{1-\zeta_0}\int_U a\cdot\tau\,d\leb_\Sigma
  >0
\end{align*}
This shows that the flow of $Z$ does not satisfy NU2SE, as claimed.

\subsection{Higher co-dimensional versions}
\label{sec:higher-co-dimens}

We can extend the previous construction to higher co-dimensions
similarly to Subsection~\ref{sec:multidimesional-ash}. We restart with
consider $k=\ell+2$ for any given fixed $\ell\ge1$, and extend the
vector fields $Y_i, i=3,4$ as follows
\begin{align*}
  \wh{Y_3}(\varrho \cos\theta, \varrho \sin\theta,w,z) :=
  H^\prime_y(\varrho,z)\cdot(\cos\theta,\sin\theta,0,0)-
  H^\prime_x(\varrho,z)\cdot X;
\end{align*}
for $\varrho\in[-3,3]\times B^\ell\times[-2,2]$, where $B^\ell$ is the
closed unit ball with radius $3$ centered at the origin, similar
to~\eqref{eq:Y2}; and analogously
\begin{align*}
  \wh{Y_4}(\varrho \cos\theta, \varrho \sin\theta,w,z) :=
  H^\prime_y(\varrho,z)\cdot(\cos\theta,\sin\theta,0,0)-
  2\cdot H^\prime_x(\varrho,z)\cdot X.
\end{align*}
It is easy to see that $\diver (\wh{Y_i})=\diver(Y_i)$ and
\begin{align*}
  \|\wedge^{k-1}\big( \wh{\phi}_t\mid_{E^{cu}_w}\big)^{-1}\|
  =
  \|\wedge^{k-2}\big( \wh{\phi}_t\mid_{E^{cu}_w}\big)^{-1}\|
  =
  \cdots
  =
  \|\wedge^2\big( \phi_t\mid_{E^{cu}_w}\big)^{-1}\|;
\end{align*}
where $\wh{\phi}_t$ is the flow of $\wh{Y_i}, i=3,4$, and $\phi_t$ is
the flow of $Y_i, i=3,4$.

We can proceed with the same argument as in the previous subsections
to obtain an attractor $\Lambda$ on the manifold $M$ with dimension
$k+2=\ell+3$ and \emph{asymptotic $k$-sectional hyperbolic}, together
with \emph{absence of non-uniform $p$-sectional-expansion} for all
$2\le p\le k-1$, on a full volume subset of the ambient manifold.

\bibliographystyle{abbrv}
\bibliography{../bibliobase/bibliography}


\end{document}